\numberwithin{equation}{section}
\theoremstyle{definition}
\newtheorem{theorem}{Theorem}[section]
\newtheorem{proposition}{Proposition}[section]
\newtheorem{lemma}{Lemma}[section]
\newtheorem{corollary}{Corollary}[section]
\newtheorem{definition}{Definition}[section]
\newtheorem{remark}{Remark}[section]
\newcommand{\RR}{\mathbb{R}}
\newcommand{\CC}{\mathbb{C}}
\newcommand{\Aa}{\mathcal{A}}
\newcommand{\Cc}{\mathcal{C}}
\newcommand{\Ee}{\mathcal{E}}
\newcommand{\Ff}{\mathcal{F}}
\newcommand{\Gg}{\mathcal{G}}
\newcommand{\Hh}{\mathcal{H}}
\newcommand{\Ii}{\mathcal{I}}
\newcommand{\Jj}{\mathcal{J}}
\newcommand{\Ll}{\mathcal{L}}
\newcommand{\Oo}{\mathcal{O}}
\newcommand{\Qq}{\mathcal{Q}}
\newcommand{\Rr}{\mathcal{R}}
\newcommand{\Ss}{\mathcal{S}}
\newcommand{\Tt}{\mathcal{T}}
\renewcommand{\a}{\alpha}
\renewcommand{\b}{\beta}
\newcommand{\g}{\gamma}
\newcommand{\eps}{\epsilon}
\renewcommand{\i}{\iota}
\newcommand{\s}{\sigma}
\renewcommand{\t}{\theta}
\newcommand{\z}{\zeta}
\newcommand{\x}{\chi}
\renewcommand{\d}{\text{d}}
\newcommand{\p}{\partial}
\newcommand{\ip}[2]{\langle #1, #2 \rangle}
\newcommand{\norm}[1]{\lVert#1\rVert}
\newcommand{\Tr}{\text{Tr}}
\newcommand{\M}{M}
\newcommand{\Ms}{M_0}
\newcommand{\Me}{M_T}
\newcommand{\Mext}{\tilde{M}}
\newcommand{\Omegaext}{\tilde{\Omega}}
\newcommand{\gL}{\bar{g}}
\newcommand{\gextt}{\tilde{g}}
\newcommand{\gLextt}{\tilde{\bar{g}}}
\newcommand{\gext}{g}
\newcommand{\gLext}{\bar{g}}
\newcommand{\zz}{s}
\newcommand{\pp}{\tau}
\newcommand{\qq}{\rho}
\newcommand{\thetaext}{\tilde{\theta}}
\newcommand{\gammaext}{\tilde{\gamma}}
\renewcommand{\k}{k}
\newcommand{\ind}{\ell}
\newcommand{\epsf}{\eps_0}
\newcommand{\epsrand}{\kappa}
\newcommand{\C}{A}
\newcommand{\D}{B}
\newcommand{\bdf}{\omega}
\newcommand{\folf}{\lambda}
\newcommand{\amp}{a}
\renewcommand{\v}{\eta}
\newcommand{\ff}{h}
\title[Stability of the non-diffusive Westervelt inverse problem]{Stable determination of the nonlinear parameter in the non-diffusive Westervelt equation from the Dirichlet-to-Neumann map}
\author{Mike Wendels}
\begin{document}

\begin{abstract}
    The Westervelt equation models the propagation of nonlinear acoustic waves in a regime well-suited for applications such as medical ultrasound imaging. In this work, we prove that the nonlinear parameter, as well as the sound speed, can be stably recovered from the Dirichlet-to-Neumann map associated with the non-diffusive Westervelt equation in (1+3)-dimensions. This result is essential for the feasibility of reconstruction methods. The Dirichlet-to-Neumann map encodes boundary measurements by associating a prescribed pressure profile on the boundary with the resulting pressure fluctuations. We prove stability provided the sound speed is a priori known to be close to a reference sound speed and under certain geometrical conditions. We also verify the result through numerical experiments.
\end{abstract}

\maketitle

\section{Introduction} \label{sec:intro}

Ultrasound imaging is widely applied in fields such as medical imaging. One approach to image the medium of interest is by recovering the sound speed. In the scenario where nonlinear interactions of the acoustic waves with the underlying medium are significant, an alternative method is to recover the nonlinear acoustic parameter $B/A$. In the context of medical imaging, this nonlinear parameter has been shown to be significant and tissue-dependent (e.g., \cite{bjorno1986characterization}). Therefore, it can be used for tomography \cite{ichida1983imaging}. Moreover, tomography through the recovery of the nonlinear parameter, in addition to recovery of the sound speed alone, could improve the quality of medical ultrasound imaging.

A commonly used equation in the context of high intensity ultrasound is the Westervelt equation. This equation, which models the pressure fluctuation $u(t,x)$ to a reference pressure field, is given by
\begin{align*}
    \Box_c u(t,x) - \b(x) \p_{t}^2 u^2(t,x) - \eta c^{-2}(x) \Delta \p_t u(t,x) = 0,
\end{align*}
with $c>0$ the sound speed, $\Box_c := c^{-2} \p_t^2 - \Delta$ the acoustic wave operator, $\eta \geq 0$ the diffusivity of sound, and $\b := \left(1 + \frac{B}{2A} \right) \rho_0^{-1} c^{-4}$ the (modified) nonlinear coefficient where $\rho_0>0$ is the mean density field. This equation is an approximation to the Kuznetsov equation in the regime where the nonlinearity is weak and the dissipation is small, and the Kuznetsov equation in turn describes the pressure fluctuation $u$ derived from the Navier-Stokes equations by the decomposition of the pressure, density and velocity in a mean and fluctuative part. For a detailed discussion of these derivations, see, e.g., \cite{hamilton1998nonlinear}.

In this work, we will consider the non-diffusive Westervelt equation, which corresponds to setting $\eta = 0$ and thus to the case where the nonlinear effects dominate the diffusive effects of the medium of interest. Let $\Omega \subset \RR^{3}$ be a compact, connected domain with smooth boundary $\p \Omega$. The inverse problem considered in this work involves the recovery of the nonlinear parameter $\b$, together with the sound speed $c$ in case unknown, in the domain $\Omega$ by emitting acoustic waves into the domain from the boundary and measuring the induced pressure fluctuations at the boundary. These measurements can be described through the Dirichlet-to-Neumann (DN) map.

A higher-order linearization method will be used to analyze the inverse problem. This method was first considered to solve nonlinear hyperbolic inverse problems in \cite{kurylev2018inverse} and has been used to solve many other inverse problems since (e.g., \cite{feizmohammadi2021inverse}, \cite{hintz2022inverse}, \cite{lai2021reconstruction}, \cite{uhlmann2022inverse}, \cite{uhlmann2023inverse}). The idea behind the method is to linearize the DN map in order to obtain information about the desired parameters through linear problems. Due to the nature of the nonlinearity in the Westervelt equation, the second-order linearization of the DN map will be used to include $\b$ in the source term of a linear equation. The first-order linearization of the DN map will allow for the recovery of the sound speed $c$.

Given $c$, it is proven in \cite{acosta2022nonlinear} that the recovery of $\b$ is unique provided the underlying manifold $(\Omega,g)$ with acoustic metric $g_{ij}(x) = c^{-2}(x) \delta_{ij}$ is either simple or satisfies the foliation condition of \cite{uhlmann2016inverse}. It is moreover known that $\b$ can be uniquely recovered from the leading-order Burger's-type behavior of the non-diffusive Westervelt equation \cite{eptaminitakis2024weakly}. In \cite{kaltenbacher2021identification}, \cite{kaltenbacher2022inverse}, unique recovery of $\b$ given $c$ has been proven for the diffusive Westervelt equation with classical strong damping and for several fractional damping models. In \cite{kaltenbacher2023simultaneous}, unique recovery of both $\b$ and $c$ from interior measurements has been proven for the diffusive Westervelt equation, together with a reconstruction procedure.

In this work, we will prove that the recovery of $\b$ and $c$ from the DN map is stable for the non-diffusive Westervelt equation provided $c$ is a priori known to be close to a reference sound speed $c_0$. Moreover, this result requires sufficient regularity and a priori boundedness of the unknown parameters, and requires the foliation condition of \cite{uhlmann2016inverse} to be satisfied. The latter condition is satisfied, for example, when the reference sound speed $c_0$ is radial and satisfies the Herglotz condition $\p_r(r c_0^{-1}(r)) > 0$. The importance of this stability result lies in the fact that reconstruction methods have no chance of succeeding without the inverse problem being stable.

\subsection{Setting}
As mentioned above, we consider a compact, connected domain $\Omega \subset \RR^3$ with smooth boundary $\p \Omega$. Moreover, we will take the sound speed $c > 0$ and the nonlinearity $\b$ to be smooth and to depend only on the spatial variable. We will study the non-diffusive Westervelt equation given by
\begin{align} \label{eq:wv}
    \begin{dcases}
        \Box_c u - \b \p_t^2 u^2= 0, &\text{ in } \M, \\
        u = f, &\text{ on } \Sigma, \\
        u = \p_t u \equiv 0, &\text{ on } \Ms,
    \end{dcases}
\end{align}
where we denote
\begin{align*}
    \M := [0,T] \times \Omega, \quad \Sigma := [0,T] \times \p \Omega, \quad \M_t := \{t\} \times \Omega,
\end{align*}
for ease of notation, for some $T > 0$. Throughout this work, we fix $\Omega, T$. Furthermore, the notation $C_1 \lesssim C_2$ is used to denote that $C_1 \leq C C_2$ for some $C>0$ independent of $C_1, C_2$, and Latin indices run over $1,2,3$ while Greek indices run over $0,1,2,3$ regarding indices that are unspecified.

To discuss the well-posedness of equation \eqref{eq:wv}, we define the energy space 
\begin{align*}
    E^q(M) := \bigcap_{k=0}^q \Cc^k([0,T]; H^{q-k}(\Omega))
\end{align*}
for $q \geq 0$, which is equipped with the norm
\begin{align*}
    \norm{u}_{E^q(M)} := \sup_{t \in [0,T]} \sum_{k=0}^q \norm{\p_t^k u(t,\cdot)}_{H^{q-k}(\Omega)}.
\end{align*}
Since $H^q(M)$ only is an algebra for $q > \frac{3}{2}$ (see, e.g., \cite{choquet2009general}[Appendix I, Proposition 2.3]), we note that $E^q(M)$ is an algebra if $q \geq 2$, causing the norm estimate $\norm{uv}_{E^q(M)} \leq C_{q,\Omega,T} \norm{u}_{E^q(M)} \norm{v}_{E^q(M)}$ to hold for any $u,v \in E^q(M)$ and some $C_{q,\Omega,T} > 0$ dependent on $q,\Omega,T$ in that case.

From \cite{acosta2022nonlinear}, equation \eqref{eq:wv} is known to have a unique solution $u \in E^m(M), \, m \geq 5$ with estimate $\norm{u}_{E^{m}(M)}  \leq \tilde{C}_{m,c,\b,\Omega,T} \norm{f}_{\Cc^{m+1}(\Sigma)}$ for some $\tilde{C}_{m,c,\b,\Omega,T} > 0$ dependent on $m,c,\b, \Omega, T$, provided $c,\b$ are smooth with $c > 0$, and 
\begin{align} \label{eq:fcond_C}
    f \in \Cc^{m+1}(\Sigma), \, \norm{f}_{\Cc^{m+1}(\Sigma)} \leq \epsf, \, \p_t^k f|_{t=0} = 0, \quad \, k \leq m+1,
\end{align}
for some $\epsf > 0$ small enough. We also refer to \cite{kaltenbacher2011well} regarding well-posedness of the more general diffusive Westervelt equation. Due to the nature of the DN map considered in this work, we translate condition \eqref{eq:fcond_C} into a condition in terms of Sobolev spaces through the Sobolev embedding theorem, i.e., we require that
\begin{align*}
    f \in H_{0,\eps_0}^{s+3}(\Sigma) := \{ f \in H_0^{s+3}(\Sigma) : \norm{f}_{H^{s+3}(\Sigma)} \leq \epsf \}, \quad s > m-\tfrac{1}{2} \geq \tfrac{9}{2},
\end{align*}
for some $\eps_0 > 0$ small enough to guarantee well-posedness of equation \eqref{eq:wv}, where $H_{r}^s(\Sigma) := \{ f \in H^{s}(\Sigma) : \p_t^\k f|_{t=r} = 0, \, \k < s-\frac{3}{2} \}$. Hence, $f \in H_{0,\eps_0}^{s+3}(\Sigma)$ combined with $c>0,\b$ being smooth implies a unique solution $u \in E^s(M)$ to equation \eqref{eq:wv} with estimate
\begin{align*}
    \norm{u}_{E^{s}(M)}  \leq C_{s,c,\b,\Omega,T} \norm{f}_{H^{s+3}(\Sigma)}.
\end{align*}
for some $C_{s,c,\b,\Omega,T} > 0$ dependent on $s,c,\b, \Omega, T$. In the remainder of this work, we will fix the regularity constant $s$, and we will assume that $\eps_0 > 0$ is small enough for well-posedness.

Measurements to recover $c,\b$ can be made through the Dirichlet-to-Neumann (DN) map $f \mapsto \p_{\nu} u|_{\Sigma}$, where $\nu(p)$ is the outer unit normal vector to $p \in \p \Omega$ with respect to the Euclidean metric. To ensure injectivity of the DN map, we restrict the measurements through the DN map to only include those for boundary profiles that guarantee a unique solution, i.e., we define the DN map as 
\begin{align*}
    \Lambda: H_{0,\eps_0}^{s+3}(\Sigma) \rightarrow H^{s-2}(\Sigma), \quad \Lambda f := \p_{\nu} u|_{\Sigma}.
\end{align*}
Consider the DN maps $\Lambda_\ind, \, \ind=1,2$ corresponding to $c=c_\ind, \b=\b_\ind$. As a consequence of the way the DN map is defined, there is a $\delta > 0$ such that
\begin{align} \label{eq:DNdelta}
    \norm{(\Lambda_1 - \Lambda_2)(f)}_{H^{s - 2}(\Sigma)} \leq \delta,
\end{align}
provided $f \in H_{0,\eps_0}^{s+3}(\Sigma)$ and $c_\ind > 0, \b_\ind, \, \ind=1,2$ are smooth. The stability results in this work will therefore be in terms of this $\delta$.

\subsection{Main result}
As mentioned earlier, the stated results require the foliation condition of \cite{uhlmann2016inverse} to be satisfied, which will be given in Definition \ref{def:fol}. We note again that this condition is satisfied in the acoustic setting in the case of an isotropic radial sound speed $c(r)$ satisfying the Herglotz condition $\p_r(r c^{-1}(r)) > 0$, for example. 
Moreover, we require $T$ to be large enough to recover the desired parameters, as we will specify below.

We first consider a special case of the main result, which concerns the stable recovery of $\b$ from the DN map in the case where $c$ is a priori known. To this end, we set $T > \text{diam}_g(\Omega)$, where $\text{diam}_{g}(\Omega)$ denotes the supremum of the lengths of all unit-speed geodesics in $\Omega$ under the acoustic metric $g_{ij} = c^{-2} \delta_{ij}$. 

\begin{proposition} \label{thm:stabb1}
Let $\Omega \subset \RR^3$ be a compact, connected domain with smooth boundary $\p \Omega$, and let $\delta$ be such that inequality \eqref{eq:DNdelta} holds for all $f \in H_{0,\eps_0}^{s+3}(\Sigma)$.
Suppose that $c, \b_\ind \in \Cc^{\infty}(\Omega), \, c > 0$ with $\norm{\b_\ind}_{H^l(\Omega)} \leq C_\b$ for some $l \geq s+1, \, C_\b > 0$.
If $(\Omega,g), \, g_{ij} = c^{-2}\delta_{ij}$ satisfies the foliation condition as in Definition \ref{def:fol}, then
\begin{align*}
    \norm{\b_1 - \b_2}_{H^q(\Omega)} \lesssim \delta^\mu
\end{align*}
for any $q < l$, where $0 < \mu < 1$ is dependent on $s,l,q$.
\end{proposition}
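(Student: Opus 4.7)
The strategy is to extract $\b_1 - \b_2$ from the second-order linearization of the DN map as a trilinear boundary pairing, concentrate that pairing into the geodesic ray transform of $\b_1-\b_2$ via Gaussian beams, invert the transform using the foliation condition, and close with Sobolev interpolation against the a priori $H^l$ bound.

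For the second-order linearization, fix three boundary profiles $f^{(j)} \in H^{s+3}_{0,\eps_0}(\Sigma)$, $j=1,2,3$, with $O(1)$ Sobolev norms, and small parameters $\eps_1, \eps_2>0$. Expanding the solution $u_\ind$ of \eqref{eq:wv} with data $\eps_1 f^{(1)} + \eps_2 f^{(2)}$ as $u_\ind = \eps_1 v^{(1)} + \eps_2 v^{(2)} + \eps_1\eps_2 w_\ind + r_\ind$, the terms $v^{(j)}$ solve $\Box_c v^{(j)}=0$ with boundary data $f^{(j)}$ and zero Cauchy data on $\Ms$ (independent of $\ind$), the mixed quadratic term $w_\ind$ solves $\Box_c w_\ind = 2\b_\ind \p_t^2(v^{(1)}v^{(2)})$ with vanishing Cauchy and Dirichlet data, and $\|r_\ind\|_{E^s(M)} = O((\eps_1+\eps_2)^3)$ by the well-posedness estimate. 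Let $v^{(3)}$ solve $\Box_c v^{(3)}=0$ with boundary data $f^{(3)}$ and zero Cauchy data at $t=T$ (well-posed by time reversal). Pairing against $v^{(3)}$ and integrating by parts yields
\begin{align*}
    2\int_M (\b_1-\b_2)\, \p_t^2(v^{(1)} v^{(2)})\, v^{(3)} \dd V = \int_\Sigma (\p_\nu w_1 - \p_\nu w_2)\, v^{(3)} \dd S.
\end{align*}
The right-hand side, up to the cubic $r_\ind$-remainder, equals a second finite difference of $\Lambda_1-\Lambda_2$ evaluated on $\eps_1 f^{(1)}+\eps_2 f^{(2)}$, $\eps_1 f^{(1)}$, and $\eps_2 f^{(2)}$, hence is bounded by $3\delta\,\|v^{(3)}\|_{L^2(\Sigma)} + O(\eps_1 \eps_2 (\eps_1+\eps_2))$. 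Dividing by $\eps_1 \eps_2$ and optimizing $\eps_1 = \eps_2 = \eps \sim \delta^{1/3}$ gives the trilinear estimate
\begin{align*}
    \left|\int_M (\b_1-\b_2)\, \p_t^2(v^{(1)}v^{(2)})\, v^{(3)} \dd V\right| \lesssim \delta^{1/3}.
\end{align*}

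For the geometric step, substitute into this identity complexified Gaussian beam quasi-solutions $v^{(1)} = v^{(2)} = v_\tau$ and $v^{(3)} = \overline{v_{2\tau}}$ of $\Box_c v = 0$ at frequencies $\tau$ and $2\tau$, concentrated along a common null geodesic $\Gamma$ of the Lorentzian product metric $-\dd t^2 + c^{-2}\delta_{ij}\dd x^i\dd x^j$, truncated near $t=0,T$ so that their boundary traces lie in $H^{s+3}_{0,\eps_0}(\Sigma)$. The combined phase $2\phi_\tau - 2\phi_\tau$ vanishes on $\Gamma$ with positive-definite imaginary Hessian transverse to $\Gamma$, so stationary phase reduces the trilinear integral, modulo $O(\tau^{-1})$ corrections, to $\tau^{1/2}$ times a positively weighted geodesic ray transform $I_g[(\b_1-\b_2)\psi]$ along the spatial projection $\gamma$ of $\Gamma$, which is a unit-speed geodesic of $(\Omega,g)$. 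Since $T > \text{diam}_g(\Omega)$, every maximal spatial geodesic of $(\Omega,g)$ arises as such a projection. Balancing $\tau$ against $\delta$ (and against the polynomial growth in $\tau$ of the beam's boundary $H^{s+3}$-norm, which enters the step-one constants) yields $\|I_g(\b_1-\b_2)\|_{L^2(\Gg)} \lesssim \delta^{\mu_1}$ for some $\mu_1\in(0,1)$, with $\Gg$ the space of maximal geodesics of $(\Omega,g)$.

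Under the foliation condition, $I_g$ is stably invertible layer by layer via the pseudodifferential technique of \cite{uhlmann2016inverse}, giving $\|\b_1-\b_2\|_{H^{-N_0}(\Omega)} \lesssim \delta^{\mu_0}$ for some $\mu_0\in(0,1)$ and $N_0\geq 0$. Interpolating against the a priori bound $\|\b_1-\b_2\|_{H^l(\Omega)} \leq 2C_\b$ via $\|u\|_{H^q(\Omega)} \leq \|u\|_{H^{-N_0}(\Omega)}^\theta\|u\|_{H^l(\Omega)}^{1-\theta}$ with $\theta = (l-q)/(l+N_0)\in(0,1)$ for $q<l$ completes the proof with $\mu = \theta\mu_0$. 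The main obstacle I anticipate is the geometric step: producing Gaussian beam boundary data that lies in the restricted space $H^{s+3}_{0,\eps_0}(\Sigma)$ while retaining the coherent phase structure needed for stationary phase, and tracking amplitude and remainder terms in Sobolev norms carefully enough that the step-one cubic-remainder machinery still closes quantitatively in $\delta$.
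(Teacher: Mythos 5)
Your proposal is correct and follows essentially the same route as the paper: second-order linearization of the DN map, an Alessandrini-type identity obtained by pairing against a backward solution, Gaussian beams at frequencies $\tau$ and $2\tau$ (conjugated) to force the phase cancellation and produce the Jacobi-weighted geodesic ray transform via Laplace/stationary phase, inversion of that transform under the foliation condition by the layer-stripping result of \cite{uhlmann2016inverse}, and Sobolev interpolation against the a priori $H^l$ bound. The only differences are cosmetic — you polarize the second linearization with two parameters and three profiles where the paper uses a single profile and a second finite difference in $\eps$ (equivalent once you set $v^{(1)}=v^{(2)}$), and the paper optimizes $\eps$ and $\tau$ jointly against $\delta$ rather than in two stages, which only changes the value of the (unspecified) exponent $\mu$.
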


The main result concerning stable recovery of both $c$ and $\b$ from the DN map is a generalization of Proposition \ref{thm:stabb1} by additionally demanding that the sound speed is a priori known to be close to some reference sound speed $c_0$. To this end, we set $T > \text{diam}_{g_0}(\Omega)$, with $(g_0)_{ij} = c_0^{-2} \delta_{ij}$.

\begin{theorem} \label{thm:stabb2}
Let $\Omega \subset \RR^3$ be a compact, connected domain with smooth boundary $\p \Omega$, and let $\delta$ be such that inequality \eqref{eq:DNdelta} holds for all $f \in H_{0,\eps_0}^{s+3}(\Sigma)$. Suppose $c_\ind \in \Cc^{\infty}(\Omega), \, c_\ind>0, \, \ind=1,2$ satisfy
\begin{align*}
    \norm{c_\ind - c_0}_{\Cc(\Omega)} \leq \kappa, \quad \norm{c_\ind}_{\Cc^k(\Omega)} \leq C_c, 
\end{align*}
for some $\kappa, C_c > 0$ and $c_0 \in \Cc^{k}(\Omega), \, k \gg 1, \, c_0 > 0$ such that $(\Omega, g_0),\, (g_0)_{ij} = c_0^{-2} \delta_{ij}$ satisfies the foliation condition as in Definition \ref{def:fol}. Moreover, suppose that $\b_\ind \in \Cc^{\infty}(\Omega)$ with $\norm{\b_\ind}_{H^l(\Omega)} \leq C_\b$ for some $l \geq s+1, \, C_\b > 0$. Then
\begin{align} \label{eq:stab_c_Lambda}
    \norm{c_1 - c_2}_{\Cc^2(\Omega)} \lesssim \delta^\mu, \quad \norm{\b_1 - \b_2}_{H^q(\Omega)} \lesssim \delta^{\tilde{\mu}},
\end{align} 
for any $q < l$, with $0 < \mu < 1$ dependent on $k$, and $0 < \tilde{\mu} < 1$ dependent on $k,s,l,q$.
\end{theorem}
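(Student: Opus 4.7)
Since the nonlinearity in \eqref{eq:wv} is $\b\p_t^2 u^2$, the first-order Gateaux derivative $d\Lambda_\ind|_0$ equals the Dirichlet-to-Neumann map of the linear wave operator $\Box_{c_\ind}$, independently of $\b$, while the second-order mixed derivative carries $\b$ linearly through the source of a linear wave equation. This suggests a two-step proof: first use $d\Lambda_\ind|_0$ to recover $c$, then use the second-order linearization to recover $\b$ along the lines of Proposition \ref{thm:stabb1}. For Step 1, applying \eqref{eq:DNdelta} to the family $\eps f$ (which remains in $H_{0,\eps_0}^{s+3}(\Sigma)$ for all small $\eps$) and letting $\eps \to 0$ gives $\norm{(d\Lambda_1 - d\Lambda_2)f}_{H^{s-2}(\Sigma)} \lesssim \delta$. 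Because $\norm{c_\ind - c_0}_{\Cc(\Omega)} \le \kappa$ is small and $T>\text{diam}_{g_0}(\Omega)$, the acoustic metrics $g_\ind$ inherit from $g_0$ a uniform strictly convex foliation and $T$ still exceeds $\text{diam}_{g_\ind}(\Omega)$. I then invoke conditional H\"older stability for the hyperbolic DN map under the foliation condition (layer-stripping combined with stability of the geodesic X-ray transform in the sense of Uhlmann--Vasy, and interpolation against the $\Cc^k$ a priori bound on $c_\ind$), obtaining $\norm{c_1 - c_2}_{\Cc^2(\Omega)} \lesssim \delta^\mu$ with $\mu$ depending on $k$.

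\textbf{Recovering $\b$ via the second-order linearization.} For boundary data $f = \eps_1 f_1 + \eps_2 f_2$ in $H_{0,\eps_0}^{s+3}(\Sigma)$, let $v_j^{(\ind)}$ solve $\Box_{c_\ind} v_j^{(\ind)} = 0$ with vanishing Cauchy data at $\Ms$ and $v_j^{(\ind)}|_\Sigma = f_j$, and let $w^{(\ind)} = \p_{\eps_1}\p_{\eps_2} u^{(\ind)}|_{\eps=0}$. Differentiating \eqref{eq:wv} gives
\begin{align*}
    \Box_{c_\ind} w^{(\ind)} = 2\b_\ind \p_t^2(v_1^{(\ind)} v_2^{(\ind)}),\quad w^{(\ind)}|_\Sigma = 0,\quad w^{(\ind)}|_{\Ms} = \p_t w^{(\ind)}|_{\Ms} = 0.
\end{align*}
Pairing with a test solution $z^{(\ind)}$ of $\Box_{c_\ind} z^{(\ind)} = 0$ whose Cauchy data vanish at $\Me$, and integrating by parts twice in $t$, yields the integral identity
\begin{align*}
    2\int_M \b_\ind (\p_t^2 z^{(\ind)}) v_1^{(\ind)} v_2^{(\ind)} \dd x \dd t = \int_\Sigma z^{(\ind)} \p_\nu w^{(\ind)} \dd S.
\end{align*}
Subtracting $\ind=1$ and $\ind=2$ with the same Dirichlet traces for $f_j$ and $z^{(\ind)}$, the right-hand side is bounded by $\delta$ via the second-order part of \eqref{eq:DNdelta}. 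Splitting the left-hand side as $(\b_1-\b_2)$ times a product of $c_1$-solutions plus cross terms in $v_j^{(1)}-v_j^{(2)}$ and $z^{(1)}-z^{(2)}$, linear wave energy estimates control the cross terms by $\norm{c_1-c_2}_{\Cc^2}\lesssim\delta^\mu$ multiplied by the a priori $\Cc^k,H^l$ bounds. Specializing $f_1, f_2$ and the trace of $z^{(1)}$ to a Gaussian-beam triple concentrated along an arbitrary unit-speed $g_1$-geodesic reduces the principal integral to a weighted $g_1$-geodesic X-ray transform of $\b_1-\b_2$; conditional H\"older stability of this transform under the foliation condition (uniform over the $\kappa$-neighborhood of $g_0$), combined with interpolation against $\norm{\b_\ind}_{H^l(\Omega)} \le C_\b$, upgrades the resulting low-regularity bound to $\norm{\b_1-\b_2}_{H^q(\Omega)} \lesssim \delta^{\tilde\mu}$ for every $q<l$.

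\textbf{Main obstacle.} The essential new difficulty relative to Proposition \ref{thm:stabb1} is the mismatch $c_1\neq c_2$: Gaussian beams for $g_1$ and $g_2$ propagate along different null bicharacteristics, so the three-wave product used to produce a clean X-ray integrand cannot be assembled with both metrics simultaneously. The crux of the argument is therefore the perturbative comparison of $v_1^{(1)} v_2^{(1)} z^{(1)}$ with $v_1^{(2)} v_2^{(2)} z^{(2)}$ in $L^2(M)$, modulo an error that must be absorbed into the $\delta^\mu$ already produced in Step 1. The quantitative smallness $\kappa$ together with the $\Cc^k$ a priori control on $c_\ind$ is precisely what keeps the WKB/Gaussian-beam construction, its remainder estimates, and the X-ray stability constants uniform on the $\kappa$-ball around $c_0$; once that uniform comparison is in place, the exponent $\tilde\mu\in(0,1)$ is produced by a standard tracking of H\"older-interpolation exponents in $k,s,l,q$.
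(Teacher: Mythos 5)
Your overall architecture (recover $c$ from the first-order linearization, then recover $\b$ from the second-order one while absorbing the metric mismatch into the $\delta^{\mu}$ from Step~1) matches the paper's, and you correctly identify the central difficulty: comparing Gaussian beams built for two different metrics, which the paper handles with quantitative stability lemmas for geodesic flow, Jacobi fields, and the WKB construction with respect to $c$. However, Step~1 as written has a genuine gap. You invoke ``conditional H\"older stability for the hyperbolic DN map under the foliation condition'' as a black box and describe it as layer-stripping plus stability of the geodesic X-ray transform; but recovering the sound speed is a boundary/lens-rigidity problem, not a linear X-ray transform problem, and --- as the paper points out explicitly, citing \cite{bao2014sensitivity} --- the linear-wave DN map need not depend continuously on $c$ in the presence of caustics, so there is no such black box to cite. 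The actual argument must first extract the scattering relation from the first-order DN map by verifying the Stefanov--Yang conditions \eqref{eq:conditions_SY18} (pairings of $\tilde{\Lambda}_\ind^* y^\mu \tilde{\Lambda}_\ind$ and $\tilde{\Lambda}_\ind^* \p_\mu \tilde{\Lambda}_\ind$ against compactly supported data), obtaining $\norm{\text{dist}(\Ll_1,\Ll_2)}_{\Cc(S_-^*\p\Omega)} \lesssim \delta^{\mu_0}$, and only then apply the boundary-rigidity stability of \cite{stefanov2016boundary} under the foliation condition to reach \eqref{eq:c_delta}. Relatedly, your claim that letting $\eps \to 0$ in \eqref{eq:DNdelta} yields $\norm{(d\Lambda_1 - d\Lambda_2)f}_{H^{s-2}(\Sigma)} \lesssim \delta$ is false: by Lemma \ref{lem:DN12} the quadratic remainder gives a bound $\eps^{-1}\delta + \eps\norm{f}_{H^{s+3}(\Sigma)}^2$, which blows up as $\eps \to 0$ and must instead be optimized (e.g.\ $\eps = \delta^{1/2}$), yielding a strictly smaller power of $\delta$. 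This only changes exponents, but the step as stated is wrong.

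Step~2 is essentially sound in outline but differs from the paper in one respect worth flagging: you propose to invert the weighted ray transform in the metric $g_1$, asserting its stability ``uniformly over the $\kappa$-neighborhood of $g_0$.'' The foliation condition is only hypothesized for $g_0$, so this uniformity is an additional claim you would need to prove. The paper avoids it by always comparing to the reference transform: it writes $\Jj_\ind \b_\ind - \Jj_0 \b_0 = (\Jj_\ind - \Jj_0)\b_\ind + \Jj_0(\b_\ind - \b_0)$, bounds $(\Jj_\ind - \Jj_0)\b_\ind$ by $\delta^{\mu}$ using the stability of geodesics, Fermi coordinates and Jacobi fields in $c$, bounds $\Jj_\ind\b_\ind - \Jj_0\b_0$ by the Alessandrini-identity/DN-map argument (with the cross terms controlled by Lemma \ref{lem:stab_gb} and \eqref{eq:c_delta}, as you anticipate), and then inverts only $\Jj_0$, for which the foliation-based estimate \eqref{eq:stab_wgrt} is available by hypothesis. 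Restructuring your Step~2 around $g_0$ in this way would close that gap without extra work.
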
 

The outline of this work is as follows. In Section \ref{sec:preliminaries}, we discuss the foliation condition and estimates for the acoustic wave equation which will be used throughout this work. In Section \ref{sec:hol}, we discuss the higher-order linearization method, as the first- and second-order linearization of the DN map will allow for the recovery of respectively $c$ and $\b$. Moreover, we will derive an Alessandrini-type identity for $\b$. In Section \ref{sec:GB}, we construct Gaussian beam solutions for the linear terms appearing in the Alessandrini-type identity for $\b$, which allow us to recover a weighted geodesic ray transform of $\b$ from the DN map. In Section \ref{sec:thm1} and \ref{sec:thm2} we then prove Proposition \ref{thm:stabb1} and Theorem \ref{thm:stabb2}, respectively. Section \ref{sec:numerical_experiments} concludes with supporting numerical experiments.

\subsection*{Acknowledgements.} The author would like to thank Gunther Uhlmann for his guidance, support, and patience. The author would also like to thank Yuchao Yi for helpful discussions and comments. This work is partially supported by the NSF.

\section{Preliminaries} \label{sec:preliminaries}

\subsection{Foliation condition}
Throughout this work, let $(\Omega,g)$ be a compact, connected Riemannian manifold with smooth boundary $\p \Omega$ endowed with the acoustic metric $g_{ij} = c^{-2} \delta_{ij}$. Recall that $(\Omega,g)$ is called non-trapping if $\text{diam}_g(\Omega) < \infty$, and that a manifold is called simple if it is non-trapping, has a strictly convex boundary with respect to the metric, and has no conjugate points. We also recall that the scattering relation is defined as
\begin{align*}
    \Ll: S_-^* \p \Omega \rightarrow S_+^* \p \Omega, \quad \Ll(p,\xi') := (q,\eta'),
\end{align*}
taking a point $(p,\xi') \in S_-^* \p \Omega$ along the maximal unit-speed geodesic $\g$ emerging from $(p,\xi')$ in the metric $g$ to the corresponding point $(q,\eta') \in S_+^* \p \Omega$ where $\g$ intersects again with the boundary, where $S_{\pm}^* \p \Omega := \{ (p,\xi') \in S^* \p \Omega : \mp \langle \xi', \nu(p)\rangle_g < 0\}$. Furthermore, the boundary distance function $d: \p \Omega \times \p \Omega \rightarrow \RR$ describes the length $d(p,q)$ of the minimizing geodesic connecting $p$ and $q$.

The foliation condition was introduced in \cite{uhlmann2016inverse}, and implies that a manifold can be foliated by strictly convex hypersurfaces with respect to the metric. 

\begin{definition}[\cite{stefanov2016boundary}] \label{def:fol}
    If there exists a smooth function $\folf: \Omega \rightarrow [0,T)$ which level sets $\Gamma_t := \folf^{-1}(t), \, t \in [0,T)$ are strictly convex viewed from $\folf^{-1}((0,t))$ with respect to $g$, $\d \folf \neq 0$ on each level set, $\Gamma_0 = \p \Omega$, and $\Omega \backslash \cup_{t \in [0,T)} \Gamma_t$ has an empty interior, then $(\Omega,g)$ is said to satisfy the foliation condition.
\end{definition}

We emphasize that the foliation condition generalizes the notion of simpilicity by allowing for conjugate points, where we note that the foliation condition implies $(\Omega,g)$ being non-trapping \cite{stefanov2016boundary}[Proposition 5.1] and $\p \Omega$ being strictly convex with respect to the metric. Because the foliation condition allows for conjugate points, geodesics cannot be parameterized uniquely anymore by their starting point and endpoint at the boundary as was the case under simplicity. However, they can still be uniquely parameterized in $S_-^* \p \Omega$ as geodesic flow is continuous with respect to the metric (see, e.g., Lemma \ref{prop:stab_geod_flow_0}).

The recovery of $\b$ from the DN map as considered in this work involves the inversion of a weighted geodesic ray transform $I \b$. This inversion is possible under the foliation condition as proven in \cite{uhlmann2016inverse}. We first discuss the local result, for which we take $p \in \p \Omega$ so that the boundary is locally strictly convex with respect to the metric. We denote the corresponding strictly convex boundary segment by $\Gamma_p$, and we note that in this local setting the condition $\Gamma_0 = \p \Omega$ in Definition \ref{def:fol} can be relaxed to $\Gamma_p \subset \Gamma_0 \cap \p \Omega$, and that the interior of $\Omega \backslash \cup_{t \in [0,T)} \Gamma_t$ does not need to be empty. Moreover, let $\folf$ be as in Definition \ref{def:fol}, let $\bdf$ be a boundary defining function (i.e., $\bdf \in \Cc^{\infty}(\Omega)$, $\bdf > 0$ in $\Omega$ and vanishing non-degenerately at $\p \Omega$), and consider an open region $O_p(\epsrand) := \{\folf > -\epsrand\} \cap \{ \bdf > 0 \}$ around $p$ for some $\epsrand > 0$ small enough such that $\p \bar{O}_p \cap \p \Omega \subset \Gamma_p$. A sketch of the setting is given in figure \ref{fig:lens_shaped_region}.

We can ensure that $O_p$ does not contain conjugate points by possibly further decreasing $\epsrand$, as the foliation condition ensures that conjugate points can be avoided locally due to strict convexity of the level sets $\Gamma_t$ with respect to the metric. The weighted geodesic ray transform can then be inverted along the geodesics that are nearly tangential to $p$, and thus in $O_p(\kappa)$ for $\kappa$ small. The inversion will take the weighted geodesic ray transform on $O_p$ to the space $H_{\mathsf{F}_p}^q(O_p)$ for some $q \geq -1$, where
\begin{align*}
    H_{\mathsf{F}_p}^q(O_p) := \{ f \in H_{\text{loc}}^q (O_p) : e^{-\mathsf{F}_p/(\folf+\epsrand)} f \in H^q(O_p) \}
\end{align*}
with $\mathsf{F}_p>0$ some constant dependent on $O_p$.

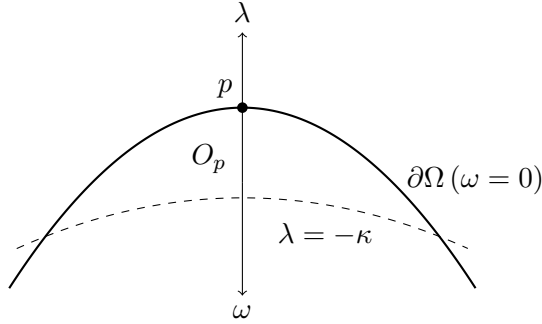
\begin{figure}[ht]
    \centering
    \begin{tikzpicture}[scale=2]

        \draw[->] (0,1) -- (0,2) node[above] {$\folf$};
        \draw[->] (0,1) -- (0,0.25) node[below] {$\bdf$};

        \draw[dashed,black,domain=-1.5:1.5,samples=100] plot(\x,{0.9-0.15*\x*\x}) node[below] at (0.55,0.8) {$\folf = -\epsrand$};

        \draw[thick,black,domain=-1.55:1.55,samples=100] plot(\x,{1.5-0.5*\x*\x}) node[above] at (1.55,0.85) {$\p \Omega \, (\bdf = 0)$};
        
        \fill[black] (0,1.5) circle (1pt) node[above left] {$p$};
        \fill[black] node[above left] at (-0.04,1) {$O_p$};
        
    \end{tikzpicture}
    \caption{Sketch of the setting of \cite{uhlmann2016inverse} to locally invert a weighted geodesic ray transform around some point $p \in \p \Omega$ for which the boundary is locally strictly convex with respect to the metric.}
    \label{fig:lens_shaped_region}
\end{figure}

Under the foliation condition, the above local result can be extended to the entire domain $\Omega$ through a layer stripping argument. First, the weighted geodesic ray transform can be locally inverted in a neighborhood of $\p \Omega$ as described above. This procedure can then be repeated by considering appropriate geodesics in $S_-^* \p \Omega$, allowing one to march inward into the domain while avoiding conjugate points. In this way, $\b$ can be recovered stably from $I \b|_{\Gg}$ on $\Omega$, where $\Gg$ denotes the family of geodesics considered in this construction. The inversion will then cause $\b \in H_\mathsf{F}^q(\Omega)$ for some $q \geq -1$, where
\begin{align*}
    H_\mathsf{F}^q(\Omega) := \{ f \in H^q(\Omega) : f|_{O_{p_i}} \in H_{\mathsf{F}_i}^q(O_{p_i}), \, \cup_{i=1}^{\infty} O_{p_i} = \Omega \}
\end{align*}
with $\mathsf{F} := \{ \mathsf{F}_{p_i} \}$ the weights for each local lens-shaped region in the used partition of $\Omega$. Moreover, this inversion is stable with estimate
\begin{align} \label{eq:stab_wgrt}
    \norm{\b}_{H_\mathsf{F}^{q-1}(\Omega)} \lesssim \norm{I \b|_{\Gg}}_{H^q(D)}, 
\end{align}
for any $q \geq 0$, where $D \subset S_-^* \p \Omega$ parameterizes the geodesics in $\Gg$. We refer to \cite{uhlmann2016inverse} for more details.

\subsection{Estimates for the acoustic wave equation}
Consider the Lorentzian manifold $(M,\bar{g}), \, \bar{g} = -\d t^2 + g$ corresponding to the Riemannian manifold $(\Omega,g), \, g = c^{-2} \delta_{ij}$, where $c \in \Cc^{\infty}(\Omega), c>0$ with $\norm{c}_{\Cc^k(\Omega)} \leq C_c$ for some $k \gg 1, \, C_c > 0$. Let $\Box_{\gL}$ be the d'Alembertian, i.e.,
\begin{align*}
    \Box_{\gL} := |\det \gL|^{-\frac 1 2} \p_\mu (|\det \gL|^{\frac 1 2} \gL^{\mu \nu} \p_\nu)
\end{align*}
in coordinates $(x^\mu) = (t, x^i)$, where $\p_\mu := \p_{x^\mu}$ and $\bar{g}^{\mu \nu} := (\bar{g}^{-1})_{\mu \nu}$. We then have the following results regarding the geometric wave equation, where the first result is based on \cite{kachalov2001inverse}[Theorem 2.45] combined with the argument of \cite{lassas2020uniqueness}[Corollary 2] with $\Delta$ replaced by the Laplace-Beltrami operator $\Delta_g$, and where the second result is an immediate consequence of \cite{kachalov2001inverse}[Corollary 2.36].

\begin{lemma}[\cite{kachalov2001inverse},\cite{lassas2020uniqueness}] \label{lem:estimate1}
Let $q \geq 0$, $\p_t^k F \in L^1([0,T]; H^{q-k}(\Omega)), \, k=0,1,\dots,q$, and $f \in H_0^{q+1}(\Sigma)$. Then the equation 
\begin{align} \label{eq:lwe}
    \begin{dcases}
    \Box_{\bar{g}} v = F, &\text{ in } \M, \\
    v = f, &\text{ on } \Sigma, \\
    v = \p_t v \equiv 0, &\text{ on } \Ms,
    \end{dcases}
\end{align}
has a unique solution $v \in E^{q+1}(M)$ with $\p_{\nu} v|_{\Sigma} \in H^q(\Sigma)$ and
\begin{align*}
    \norm{v}_{E^{q+1}(M)} + \norm{\p_{\nu} v|_{\Sigma}}_{H^q(\Sigma)} \leq C_{q,T} \left( \sum_{k=0}^q \norm{\p_t^k F}_{L^1([0,T];H^{q-k}(\Omega))} + \norm{f}_{H^{q+1}(\Sigma)} \right)
\end{align*}
for some $C_{q,T} > 0$ dependent on $q,T$.
\end{lemma}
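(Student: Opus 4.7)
The plan is to reduce to the case of zero Dirichlet data, prove the interior $E^{q+1}$ estimate by the standard energy method combined with elliptic regularity, and establish the Neumann trace estimate by a multiplier argument.

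First, I would remove the inhomogeneous boundary condition by extending $f$ to a function $\tilde{f}$ on $M$ with matching Sobolev regularity and matching vanishing of time derivatives at $t=0$, so that $w := v - \tilde{f}$ solves \eqref{eq:lwe} with source $\tilde{F} := F - \Box_{\bar{g}} \tilde{f}$ and zero Dirichlet and initial conditions. Standard extension theorems, together with the structure of the principal part of $\Box_{\bar{g}}$, ensure that the time-integrated Sobolev norms of $\tilde{F}$ and the normal trace $\p_\nu \tilde{f}|_\Sigma$ are controlled by $\norm{f}_{H^{q+1}(\Sigma)}$. It therefore suffices to prove the estimate for $w$.

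For the interior estimate on $w$, I would start from the baseline $E^1$ bound, obtained by multiplying the equation by $c^2 \p_t w$ and integrating over $[0,t] \times \Omega$: the boundary contribution drops because $\p_t w|_\Sigma = 0$, and Gr\"onwall's inequality yields $\norm{w}_{E^1(M)} \lesssim \norm{\tilde{F}}_{L^1([0,T];L^2(\Omega))}$. For $q \geq 1$, each time derivative $\p_t^j w$ solves an analogous homogeneous-boundary problem with source $\p_t^j \tilde{F} + [\Box_{\bar{g}}, \p_t^j] w$, whose commutator term involves only lower-order time derivatives of $w$ and is absorbable. Applying the baseline estimate inductively controls $\p_t^j w$ in $\Cc^0([0,T]; H^1(\Omega))$. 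Spatial regularity at each fixed time then follows from rewriting the equation as $\Delta_g w = c^{-2}\p_t^2 w - \tilde{F}$ and applying elliptic regularity with zero Dirichlet data, iterating upward to recover $\p_t^j w \in \Cc^0([0,T]; H^{q+1-j}(\Omega))$.

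The main obstacle is the Neumann trace estimate, which is the classical hidden regularity of the wave equation and does not follow from the interior energy estimate alone. My approach would be the multiplier method: fix a smooth vector field $X$ on $\bar{\Omega}$ that equals $\nu$ on $\p\Omega$, multiply $\Box_{\bar{g}} w = \tilde{F}$ by $X w$, and integrate by parts over $M$. Because $w|_\Sigma = 0$ the tangential gradient of $w$ vanishes on $\Sigma$, so after the algebraic manipulations the only surviving boundary term is proportional to $\int_\Sigma |\p_\nu w|^2$, which is then bounded by $\norm{w}_{E^1(M)}^2 + \norm{\tilde{F}}_{L^1([0,T];L^2(\Omega))}^2$. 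Replacing $w$ by $\p_t^j w$ yields the analogous $L^2(\Sigma)$ estimates for time derivatives of the trace, and combining these with tangential differentiation along $\Sigma$ (controlled by the interior $E^{q+1}$ norm already established) assembles the claimed $H^q(\Sigma)$ bound on $\p_\nu v|_\Sigma$.
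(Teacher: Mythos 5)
The paper does not actually prove this lemma; it is quoted from \cite{kachalov2001inverse}[Theorem 2.45] together with the inductive argument of \cite{lassas2020uniqueness}[Corollary 2], so the only comparison available is between your sketch and the standard proofs in those references. Most of your outline (energy identity with multiplier $c^2\p_t w$, induction on time derivatives using that the metric is time-independent, elliptic bootstrap in space, and the Rellich-type multiplier identity with a vector field extending $\nu$ for the ``hidden regularity'' of the Neumann trace) is correct and is essentially how the higher-order cases are reduced to the base case in \cite{lassas2020uniqueness}.

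The genuine gap is in your first step, and it is not cosmetic. You claim that an extension $\tilde f$ of $f \in H^{q+1}(\Sigma)$ can be chosen so that $\Box_{\bar g}\tilde f$ is controlled in $\sum_{k}L^1([0,T];H^{q-k}(\Omega))$ by $\norm{f}_{H^{q+1}(\Sigma)}$. This is false: a bounded right inverse of the trace map gains only half a derivative, so $\tilde f \in H^{q+3/2}(M)$ at best, and $\Box_{\bar g}\tilde f \in H^{q-1/2}(M)$, which falls half a derivative short of what your baseline energy estimate needs (anisotropic extensions, e.g.\ harmonic extension in the spatial slices, run into the same half-derivative deficit because $\Box_{\bar g}$ costs two derivatives while the trace lifting gains only $1/2$). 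Consequently your argument proves the estimate only with $\norm{f}_{H^{q+3/2}(\Sigma)}$ on the right-hand side. The statement as given --- already at $q=0$: $f\in H^1_0(\Sigma)$ implies $v\in \Cc([0,T];H^1)\cap\Cc^1([0,T];L^2)$ with $\p_\nu v|_\Sigma\in L^2(\Sigma)$ --- is the sharp Lasiecka--Lions--Triggiani regularity theorem, and its standard proof does \emph{not} subtract an extension: one first establishes the hidden regularity $\p_\nu\psi|_\Sigma\in L^2(\Sigma)$ for the \emph{homogeneous}-Dirichlet adjoint problem by exactly the multiplier identity you describe, and then obtains the solution of the inhomogeneous Dirichlet problem and its interior regularity by transposition (duality) against that trace estimate, upgrading to $q\ge 1$ afterwards by time differentiation and elliptic regularity as you outline. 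So the multiplier computation you placed at the end is in fact the engine that must drive the reduction at the beginning; without rerouting the argument through duality, the interior estimate $\norm{v}_{E^{q+1}(M)}\lesssim\norm{f}_{H^{q+1}(\Sigma)}+\dots$ does not close.
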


\begin{lemma}[\cite{kachalov2001inverse}] \label{lem:estimate2}
Consider equation \eqref{eq:lwe} with $F \in L^1([0,T]; H^{-1}(\Omega))$ and $f \equiv 0$. Then there is a unique solution $v \in \Cc([0,T];L^2(\Omega)) \cap \Cc^1([0,T];H^{-1}(\Omega))$ with estimate
\begin{align*}
    \max_{t\in[0,T]} \left( \norm{v}_{L^2(\Omega)} + \norm{\p_t v}_{H^{-1}(\Omega)} \right) \leq C_T \norm{F}_{L^1([0,T];H^{-1}(\Omega))}
\end{align*}
for some $C_T > 0$ dependent on $T$. 
\end{lemma}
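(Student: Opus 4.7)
The plan is to split the argument into two stages mirroring the higher-order linearization of $\Lambda$, bootstrapping from Proposition \ref{thm:stabb1}. Since the Westervelt nonlinearity $\b \p_t^2 u^2$ is quadratic in $u$, the Fréchet derivative $d\Lambda_\ind|_{f=0}$ coincides with the DN map of the linear acoustic wave equation $\Box_{c_\ind} v = 0$ with the same Dirichlet data. Hypothesis \eqref{eq:DNdelta} thus yields a $\delta$-bound on the difference of these linear DN maps for all admissible $f$. Combined with the foliation condition on the reference geometry $(\Omega, g_0)$ and the a priori bounds $\norm{c_\ind-c_0}_{\Cc(\Omega)} \leq \kappa$, $\norm{c_\ind}_{\Cc^k(\Omega)} \leq C_c$, available stability estimates for recovery of the sound speed in a linear wave equation from its DN map (a boundary-rigidity-type result valid under the foliation condition) then produce
\[
  \norm{c_1-c_2}_{\Cc^2(\Omega)} \lesssim \delta^{\mu},
\]
with $\mu \in (0,1)$ depending on $k$.

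Next, I extend the strategy of Proposition \ref{thm:stabb1} to the case $c_1 \neq c_2$ using the Alessandrini-type identity for $\b$ derived in Section \ref{sec:hol}. Pairing the second-order linearization of $\Lambda_1 - \Lambda_2$ against a product of three linear solutions $v_1, v_2, v_3$ yields an identity whose leading term is $\int_\M (\b_1 - \b_2)\, v_1 v_2 v_3 \dd V$, accompanied by a remainder $R_\Lambda$ bounded by $\delta$ and a new remainder $R_c$ encoding the mismatch $c_1 \neq c_2$. The latter is controlled via the energy estimates of Lemmas \ref{lem:estimate1}--\ref{lem:estimate2} by $\norm{c_1-c_2}_{\Cc^2(\Omega)}$ multiplied by Sobolev norms of the $v_j$. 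Choosing the $v_j$ as Gaussian beams from Section \ref{sec:GB} that concentrate along geodesics of the known reference metric $g_0$ — the metric for which the foliation condition holds — extracts a weighted $g_0$-geodesic ray transform of $\b_1 - \b_2$ modulo these remainders. Inversion via \eqref{eq:stab_wgrt} produces a preliminary bound in $H_{\mathsf{F}}^{-1}(\Omega)$. Sobolev interpolation against the a priori bound $\norm{\b_\ind}_{H^l(\Omega)} \leq C_\b$ then upgrades this to $\norm{\b_1-\b_2}_{H^q(\Omega)} \lesssim \delta^{\tilde{\mu}}$ for every $q < l$, with $\tilde{\mu} \in (0,1)$ depending on $k,s,l,q$.

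The main obstacle is controlling the metric-mismatch remainder $R_c$. Gaussian beams constructed for $g_0$ do not exactly solve $\Box_{c_\ind} v_j = 0$; their residual is proportional to $c_\ind - c_0$ together with derivatives, and the beam amplitudes themselves depend on a large concentration parameter. One must therefore tune this concentration parameter so that the combined error $R_\Lambda + R_c$ is balanced against the leading integral, use Step 1 to bound $R_c$ by $\delta^{\mu}$, and propagate these estimates through the weighted geodesic ray transform inversion and the final interpolation without collapsing $\tilde{\mu}$ to zero. Carefully tracking the a priori $\Cc^k$-bound on $c_\ind$ in every residual estimate, and treating $c_0$ asymmetrically as a fixed background while $c_\ind - c_0$ acts as a small perturbation, will be essential.
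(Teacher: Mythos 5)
Your proposal does not address the statement at hand. Lemma \ref{lem:estimate2} is a low-regularity energy estimate for the \emph{linear} wave equation \eqref{eq:lwe} with source $F \in L^1([0,T];H^{-1}(\Omega))$ and homogeneous Dirichlet data: it asserts existence, uniqueness, and the bound $\max_t(\norm{v}_{L^2(\Omega)} + \norm{\p_t v}_{H^{-1}(\Omega)}) \leq C_T \norm{F}_{L^1([0,T];H^{-1}(\Omega))}$. What you have written is instead a proof sketch for Theorem \ref{thm:stabb2} (simultaneous stable recovery of $c$ and $\b$ from the DN map): linearization of $\Lambda$, recovery of the scattering relation, Alessandrini identities, Gaussian beams, inversion of the weighted geodesic ray transform, and Sobolev interpolation. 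None of that machinery is relevant to, or establishes, the stated energy inequality; in fact your sketch \emph{invokes} Lemmas \ref{lem:estimate1}--\ref{lem:estimate2} as tools, so it presupposes the very statement it is supposed to prove.

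For the record, the paper does not prove Lemma \ref{lem:estimate2} from scratch either: it is quoted as an immediate consequence of \cite{kachalov2001inverse}[Corollary 2.36]. A self-contained argument would proceed by transposition/duality: for $F$ smooth one has the classical energy estimate; for the general case one pairs $v$ against solutions of the backward homogeneous problem with final data in $H_0^1(\Omega) \times L^2(\Omega)$, uses the standard energy bound for that adjoint problem together with G{\aa}rding's inequality, and reads off the $L^2 \times H^{-1}$ control of $(v,\p_t v)$ by duality; uniqueness and the stated continuity in time then follow by density of smooth sources in $L^1([0,T];H^{-1}(\Omega))$. If you intend to submit a proof of Lemma \ref{lem:estimate2}, this duality argument is the route to take; your current text should be redirected to Theorem \ref{thm:stabb2}, where its outline is broadly consistent with the paper's two-step strategy.
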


We will now translate these results in terms of the acoustic wave equation. We first define a scaled acoustic operator, i.e.,
\begin{align*}
    P u := -c^2 \Box_cu = (-\p_t^2 + c^2 \Delta) u
\end{align*}
and we observe that
\begin{align} \label{eq:P-BoxgL}
    Pu = \Box_{\gL} u + \ip{\d (\log c)}{\d u}_{\gL}.
\end{align}
Since $P$ is a first-order perturbation of $\Box_{\gL}$, their principal symbols are equal and the above results can still be observed to hold for $\Box_{\gL}$ replaced by $P$, with only a change of constants in the estimates. This is due to the fact that Gårding's inequality (see, e.g., \cite{kachalov2001inverse}[Theorem 2.22]) which is underlying to \cite{kachalov2001inverse}[Corollary 2.36, Theorem 2.45] is unaltered up to the constant involved. Hence, the results of Lemma \ref{lem:estimate1}, Lemma \ref{lem:estimate2} are still observed to hold with $\Box_{\gL}$ replaced by $P$. As $\Box_c$ is equal to $P$ up to scaling, the same holds for $\Box_c$. 

\begin{corollary} \label{cor:estimate1}
    The results of Lemma \ref{lem:estimate1}, Lemma \ref{lem:estimate2} hold for $\Box_{\gL}$ replaced by $P$ or $\Box_c$.
\end{corollary}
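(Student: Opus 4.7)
The strategy is to exploit the identity $Pu = \Box_{\gL} u + \ip{\d(\log c)}{\d u}_{\gL}$ from \eqref{eq:P-BoxgL}, which displays $P$ as a first-order perturbation of $\Box_{\gL}$ whose coefficients are smooth and controlled by $\norm{c}_{\Cc^k(\Omega)}$. Since $c$ is time-independent, this perturbation simplifies further to the purely spatial term $c \, \nabla c \cdot \nabla u$. The principal symbols of $P$ and $\Box_{\gL}$ coincide, so the plan is to verify that this lower-order term only changes constants in the estimates of Lemmas \ref{lem:estimate1} and \ref{lem:estimate2}, and then to reduce the $\Box_c$ case to the $P$ case through $\Box_c = -c^{-2} P$.

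For the analog of Lemma \ref{lem:estimate1} with $\Box_{\gL}$ replaced by $P$, I would run a Picard iteration: given $F$ and $f$, set $v_0 \equiv 0$ and inductively let $v_{n+1} \in E^{q+1}(M)$ be the solution, provided by Lemma \ref{lem:estimate1}, of
\begin{align*}
\begin{dcases}
\Box_{\gL} v_{n+1} = F - c \, \nabla c \cdot \nabla v_n, & \text{in } M, \\
v_{n+1} = f, & \text{on } \Sigma, \\
v_{n+1} = \p_t v_{n+1} \equiv 0, & \text{on } \Ms.
\end{dcases}
\end{align*}
Applied to the difference $v_{n+1} - v_n$, Lemma \ref{lem:estimate1} yields an estimate of the form
\begin{align*}
\norm{v_{n+1} - v_n}_{E^{q+1}(M)} \leq C_{q,T,c} \cdot T \cdot \norm{v_n - v_{n-1}}_{E^{q+1}(M)},
\end{align*}
since each time derivative of $c \, \nabla c \cdot \nabla v$ costs at most one spatial derivative of $v$ times a factor of $\norm{c}_{\Cc^{q+1}}$. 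This is a contraction for $T$ small; partitioning $[0,T]$ into short intervals and iterating via a Grönwall-type argument yields convergence of $v_n$ to a solution $v$ of $Pv = F$ in $E^{q+1}(M)$ with the desired estimate. Applying Lemma \ref{lem:estimate1} once more to $\Box_{\gL} v = F - c \, \nabla c \cdot \nabla v$ then extracts the Neumann trace bound on $\Sigma$.

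The same iteration treats Lemma \ref{lem:estimate2} once one checks that $c \, \nabla c \cdot \nabla v$ lies in $L^1([0,T]; H^{-1}(\Omega))$ whenever $v \in \Cc([0,T]; L^2(\Omega))$, which holds since $\nabla v \in H^{-1}(\Omega)$ by duality and multiplication by the smooth factor $c \, \nabla c$ preserves $H^{-1}$. The $\Box_c$ case then follows at once: the problem $\Box_c u = F$ with prescribed data is equivalent to $Pu = -c^2 F$ with the same data, and multiplication by $c^2 \in \Cc^k(\Omega)$ is continuous on each relevant Sobolev space for $k$ large enough, which transfers the estimates to $\Box_c$ with constants now depending additionally on $\norm{c}_{\Cc^k}$.

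The main obstacle will be closing the contraction step in the low-regularity setting of Lemma \ref{lem:estimate2}, where the iteration must be carried out at the level of the $\Cc([0,T]; L^2) \cap \Cc^1([0,T]; H^{-1})$ norm and the product $c \, \nabla c \cdot \nabla v$ is interpreted via duality. If the direct iteration proves delicate, an equivalent route is to redo the underlying energy/Gårding estimate for $P$ directly, absorbing the first-order perturbation by Grönwall, as suggested in the comment preceding the corollary.
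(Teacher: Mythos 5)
Your proposal is correct, but it proves the corollary by a genuinely different route than the paper. The paper does not iterate: it observes that $P$ and $\Box_{\gL}$ have the same principal symbol and then asserts that the energy estimates underlying Lemmas \ref{lem:estimate1} and \ref{lem:estimate2} — ultimately G\aa rding's inequality in \cite{kachalov2001inverse} — are unaffected by a first-order perturbation except for the constants, so the conclusions transfer directly; the $\Box_c$ case then follows by the rescaling $\Box_c = -c^{-2}P$, exactly as in your last step. You instead keep Lemmas \ref{lem:estimate1} and \ref{lem:estimate2} as black boxes and absorb the perturbation $\ip{\d(\log c)}{\d u}_{\gL} = c\,\nabla c\cdot\nabla u$ (your computation of this term is correct, using that $c$ is time-independent) into the source via a Picard iteration. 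Your contraction bookkeeping is right: for $w\in E^{q+1}(M)$ one has $\p_t^k(c\,\nabla c\cdot\nabla w)=c\,\nabla c\cdot\nabla\p_t^k w\in H^{q-k}(\Omega)$, and the $L^1$-in-time norm supplies the small factor; likewise multiplication by $c\,\nabla c$ preserves $H^{-1}(\Omega)$ for the low-regularity case. The trade-off: the paper's argument is shorter but asks the reader to re-inspect the proofs in the cited reference, whereas yours is self-contained modulo the two stated lemmas at the cost of extra machinery. One caveat in your version: restarting the iteration on later subintervals of $[0,T]$ requires the lemmas with nonzero Cauchy data at the initial time, which the statements in the paper do not literally provide (though \cite{kachalov2001inverse} does); you can sidestep this entirely by running the iteration on all of $[0,T]$ and using the time-localized estimate $d_{n+1}(t)\lesssim\int_0^t d_n(s)\,\d s$, which gives convergence via the factorial decay $(CT)^n/n!$ without partitioning.
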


\section{Higher-order linearization method} \label{sec:hol}
In this section we discuss the first- and second-order linearization of the DN map, which will be an essential tool for the recovery of $c$ and $\b$. From the second-order linearization, we will derive an Alessandrini-type identity for $\b$.

\begin{lemma} \label{prop:hol}
Let $f \in H_{0}^{s+3}(\Sigma)$, and let $\eps > 0$ be small enough such that $\norm{\eps f}_{H^{s+3}(\Sigma)} \leq \epsf$. Suppose that $c,\b \in \Cc^{\infty}(\Omega), \, c > 0$ with $\norm{c}_{\Cc^{k}(\Omega)} \leq C_c$, $\norm{\b}_{H^{s+1}(\Omega)} \leq C_\b$ for some $k \gg 1, \, C_c, C_\b > 0$. Then there is a unique solution $u \in E^s(M)$ to
\begin{align*}
    \begin{dcases}
    \Box_c u - \b \p_t^2 u^2 = 0, &\text{ in } \M, \\
    u = \eps f, &\text{ on } \Sigma, \\
    u = \p_t u \equiv 0, &\text{ on } \Ms,
    \end{dcases}
\end{align*}
satisfying the estimate
\begin{align} \label{eq:est_u_epsf}
    \norm{u}_{E^{s}(M)} \lesssim \eps \norm{f}_{H^{s+3}(\Sigma)}.
\end{align}
Moreover, $u$ can be expanded as
\begin{align} \label{eq:exp}
    u = \eps v + \Qq = \eps v + \eps^2 w + \Rr, 
\end{align}
where $v$ solves
\begin{align} \label{eq:lin1}
    \begin{dcases}
    \Box_c v = 0, &\text{ in } \M, \\
    v = f, &\text{ on } \Sigma, \\
    v = \p_t v \equiv 0, &\text{ on } \Ms,
    \end{dcases}
\end{align}
with $v \in E^{s+3}(\M), \, \p_{\nu} v|_{\Sigma} \in H^{s+2}(\Sigma)$ and estimate 
\begin{align}\label{eq:est_v}
    \norm{v}_{E^{s+3}(\M)} + \norm{\p_{\nu} v|_{\Sigma}}_{H^{s+2}(\Sigma)}  &\lesssim \norm{f}_{H^{s+3}(\Sigma)},
\end{align}
where $w$ solves
\begin{align} \label{eq:lin2}
    \begin{dcases}
    \Box_c w - \b \p_t^2 v^2 = 0, &\text{ in } \M, \\
    w \equiv 0, &\text{ on } \Sigma, \\
    w = \p_t w \equiv 0, &\text{ on } \Ms,
    \end{dcases}
\end{align}
with $w \in E^{s+2}(\M), \, \p_{\nu} w|_{\Sigma} \in H^{s+1}(\Sigma)$ and estimate
\begin{align}\label{eq:est_w}
    \norm{w}_{E^{s+2}(\M)} + \norm{\p_{\nu} w|_\Sigma}_{H^{s+1}(\Sigma)} &\lesssim \norm{f}_{H^{s+3}(\Sigma)}^2,
\end{align}
and where the remainder terms $\Qq, \Rr$ satisfy 
\begin{align}
    \norm{\Qq}_{E^{s-1}(M)} + \norm{\p_{\nu} \Qq|_{\Sigma}}_{H^{s-2}(\Sigma)} &\lesssim \eps^2 \norm{f}_{H^{s+3}(\Sigma)}^2, \label{eq:Qest} \\
    \norm{\Rr}_{E^{s-1}(M)} + \norm{\p_{\nu} \Rr|_{\Sigma}}_{H^{s-2}(\Sigma)} &\lesssim \eps^3 \norm{f}_{H^{s+3}(\Sigma)}^3. \label{eq:Rest}
\end{align}
\end{lemma}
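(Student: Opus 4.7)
The plan is to combine the cited nonlinear well-posedness for $u$ with repeated application of the linear wave estimate from Corollary \ref{cor:estimate1}, exploiting the algebraic factorization $u^2 - \eps^2 v^2 = \Qq\,(u+\eps v)$ to extract the $\eps^3$ smallness of the final remainder. Existence of a unique $u \in E^s(M)$ satisfying \eqref{eq:est_u_epsf} is immediate from the well-posedness result of \cite{acosta2022nonlinear} recalled in the introduction, since $\eps f \in H_{0,\eps_0}^{s+3}(\Sigma)$ by the hypothesis on $\eps$.

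For $v$, I would apply Corollary \ref{cor:estimate1} to the homogeneous wave equation with Dirichlet data $f$ at regularity $q = s+2$, which directly delivers $v \in E^{s+3}(M)$ together with the Neumann trace bound in \eqref{eq:est_v}. For $w$, the algebra property of $E^{s+3}(M)$ (valid since $s+3 \geq 2$) gives $v^2 \in E^{s+3}(M)$ with norm $\lesssim \norm{v}_{E^{s+3}(M)}^2$, so $\p_t^2 v^2 \in E^{s+1}(M)$; multiplication by the smooth $\b$, whose $H^{s+1}(\Omega)$-norm is controlled by $C_\b$, preserves the requisite spatial regularity since $s+1 > 3/2$. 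Corollary \ref{cor:estimate1} with $q = s+1$ then produces \eqref{eq:est_w}.

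For the remainders I subtract the equations. The function $\Qq := u - \eps v$ solves
\begin{equation*}
    \Box_c \Qq = \b\,\p_t^2 u^2 \text{ in } \M, \quad \Qq = 0 \text{ on } \Sigma, \quad \Qq = \p_t \Qq \equiv 0 \text{ on } \Ms,
\end{equation*}
and $\norm{u^2}_{E^s(M)} \lesssim \norm{u}_{E^s(M)}^2 \lesssim \eps^2 \norm{f}_{H^{s+3}(\Sigma)}^2$, so Corollary \ref{cor:estimate1} at $q = s-2$ yields \eqref{eq:Qest}. Similarly, $\Rr := u - \eps v - \eps^2 w$ satisfies
\begin{equation*}
    \Box_c \Rr = \b\,\p_t^2\bigl(u^2 - \eps^2 v^2\bigr) = \b\,\p_t^2\bigl[\Qq\,(u + \eps v)\bigr],
\end{equation*}
with zero boundary and initial data; the factorization on the right-hand side is what supplies the cubic smallness. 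Combining the $\eps^2$ bound on $\norm{\Qq}_{E^{s-1}(M)}$ just obtained with the $\eps$ bound on $\norm{u + \eps v}_{E^{s-1}(M)}$ through the algebra property of $E^{s-1}(M)$ (valid since $s-1 \geq 2$) gives
\begin{equation*}
    \norm{\Qq(u + \eps v)}_{E^{s-1}(M)} \lesssim \eps^3 \norm{f}_{H^{s+3}(\Sigma)}^3,
\end{equation*}
and a final application of the linear estimate delivers \eqref{eq:Rest}.

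The main technical point I expect is the careful bookkeeping of the two derivatives lost when passing the source $\b\,\p_t^2(\cdot)$ through the hypotheses of Corollary \ref{cor:estimate1}, matched against the Sobolev regularity available for $\b$: this is precisely what dictates the hypothesis $\norm{\b}_{H^{s+1}(\Omega)} \leq C_\b$ and forces the remainders to be stated at regularity $E^{s-1}(M)$ rather than at the maximal regularity of $u$. Once the fixed $s > 9/2$ is in place, Sobolev embedding and the algebra property of $E^q(M)$ for $q \geq 2$ take care of all the multiplier estimates needed in the product terms, so the reductions go through in a routine manner.
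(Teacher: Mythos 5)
Your proposal follows essentially the same route as the paper's proof: existence of $u$ from the cited nonlinear well-posedness, repeated use of Corollary \ref{cor:estimate1} together with the algebra property of $E^q(M)$ for $v$, $w$, and $\Qq$, and the factorization $u^2 - (\eps v)^2 = \Qq\,(u+\eps v)$ to extract the cubic smallness of $\Rr$. The only (cosmetic) difference is that you track $\Qq$ in $E^{s-1}(M)$ in the final step where the paper writes $E^{s}(M)$; both versions carry the same derivative bookkeeping, so this is the paper's argument in all essentials.
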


\begin{proof}
Uniqueness of the solution $u \in E^s(M)$ and the corresponding estimate \eqref{eq:est_u_epsf} follow from the discussion in Section \ref{sec:intro} combined with the a priori bounds on $c,\b$, where it can be observed from the proof of \cite{acosta2022nonlinear}[Theorem 2.1] (in particular, from the requirement that conditions (B1)-(B3) in \cite{dafermos1985energy} are satisfied) that $\norm{\b}_{H^{s+1}(\Omega)} \leq C_\b$ suffices. 

The fact that $v \in E^{s+3}(\M), \, \p_{\nu} v|_{\Sigma} \in H^{s+2}(\Sigma)$ with estimate \eqref{eq:est_v} immediately follows from Corollary \ref{cor:estimate1}. As a consequence, again using Corollary \ref{cor:estimate1}, we get that $w \in E^{s+2}(\M), \, \p_{\nu} w|_{\Sigma} \in H^{s+1}(\Sigma)$ with estimate
\begin{align*}
    \norm{w}_{E^{s+2}(M)} + \norm{\p_{\nu} w|_\Sigma}_{H^{s+1}(\M)} &\lesssim \norm{\Box_c w}_{E^{s+1}(M)} \\
    &\lesssim \norm{\b}_{H^{s+1}(\Omega)} \norm{v}_{E^{s+3}(M)}^2 \\ 
    &\lesssim \norm{f}_{H^{s+3}(\Sigma)}^2,
\end{align*}
where we recall that that $E^q(M), \, q \geq 2$ is an algebra.

Regarding expansion \eqref{eq:exp}, we note that $\Qq$ satisfies
\begin{align*}
    \begin{dcases}
    \Box_c \Qq - \b \p_t^2 u^2 = 0, &\text{ in } \M, \\
    \Qq \equiv 0, &\text{ on } \Sigma, \\
    \Qq = \p_t \Qq \equiv 0, &\text{ on } \Ms.
    \end{dcases}
\end{align*}
Using Corollary \ref{cor:estimate1} again, it follows that 
\begin{align*}
    \norm{\Qq}_{E^{s-1}(M)} + \norm{\p_{\nu} \Qq|_{\Sigma}}_{H^{s-2}(M)} &\lesssim \norm{\Box_c \Qq}_{E^{s-2}(M)} \\
    &\lesssim \norm{\b}_{H^{s-2}(\Omega)} \norm{u}_{E^{s}(M)}^2 \\
    &\lesssim \eps^2 \norm{f}_{H^{s+3}(\Sigma)}^2.
\end{align*}

The remainder term $\Rr$ satisfies
\begin{align*} 
    \begin{dcases}
    \Box_c \Rr = \b \p_t^2 (u^2 - (\eps v)^2) = \b \p_t^2 (\Qq \Ss), &\text{ in } \M, \\
    \Rr \equiv 0, &\text{ on } \Sigma, \\
    \Rr = \p_t \Rr \equiv 0, &\text{ on } \Ms,
    \end{dcases}
\end{align*}
where $\Ss := u + \eps v$, with $\norm{\Ss}_{E^s(M)} \lesssim \norm{u}_{E^s(M)} + \norm{\eps v}_{E^s(M)} \lesssim \eps \norm{f}_{H^{s+3}(\Sigma)}$ from estimates \eqref{eq:est_u_epsf}, \eqref{eq:est_v}.
As a consequence, we have that 
\begin{align*}
    \norm{\Rr}_{E^{s-1}(M)} + \norm{\p_{\nu} \Rr|_{\Sigma}}_{H^{s-2}(\Sigma)} &\lesssim \norm{\Box_c \Rr}_{E^{s-2}(M)} \\
    &\lesssim \norm{\b}_{H^{s-2}(\Omega)} \norm{\Qq}_{E^{s}(M)} \norm{\Ss}_{E^{s}(M)} \\
    &\lesssim \eps^3 \norm{f}_{H^{s+3}(\Sigma)}^3,
\end{align*}
from Corollary \ref{cor:estimate1} and the derived estimates for $\norm{\Qq}_{E^s(M)}, \, \norm{\Ss}_{E^s(M)}$. 
\end{proof}

We thus observe that the first- and second-order linearizations of the DN map are given by
\begin{align*}
    \p_{\eps} \Lambda(\eps f) = \p_{\nu} v|_{\Sigma}, \quad \p_{\eps}^2 \Lambda(\eps f) = \p_{\nu} w|_{\Sigma},
\end{align*}
respectively. We note that the first-order DN map allows for the recovery of $c$, and that the second-order DN map allows for the recovery of $\b$ due to the second-order nature of the nonlinearity in equation \eqref{eq:wv}. We also emphasize that the second-order linearization incorporates knowledge of the first-order linearization through equation \eqref{eq:lin2}. The following lemma allows us to recover the first- and second-order DN maps from the (full) DN map $\Lambda$ provided we appropriately control the remainder terms in Lemma \ref{prop:hol} through $f$.

\begin{lemma} \label{lem:DN12}
    Suppose the hypotheses of Lemma \ref{prop:hol} hold. Let $\Lambda_{\ind}$ be the DN map corresponding to $c=c_\ind, \b=\b_{\ind}$, and let $\delta$ be such that inequality \eqref{eq:DNdelta} holds for all $\eps f \in H_{0,\eps_0}^{s+3}(\Sigma)$. 
    Then
    \begin{align*}
        \norm{(\p_{\eps} \Lambda_{1} - \p_{\eps} \Lambda_{2})(\eps f)}_{H^{s-2}(\Sigma)} &\lesssim \eps^{-1} \delta + \eps \norm{f}_{H^{s+3}(\Sigma)}^2, \\ 
        \norm{(\p_{\eps}^2 \Lambda_{1} - \p_{\eps}^2 \Lambda_{2})(\eps f)}_{H^{s-2}(\Sigma)} &\lesssim \eps^{-2} \delta + \eps \norm{f}_{H^{s+3}(\Sigma)}^3,
    \end{align*}
    for any $f \in H_{0}^{s+3}(\Sigma)$ such that $\eps f \in H_{0,\eps_0}^{s+3}(\Sigma)$.
\end{lemma}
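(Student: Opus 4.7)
The plan is to isolate the first- and second-order linearizations by inserting the two expansions provided by Lemma \ref{prop:hol}, applied to the boundary data $\pm\eps f$, into the definition of $\Lambda_\ind$. For each $\ind=1,2$, taking the normal trace at $\Sigma$ in \eqref{eq:exp} will give the identities
\begin{align*}
\Lambda_\ind(\eps f) \,=\, \eps\,\p_\nu v_\ind|_\Sigma + \p_\nu \Qq_\ind|_\Sigma \,=\, \eps\,\p_\nu v_\ind|_\Sigma + \eps^2\,\p_\nu w_\ind|_\Sigma + \p_\nu \Rr_\ind|_\Sigma,
\end{align*}
with the two remainder traces bounded by \eqref{eq:Qest} and \eqref{eq:Rest}. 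Since $\norm{-\eps f}_{H^{s+3}(\Sigma)}=\norm{\eps f}_{H^{s+3}(\Sigma)}$ and the vanishing conditions at $t=0$ are preserved by $f\mapsto -f$, Lemma \ref{prop:hol} applies equally well to $-\eps f$, which will be the crucial observation for the second estimate.

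For the first-linearization bound, I would subtract the first identity across $\ind=1,2$, solve for $\p_\nu(v_1-v_2)|_\Sigma$, and take the $H^{s-2}(\Sigma)$ norm to obtain
\begin{align*}
\eps\,\norm{\p_\nu(v_1-v_2)|_\Sigma}_{H^{s-2}(\Sigma)} \leq \norm{(\Lambda_1-\Lambda_2)(\eps f)}_{H^{s-2}(\Sigma)} + \sum_{\ind=1}^2 \norm{\p_\nu \Qq_\ind|_\Sigma}_{H^{s-2}(\Sigma)}.
\end{align*}
The first term is $\leq\delta$ by \eqref{eq:DNdelta} and the second is $\lesssim\eps^2\norm{f}_{H^{s+3}(\Sigma)}^2$ by \eqref{eq:Qest}, so dividing by $\eps$ produces the first claim.

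The second-linearization bound is more delicate because the leading $\eps\,\p_\nu v_\ind|_\Sigma$ term cannot be absorbed into the $\eps^3\norm{f}^3$ remainder after dividing by $\eps^2$ — doing so would produce an unwanted $\norm{f}^2$ contribution, larger than the target $\eps\norm{f}^3$. The trick I would use is a parity argument in $f$: by linearity of \eqref{eq:lin1} the solution $v_\ind$ is odd in the boundary data, while the source $\b_\ind\p_t^2 v_\ind^2$ of \eqref{eq:lin2} is invariant under $v_\ind\mapsto -v_\ind$, so $w_\ind$ is even in the boundary data. Therefore applying Lemma \ref{prop:hol} to $-\eps f$ will yield $\Lambda_\ind(-\eps f)=-\eps\,\p_\nu v_\ind|_\Sigma+\eps^2\,\p_\nu w_\ind|_\Sigma+\p_\nu\Rr_\ind'|_\Sigma$ with the same $v_\ind,w_\ind$ as before and $\norm{\p_\nu\Rr_\ind'|_\Sigma}_{H^{s-2}(\Sigma)}\lesssim\eps^3\norm{f}_{H^{s+3}(\Sigma)}^3$. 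Summing the two expansions for each $\ind$ cancels the linear term and isolates $2\eps^2\,\p_\nu w_\ind|_\Sigma$; subtracting across $\ind=1,2$, applying the triangle inequality with \eqref{eq:DNdelta} used twice and \eqref{eq:Rest} used for both $\Rr_\ind$ and $\Rr_\ind'$, and dividing by $2\eps^2$, will then give the second claim. I expect the parity observation to be the only non-routine step; the rest is bookkeeping via the triangle inequality.
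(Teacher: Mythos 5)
Your proof is correct. The first-order estimate is argued exactly as in the paper: subtract the expansions $\Lambda_\ind(\eps f)=\eps\,\p_\nu v_\ind|_\Sigma+\p_\nu\Qq_\ind|_\Sigma$, apply \eqref{eq:DNdelta} and \eqref{eq:Qest}, and divide by $\eps$. For the second-order estimate you take a mildly different route: the paper isolates $w$ via the one-sided second difference $D_\eps^2 u_\ind = 2\eps^{-2}(u_{\ind,\eps f}-2u_{\ind,\frac{\eps}{2}f})$, writing $w=D_\eps^2 u - D_\eps^2\Rr$ and then invoking \eqref{eq:DNdelta} at the two data points $\eps f$ and $\tfrac{\eps}{2}f$ together with \eqref{eq:Rest}; you instead use the symmetric second difference at $\pm\eps f$, justified by the parity observation that $v$ is odd and $w$ is even in the boundary datum. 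Both are legitimate: your variant requires Lemma \ref{prop:hol} and inequality \eqref{eq:DNdelta} to apply at $-\eps f$, which they do since $\norm{-\eps f}_{H^{s+3}(\Sigma)}=\norm{\eps f}_{H^{s+3}(\Sigma)}\le\epsf$ and the compatibility conditions at $t=0$ are preserved under $f\mapsto -f$, and it yields the same powers of $\eps$ and $\delta$ with comparable constants. The parity cancellation you highlight is precisely the mechanism hidden inside the paper's divided-difference identity, so your version arguably makes the argument more transparent, at the cost of evaluating the DN maps at data of both signs rather than at two positive multiples of $f$; neither choice affects the stated conclusion.
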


\begin{proof}
Let $u_{\ind,\eps f}, v_{\ind,\eps f}, w_{\ind,\eps f}, \Qq_{\ind,\eps f}, \Rr_{\ind,\eps f}$ be as defined in Lemma \ref{prop:hol} corresponding to equation \eqref{eq:wv} with $c=c_\ind, \b=\b_{\ind}, \, u|_{\Sigma} = \eps f$, and recall that $u_{\ind,\eps f} = \eps v_{\ind,\eps f} + \Qq_{\ind,\eps f} = \eps v_{\ind,\eps f} + \eps^2 w_{\ind,\eps f} + \Rr_{\ind,\eps f}$. 
We then readily obtain the estimate for the first-order DN map as
\begin{align*}
    \norm{(\p_{\eps} \Lambda_{1} - \p_{\eps} \Lambda_{2})(\eps f)}_{H^{s-2}(\Sigma)}
    &\leq \eps^{-1} \left( \norm{(\Lambda_1 - \Lambda_2)(\eps f)}_{H^{s-2}(\Sigma)} + \norm{\p_{\nu} (\Qq_{1,\eps f} - \Qq_{2,\eps f})|_{\Sigma}}_{H^{s-2}(\Sigma)} \right) \\
    &\lesssim \eps^{-1} \delta + \eps \norm{f}_{H^{s+3}(\Sigma)}^2.
\end{align*}
where we used estimate \eqref{eq:Qest}. Regarding the estimate for the second-order DN map, we define
\begin{align*}
    D_{\eps}^2 u_{\ind} &:= 2 \eps^{-2} (u_{\ind,\eps f} - 2 u_{\ind, \frac{\eps}{2}  f}), \\
    D_{\eps}^2 \Rr_{\ind} &:= 2 \eps^{-2} (\Rr_{\ind,\eps f} - 2 \Rr_{\ind,\frac{\eps}{2}  f}),
\end{align*}
and note that
\begin{align*}
    w = D_{\eps}^2 u - D_{\eps}^2 \Rr,
\end{align*}
as in, e.g., \cite{lassas2020uniqueness}. 
Therefore we get that
\begin{align*}
    \norm{(\p_{\eps}^2 \Lambda_{1} - \p_{\eps}^2 \Lambda_{2})(\eps f)}_{H^{s-2}(\Sigma)} 
    &\leq \norm{D_{\eps}^2 (\Lambda_1 - \Lambda_2)(\eps f)}_{H^{s-2}(\Sigma)} + \norm{\p_{\nu} (D_{\eps}^2 \Rr_1 - D_{\eps}^2 \Rr_2)|_{\Sigma}}_{H^{s-2}(\Sigma)} \\
    &\lesssim \eps^{-2} \delta + \eps \norm{f}_{H^{s+3}(\Sigma)}^3.
\end{align*}
from estimate \eqref{eq:Rest}.
\end{proof}

We will first focus on the recovery of $\b$ given $c$, i.e., proving Proposition \eqref{thm:stabb1}, for which we will derive an Alessandrini-type identity for $\b$. To this end, we also need a measurement function $\v$, which we define to be the solution to the following backward acoustic wave equation
\begin{align} \label{eq:lin0}
    \begin{dcases}
    \Box_c \v = 0, &\text{ in } \M, \\
    \v = \ff, &\text{ on } \Sigma, \\ 
    \v = \p_t \v \equiv 0, &\text{ on } \Me,
    \end{dcases}
\end{align}
with $\ff \in H_T^{s+3}(\Sigma)$.
From integration by parts, we then get that
\begin{align} \label{eq:ibp}
    \int_{\Sigma} (\p_{\nu} w)|_{\Sigma} \ff \, \d S \d t &= \int_\M \Delta w \v \, \d x \d t + \int_\M \nabla w \cdot \nabla \v \, \d x \d t \nonumber \\
    &= \int_\M \left( c^{-2} \p_t^2 w - \Box_c w \right) \v \, \d x \d t - \int_\M w \Delta \v \, \d x \d t \nonumber \\
    &= \int_\M w \Box_c \v \, \d x \d t - \int_{M} (\Box_c w) \v \, \d x \d t \nonumber \\
    &= - \int_{M} (\Box_c w) \v \, \d x \d t.
\end{align}
Hence, we get the following Alessandrini-type identity
\begin{align} \label{eq:wv_alessandrini}
    \int_{\Sigma} \p_{\eps}^2 \Lambda (\eps f) \ff \, \d S \d t 
    = \int_{M} \b \p_t v^2 \p_t \v \, \d x \d t.
\end{align}
In other words, the quantity $\int_{\M} \b \p_t v^2  \p_t \v \, \d x \d t$ can be obtained from knowledge of the second-order DN map. From this quantity $\b$ can be recovered by appropriately choosing $f,\ff$ such that the product $\p_t v^2 \p_t \v$ is dense in $L^1(\M)$.

\section{Construction of Gaussian beams for the linear acoustic wave equation} \label{sec:GB}

In this section, we construct approximate solutions for $v$ and $\v$ in the derived Alessandrini-type identity \eqref{eq:wv_alessandrini}. This is done in a geometric setting, by constructing solutions on the Lorentzian manifold $(M,\gL), \, \gL := -\d t^2 + g$. For the approximate solutions, Gaussian beams will be used. To account for the construction of the Gaussian beams up to the boundary $\Sigma$, the Gaussian beams are constructed on a slight extension of $(M,\gL)$ denoted by $(\Mext, \gLextt), \gLextt = -d t^2 + \gextt$, where $\gextt$ is a smooth, positive extension of $g$ by extending $c$ smoothly while ensuring $c > 0$. We denote the corresponding Riemannian extension by $(\Omegaext,\tilde{g})$. For ease of notation, we will drop the tilde denoting the extended metrics as this is clear from the context. Moreover, we will slightly abuse notation and denote $c(\pi_{\Omegaext}(q)),\b(\pi_{\Omegaext}(q)), \, q \in M$ by $c(q),\b(q)$, where $\pi_{\Omegaext}: \Mext \rightarrow \Omegaext$ is the projection down to $\Omegaext$. 

Gaussian beams are constructed in a tubular region around a null geodesic $\thetaext$ in $(\Mext,\gLext)$. We take $\thetaext$ to be the null geodesic corresponding to some unit-speed geodesic $\gammaext$ in $(\Omegaext,\gext)$, i.e.,
\begin{align*}
    \thetaext(t) = (t,\gammaext(t)), \quad t_- - \tilde{\qq} < t < t_+ + \tilde{\qq},
\end{align*}
with $t_\pm$ such that $(\gammaext(t_\pm), \dot{\gammaext}(t_\pm)) \in S_{\pm}^* \p \Omega$, and with $\tilde{\qq} > 0$ small enough such that $\thetaext \in \Mext$. The construction of the Gaussian beams will be discussed in the context of $P v = 0$ rather than for $\Box_c v = 0$, as $P$ is more naturally related to $\Box_{\bar{g}}$ through equation \eqref{eq:P-BoxgL}.

\subsection{Fermi coordinates}
It is natural to construct the Gaussian beams in Fermi coordinates $(z^\mu) = (z^0 = \zz,z')$, which are coordinates centered around the null geodesic $\thetaext$ of interest and for which the metric $\gL$ is locally flat around $\thetaext$. We introduce Fermi coordinates for the setting considered in this work, and we refer to, e.g., \cite{feizmohammadi2022recovery}, for a more general discussion.

Let $q$ be an arbitrary point on $\thetaext$, i.e., $q = \thetaext(t)$ for some $t_- - \tilde{\qq} < t < t_+ + \tilde{\qq}$, and define $e_0(q) := \dot{\thetaext}(t)$. Since $\thetaext$ is a null geodesic, $\ip{e_0(q)}{e_0(q)}_{\gL} = 0$ needs to hold, and we set
\begin{align*}
    e_0(q) = \frac{1}{\sqrt{2}}(1,c(q),0,0).
\end{align*}
By additionally setting
\begin{align*}
    e_1(q) = \frac{1}{\sqrt{2}}(-1,c(q),0,0), \quad e_2(q) = (0,0,c(q),0), \quad e_3(q) = (0,0,0,c(q)),
\end{align*}
we have created a pseudo-orthonormal basis $(e_\mu(q))$ on $T_q \tilde{M}$ as $\ip{e_\mu(q)}{e_\nu(q)}_{\gL}$ is only non-zero in the cases $\ip{e_0(q)}{e_1(q)}_{\gL} = \ip{e_k(q)}{e_k(q)}_{\gL} = 1, \, k=2,3$. Since $q$ was an arbitrary point on $\thetaext$, these local bases can be concatenated along $\thetaext$, giving the Fermi coordinate system around $\thetaext$. More precisely, let $p = \thetaext(t_-)$ be the reference point corresponding to $(s_-,0)$ in Fermi coordinates $(s,z')$. Then Fermi coordinates around $\thetaext$ are constructed by a parallel transport of the pseudo-orthonormal basis $(e_\mu(p))$ along $\thetaext$, i.e.,
\begin{align*}
    (s,z') := \Ff^{-1}(t,x),
\end{align*}
where $\Ff: \Mext \rightarrow \Mext $ is defined as
\begin{align*}
    \Ff(s, z') := \exp_{\thetaext(s)} (z^i E_i(s)),
\end{align*}
where $\exp_p: T_p \tilde{\M} \rightarrow \tilde{\M}$ is the exponential map on $\tilde{\M}$ at $p$, and where $E_\mu(s) \in T_{\thetaext(s)} \tilde{M}$ denotes the parallel transport of $e_{\mu}(p)$ along $\thetaext$ to $\thetaext(s)$. Since we know that $E_\mu(s) = e_\mu(\thetaext(s))$, we get that
\begin{align*}
    \d s(t,x) &= \frac{1}{\sqrt{2}} (\d t + c(x) \d x^1 ), \\
    \d z^1(t,x) &= \frac{1}{\sqrt{2}} (-\d t + c(x) \d x^1 ), \\
    \d z^k(t,x) &= c(x) \d x^k, \quad k=2,3,
\end{align*}
and thus we have that
\begin{align*}
    \d t(s,z') &= \frac{1}{\sqrt{2}} (\d s - \d z^1), \\
    \d x^1(s,z') &= \frac{1}{\sqrt{2}} c^{-1}(s,z') (\d s + \d z^1), \\
    \d x^k(s,z') &= c^{-1}(s,z') \d z^k, \quad k=2,3.
\end{align*}
Moreover, we observe that $\thetaext(s) = (s,0)$, and we note that the constructed Fermi coordinates are only well-defined in
\begin{align*}
    V_{\qq} := \{ (\zz, z') \in \Mext : \zz \in ( \zz_- - \qq, \zz_+ + \qq), \, \norm{z'} < \qq \},
\end{align*}
with $\qq > 0$ small enough to guarantee injectivity of $\Ff$ in $V_{\qq}$, where $s=s_+$ is such that $\thetaext(s_+) = \thetaext(t_+)$. Moreover, we assume $\qq>0$ is small enough such that $V_{\qq} \subset \Mext$, and we assume without loss of generality that $0 < \qq \leq 1$.

From the above construction, we immediately see that the metric $\bar{g}$ on $\tilde{\t}$ in Fermi coordinates is given by
\begin{align} \label{eq:gbar}
    \gLext(\zz,z')|_{\thetaext} &= 2 \d \zz \d z^1 + (\d z^2)^2 + (\d z^3)^2,
\end{align}
and that the first-order terms of the metric vanish around $\thetaext$ \cite{feizmohammadi2022recovery}[Lemma 1], i.e.,
\begin{align} \label{eq:gbar_prop}
    \frac{\p \gLext_{\mu \nu}}{\p z^\s} \bigg|_{\thetaext} &= 0. 
\end{align}
Moreover, since the metric is smooth, we get from equations \eqref{eq:gbar}, \eqref{eq:gbar_prop} that $\gL$ can be expanded around $\thetaext$ in $V_{\qq}$ as
\begin{align} \label{eq:gbarVrho}
\gL(s,z') = 2 \d z^0 \d z^1 + (\d z^2)^2 + (\d z^3)^2 + \sum_{m=2}^{\infty} (G_{\mu \nu}^m(s))_{i_1 i_2 \dots i_m} z^{i_1} z^{i_2} \cdots z^{i_m} \d z^\mu \d z^\nu,
\end{align}
with $G_{\mu \nu}^{m}(s)$ the coefficients of order $m$ of the metric. Since the construction of the Gaussian beam solutions for $P v = 0$ below is done in Fermi coordinates around $\thetaext$, we use the notation $\p_\mu := \p_{z^\mu}$ and drop the dependence of the metric $\bar{g}$ on $(s,z')$ throughout this discussion for ease of notation.

\subsection{WKB approximation}
We will construct a Gaussian beam solution for $P v = 0$ through the WKB ansatz, i.e.,
\begin{align} \label{eq:WKB}
    v_{\pp} = a_{\pp} e^{\i \pp \phi}, \quad \pp \geq 1,
\end{align}
with $\phi, a_{\pp} \in \Cc^{\infty}(\Mext)$, giving
\begin{align} \label{eq:P_WKB}
    P v_{\pp} &= ( -\pp^2 a_{\pp} \Hh \phi + \i \pp \Aa a_{\pp} + \Box_{\gL} a_{\pp} + \ip{\d \log c}{\d a_{\pp}}_{\gL}) e^{\i \pp \phi},
\end{align}
where $\Hh, \Aa: \Cc^{\infty}(\Mext) \rightarrow \Cc^{\infty}(\Mext)$ are the eikonal operator and the (acoustic) transport operator, respectively, which are defined as
\begin{align*}
    \Hh \phi := \ip{\d \phi}{\d \phi}_{\gL}, \quad
    \Aa a_{\pp} := 2 \ip{\d a_{\pp}}{\d \phi}_{\gL} + (\Box_{\gL} \phi + \ip{\d \log c}{\d \phi}_{\gL}) a_{\pp}.
\end{align*}
We will focus on solving $P v_{\pp} = 0$ in the tubular region $V_{\qq}, \, 0 < \qq \leq 1$ around the null geodesic $\thetaext$ considered where the Fermi coordinates $(s,z')$ are well-defined, and we take as ansatz
\begin{align}
    \phi(\zz,z') &= \sum_{l=0}^N \phi_l(\zz,z'), \quad \phi_l(s,z') = \Phi_{i_1 i_2 \dots i_l}^l(s) z^{i_1} z^{i_2} \dots z^{i_l},
    \label{eq:phi} \\
    a_{\pp}(\zz,z') &= \sum_{m=0}^N \pp^{-m} a_m(\zz,z'), \quad a_m(\zz,z') := \sum_{l=0}^N a_{m,l} (\zz,z'), \quad a_m = A_{i_1 i_2 \dots i_l}^{m,l}(s) z^{i_1} z^{i_2} \cdots z^{i_l}, 
    \label{eq:a_rho}
\end{align}
with smooth, symmetric (for $l \geq 2$) coefficients $\Phi_{i_1 i_2 \cdots i_l}^l(s) \in \CC^{n \times n}$, $A_{i_1 i_2 \cdots i_l}^{m,l}(s) \in \CC^{n \times n}$ for the phase function $\phi$ and the amplitude function $a_{\pp}$, respectively. From equations \eqref{eq:P_WKB}, \eqref{eq:phi}, \eqref{eq:a_rho}, we thus observe that solving $P v_{\pp} = 0$ is equivalent to solving
\begin{align}
    \Hh \phi &= 0, \label{eq:eikonal} \\
    \Tt a_0 &= 0, \label{eq:tr1} \\
    \i \Tt a_m &= - \Box_{\gL} a_{m-1} - \ip{\d (\log c)}{\d a_{m-1}}_{\gL}, \quad m \geq 1, \label{eq:tr2}
\end{align}
in this order, since solving for $a_0$ requires $\phi$, and solving for $a_m, \, m \geq 1$ requires $a_{m-1}$. 
We then define $v_{\pp,\qq}^{(N)}, \, \pp \geq 1, \, 0 < \qq \leq 1$, to be an approximate Gaussian beam solution around $\thetaext$ for $P v = 0$ of order $N=(N_0,N_1,N_2)$ if 
\begin{enumerate}[label=(\roman*)]
    \item equations \eqref{eq:eikonal}, \eqref{eq:tr1}, \eqref{eq:tr2} are solved up to order $N_0,N_1,N_2$ in $z$, respectively,
    \item $\Im(\phi)|_{\thetaext} = 0$,
    \item $\norm{z'}^2 \lesssim \Im(\phi)(s,z')$ for all $(s,z') \in V_{\qq}$,
\end{enumerate}
where the first condition ensures that the linear acoustic equation is approximately solved, where the second condition corresponds to the constant energy of the wave (i.e., no dissipation or absorption of energy), and where the third condition enforces the Gaussian beam solution to concentrate around $\tilde{\t}$. Moreover, we additionally multiply the constructed Gaussian beam with a smooth cut-off function $\x_{\qq}$ for which $\x_{\qq} = 1$ on $\thetaext$ and $\x_{\qq} = 0$ for $\norm{z'} \geq \qq$ to ensure that we stay in $V_{\qq}$, giving that
\begin{align*}
    v_{\pp,\qq}^{(N)} = \x_{\qq} a_{\pp}^{(N_1,N_2)} e^{\i \pp \phi^{(N_0)}}, \quad \phi^{(N_0)} := \sum_{l=0}^{N_0} \phi_l, \quad a_m^{(N_1,N_2)} := \sum_{l=0}^{N_1} a_{m,l}, \, m=0,1,\dots,N_2,
\end{align*}
where we note that the inclusion of a cut-off function $\x_{\qq}$ can be done without loss of generality as the Gaussian beam solutions are concentrated around $\thetaext$ by condition (iii), and as the limit $\qq \to 0$ will be taken later.

The following result shows that the constructed solution indeed approximately solves $P v = 0$, provided $N_k, k=0,1,2$ are large enough. The result is similar to \cite{feizmohammadi2022recovery}[Lemma 2].

\begin{lemma} \label{lem:WKB_convergence}
    Consider a null geodesic $\t$ in $(M,\gL)$. Let $v_{\pp,\qq}^{(N)}, \, \pp \geq 1, \, 0 < \qq \leq 1$, be an approximate Gaussian beam around $\theta$ for $P v = 0$ of order $N=(N_0,N_1,N_2)$ with $N_0 \leq N_1 + 2$, $N_1 \leq 2 N_2 + 1$, and let $r_{\pp,\qq}^{(N)}$ be the corresponding residual, i.e., $r_{\pp,\qq}^{(N)} := v - v_{\pp,\qq}^{(N)}$. Then 
    \begin{align*}
    \norm{r_{\pp,\qq}^{(N)}}_{H^{q+1}(M)} \lesssim \norm{P v_{\pp,\qq}^{(N)}}_{H^q(M)} \lesssim \pp^{\frac{1}{4}(4 q - 2 N_0 + 3)} , \quad \norm{r_{\pp,\qq}^{(N)}}_{L^2(M)} \lesssim \pp^{\frac{1}{4}(- 2 N_0 + 3)},
    \end{align*}
    for any $q \geq 0$.
\end{lemma}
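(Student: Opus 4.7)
The plan is to couple an energy estimate for the linear wave equation with a direct computation of $P v_{\pp,\qq}^{(N)}$ controlled by Gaussian weights. First I would observe that $r_{\pp,\qq}^{(N)}$ satisfies $P r_{\pp,\qq}^{(N)} = - P v_{\pp,\qq}^{(N)}$ in $\M$ with zero Cauchy data on $\Ms$ and zero Dirichlet data on $\Sigma$, where $v$ is defined as the solution of $P v = 0$ whose Dirichlet trace on $\Sigma$ and Cauchy data on $\Ms$ match those of $v_{\pp,\qq}^{(N)}$. The Cauchy traces vanish automatically once $\thetaext$ is chosen with $t_- - \tilde{\qq} > 0$ and $t_+ + \tilde{\qq} < T$, as the cut-off $\x_{\qq}$ then confines $v_{\pp,\qq}^{(N)}$ away from $\Ms$ and $\Me$. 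Applying Corollary \ref{cor:estimate1} and using the crude bound $\sum_{k=0}^q \norm{\p_t^k F}_{L^1([0,T]; H^{q-k}(\Omega))} \lesssim \norm{F}_{H^q(\M)}$ together with the embedding $E^{q+1}(\M) \hookrightarrow H^{q+1}(\M)$, the first bound $\norm{r_{\pp,\qq}^{(N)}}_{H^{q+1}(\M)} \lesssim \norm{P v_{\pp,\qq}^{(N)}}_{H^q(\M)}$ follows; the $L^2(\M)$ bound is the same estimate at $q = 0$.

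Next I would compute $P v_{\pp,\qq}^{(N)}$ via \eqref{eq:P_WKB}. Substituting $a_{\pp} = \sum_{m=0}^{N_2} \pp^{-m} a_m^{(N_1,N_2)}$ and grouping by powers of $\pp$ yields three types of interior residuals together with a cut-off commutator: eikonal residues of the form $\pp^{2-m} a_m \Hh \phi^{(N_0)}$, which vanish to order $N_0 + 1$ in $\norm{z'}$ by the construction of $\phi^{(N_0)}$ (largest at $m = 0$, weighted by $\pp^2$); transport residues of the form $\pp^{1-m}(\i \Aa a_m + \Box_{\gL} a_{m-1} + \ip{\d \log c}{\d a_{m-1}}_{\gL})$ with the convention $a_{-1} \equiv 0$, which vanish to order $N_1 + 1$ in $\norm{z'}$ (largest at $m = 0$, weighted by $\pp^1$); the uncompensated tail $\pp^{-N_2}(\Box_{\gL} a_{N_2} + \ip{\d \log c}{\d a_{N_2}}_{\gL})$ of size $O(1)$; and commutator terms involving $\p_\mu \x_{\qq}$, supported on $\norm{z'} \gtrsim \qq$ and hence exponentially small in $\pp$ by condition (iii).

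Then I would estimate each group using the Gaussian decay from condition (iii), $\Im \phi \gtrsim \norm{z'}^2$, combined with the rescaling $y = \sqrt{\pp} z'$, which gives
\[ \int_{V_{\qq}} \norm{z'}^{2\ind} |e^{\i \pp \phi}|^2 \, \dd z \lesssim \pp^{-\ind - 3/2} \]
for each $\ind \geq 0$, the $3/2$ reflecting the three transverse directions in Fermi coordinates. Each differentiation in $\M$ contributes at most a factor of $\pp$ in $L^\infty$ via a derivative of $e^{\i \pp \phi}$, so in $H^q(\M)$ the largest eikonal, transport, and tail contributions scale as $\pp^{q + 2 - (N_0+1)/2 - 3/4}$, $\pp^{q + 1 - (N_1+1)/2 - 3/4}$, and $\pp^{q - N_2 - 3/4}$ respectively. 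The eikonal term gives exactly $\pp^{(4q - 2 N_0 + 3)/4}$; the hypothesis $N_0 \leq N_1 + 2$ makes the transport bound no larger, and $N_1 \leq 2 N_2 + 1$ combined with this gives $N_0 \leq 2 N_2 + 3$, which makes the tail no larger.

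The main obstacle is the careful bookkeeping of this residual expansion: verifying that, after $q$ differentiations, no term in the eikonal/transport cascade or from the $\Box_{\gL}$ action on lower amplitudes beats the leading eikonal contribution, and that the cut-off commutator is negligible at every polynomial order in $\pp^{-1}$ (using that $\Im \phi$ is bounded below by a positive constant on $\{\norm{z'} \geq \qq\}$, yielding exponential decay of $|e^{\i \pp \phi}|$ in $\pp$ there). The combinatorial matching of polynomial degree in $\norm{z'}$ against the exponent of $\pp$, term by term, is exactly what the two inequalities on $(N_0, N_1, N_2)$ are designed to guarantee.
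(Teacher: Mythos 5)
Your proposal is correct and follows essentially the same route as the paper: bound $P v_{\pp,\qq}^{(N)}$ pointwise by the eikonal, transport, and tail residues weighted by the Gaussian factor, integrate using $\int_{\norm{z'}<\qq}\norm{z'}^{k}e^{-2C\pp\norm{z'}^2}\,\d z' \lesssim \pp^{-(k+3)/2}$ to see that the two hypotheses on $(N_0,N_1,N_2)$ make the eikonal term dominant, and then transfer the bound to $r_{\pp,\qq}^{(N)}$ via the energy estimate of Corollary \ref{cor:estimate1} applied to $P r_{\pp,\qq}^{(N)} = -P v_{\pp,\qq}^{(N)}$ with vanishing data. Your extra care with the cut-off commutator and the vanishing Cauchy traces is a harmless refinement of the same argument, not a different one.
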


\begin{proof}
Let $\a$ be a 4-dimensional multi-index. Since we solve equations \eqref{eq:eikonal}, \eqref{eq:tr1}, \eqref{eq:tr2} up to order $N_k, \, k=0,1,2$, respectively, we have
\begin{align*}
    |\p^{\a} P v_{\pp,\qq}^{(N)}| \lesssim \pp^{|\a|} |e^{\i \pp \phi^{(N_0)}}| (\pp^2 \norm{z'}^{N_0+1} + \pp \norm{z'}^{N_1+1} + \pp^{-N_2}).
\end{align*}
Let 
\begin{align*}
    \Ii(\norm{z'}^k) := \int_{\norm{z} < \qq} \norm{z'}^k e^{-2C\pp \norm{z'}^2} \d z',
\end{align*}
and note that $\Ii(\norm{z'}^k) \leq \int_0^\infty e^{-2 C \pp r^2} r^{k+2} \d r \lesssim \pp^{-(k+3)/2}, \, k \geq 0$.
Since $|e^{\i \pp \phi^{(N_0)}}| \leq e^{-C\pp \norm{z'}^2}$ by condition (iii), we get
\begin{align*}
    \norm{\p^\a P v_{\pp,\qq}^{(N)}}_{L^2(\M)}^2 &\lesssim \pp^{2|\a|+4} \Ii(\norm{z'}^{2(N_0+1)}) + \pp^{2|\a|+3} \Ii(\norm{z'}^{N_0+N_1+2}) + \pp^{2|\a|+2} \Ii(\norm{z'}^{2(N_1+1)}) \\
    & + \pp^{2|\a|+2-N_2} \Ii(\norm{z'}^{N_0+1}) + \pp^{2|\a|+1-N_2} \Ii(\norm{z'}^{N_1+1}) + \pp^{2|\a|-2 N_2} \Ii(1) \\
    &\lesssim \pp^{2|\a| - N_0 + \frac 3 2},
\end{align*}
provided $N_0 \leq N_1 + 2$, $N_1 \leq 2 N_2 + 1$. We thus get that $\norm{P v_{\pp,\qq}^{(N)}}_{H^{|\a|}(\M)} \lesssim \pp^{\frac{1}{4}(4|\a| - 2 N_0 + 3)}$ for any $|\a| \geq 0$, giving the desired estimate for $\norm{P v_{\pp,\qq}^{(N)}}_{H^q(M)}$ by interpolating Sobolev spaces (see, e.g., \cite{bergh2012interpolation}[Theorem 6.4.5]). Finally, since the remainder $r_{\pp}$ satisfies
\begin{align*}
    \begin{dcases}
    P r_{\pp,\qq}^{(N)} = - P v_{\pp,\qq}^{(N)}, &\text{ in } \M, \\
    r_{\pp,\qq}^{(N)} \equiv 0, &\text{ on } \Sigma, \\
    r_{\pp,\qq}^{(N)} = \p_t r_{\pp,\qq}^{(N)} \equiv 0, &\text{ on } \M_0,
    \end{dcases}
\end{align*}
we get from Corollary \ref{cor:estimate1} that $\norm{r_{\pp,\qq}^{(N)}}_{H^{q+1}(M)} \lesssim \norm{r_{\pp,\qq}^{(N)}}_{E^{q+1}(M)} \lesssim \norm{P v_{\pp,\qq}^{(N)}}_{H^{q}(M)}$ for any $q \geq 0$, and that $\norm{r_{\pp,\qq}^{(N)}}_{L^2(M)} \lesssim \norm{P v_{\pp,\qq}^{(N)}}_{L^2(M)}$.
\end{proof}

We note that an approximate Gaussian beam of order $(2,0,0)$ thus already suffices to solve $Pv = 0$ in $L^2(M)$ as the limit $\pp \to \infty$ will be taken later. Moreover, the following bounds can be derived for $v_{\pp,\qq}^{(N)}$, which we will need later.

\begin{lemma} \label{lem:gb_est} 
Consider a null geodesic $\t$ in $(M,\gL)$. Let $v_{\pp,\qq}^{(N)}, \, \pp \geq 1, \, 0 < \qq \leq 1$, be an approximate Gaussian beam around $\theta$ for $P v = 0$ of order $N$, and let $f_{\pp,\qq}^{(N)} := v_{\pp,\qq}^{(N)}|_{\Sigma}$. Then
\begin{align*}
\norm{f_{\pp,\qq}^{(N)}}_{\Cc^q(\Sigma)} \lesssim \norm{v_{\pp,\qq}^{(N)}}_{\Cc^q(\M)} \lesssim \pp^{q}, 
\quad \norm{v_{\pp,\qq}^{(N)}}_{H^q(\M)} \lesssim \pp^{q - \frac 3 4}, 
\quad \norm{f_{\pp,\qq}^{(N)}}_{H^r(\Sigma)} \lesssim \pp^{r - \frac 1 4},
\end{align*}
for any $q \geq 0, \, r > 0$.
\end{lemma}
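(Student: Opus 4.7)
The plan is to exploit the explicit Gaussian beam form $v_{\pp,\qq}^{(N)} = \x_{\qq} a_{\pp}^{(N_1,N_2)} e^{\i \pp \phi^{(N_0)}}$, together with the concentration property (iii), and track carefully how powers of $\pp$ emerge under differentiation. The argument has three independent steps, in roughly increasing subtlety.

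First, for the $\Cc^q$ estimate, I would apply the Leibniz rule: differentiating the exponential gives $\p_\mu e^{\i \pp \phi^{(N_0)}} = \i \pp (\p_\mu \phi^{(N_0)}) e^{\i \pp \phi^{(N_0)}}$, so iterated differentiation produces terms $\pp^{|\a|} \cdot (\text{bounded derivatives of }\phi^{(N_0)}) \cdot e^{\i \pp \phi^{(N_0)}}$. Condition (ii), together with (iii), gives $|e^{\i \pp \phi^{(N_0)}}| \leq 1$ in $V_\qq$, while $\phi^{(N_0)}, a_\pp^{(N_1,N_2)}, \x_\qq$ are smooth with derivatives bounded independently of $\pp$ on $V_\qq$. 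Summing the finitely many Leibniz terms then yields $\norm{v_{\pp,\qq}^{(N)}}_{\Cc^q(M)} \lesssim \pp^q$, and the trace bound $\norm{f_{\pp,\qq}^{(N)}}_{\Cc^q(\Sigma)} \leq \norm{v_{\pp,\qq}^{(N)}}_{\Cc^q(M)}$ is then immediate from continuity of restriction.

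Second, for the $H^q(M)$ estimate, I would compute $\norm{\p^\a v_{\pp,\qq}^{(N)}}_{L^2(M)}^2$ for an integer multi-index $\a$. By the same Leibniz expansion, each such derivative is dominated by $\pp^{|\a|} e^{-C\pp \norm{z'}^2}$ (invoking condition (iii) for the pointwise bound on the exponential). Working in Fermi coordinates $(s,z')$ with $z'$ three-dimensional, I would use the Gaussian integral estimate $\int_{\norm{z'}<\qq} e^{-2C\pp \norm{z'}^2} \dd z' \lesssim \pp^{-3/2}$, which with the bounded $s$-integration gives $\norm{\p^\a v_{\pp,\qq}^{(N)}}_{L^2(M)}^2 \lesssim \pp^{2|\a| - 3/2}$, so $\norm{v_{\pp,\qq}^{(N)}}_{H^q(M)} \lesssim \pp^{q - 3/4}$ for integer $q$. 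For non-integer $q \geq 0$ the bound then follows by Sobolev interpolation (as in \cite{bergh2012interpolation}[Theorem 6.4.5]), consistently with what is already done in the proof of Lemma \ref{lem:WKB_convergence}.

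Third, for the $H^r(\Sigma)$ estimate, I would simply apply the classical trace inequality $H^{r+1/2}(M) \hookrightarrow H^r(\Sigma)$, which is valid precisely for $r > 0$, to deduce
\[
\norm{f_{\pp,\qq}^{(N)}}_{H^r(\Sigma)} \lesssim \norm{v_{\pp,\qq}^{(N)}}_{H^{r+1/2}(M)} \lesssim \pp^{(r + 1/2) - 3/4} = \pp^{r - 1/4}.
\]
This is where the exponent $-\tfrac{1}{4}$ (as opposed to $-\tfrac{3}{4}$) comes from: one loses half a derivative passing to the trace.

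I do not expect a real obstacle here: the proof is essentially bookkeeping around the Gaussian concentration and a trace inequality, once the Leibniz derivative count is set up. The only point worth flagging is that the implicit constants depend on $\qq$ through the cutoff $\x_\qq$, but $\qq$ is fixed within the statement and absorbed into the $\lesssim$ notation. The exponent in the interior $H^q$ bound is dictated by the three transverse Fermi directions producing a factor $\pp^{-3/2}$ in $L^2$, and the boundary bound follows mechanically from this together with the trace theorem.
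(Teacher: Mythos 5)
Your proposal is correct and follows essentially the same route as the paper: the pointwise Leibniz bound $|\p^\a v_{\pp,\qq}^{(N)}| \lesssim \pp^{|\a|} e^{-C\pp\norm{z'}^2}$ from condition (iii), the Gaussian integral $\int_{\norm{z'}<\qq} e^{-2C\pp\norm{z'}^2}\dd z' \lesssim \pp^{-3/2}$ in the three transverse Fermi directions, Sobolev interpolation for non-integer $q$, and the trace theorem for the boundary estimate. No substantive differences.
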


\begin{proof}
Let $\a$ be a 4-dimensional multi-index. We have
\begin{align*}
    |\p^{\a} v_{\pp,\qq}^{(N)}| \lesssim \pp^{|\a|} |e^{\i \pp \phi^{(N_0)}}|,
\end{align*}
and we recall that $|e^{\i \pp \phi^{(N_0)}}| \leq e^{-C\pp \norm{z'}^2}$ from condition (iii). Hence, we immediately observe the first estimate, and the estimate for $\norm{v_{\pp,\qq}^{(N)}}_{H^q(\M)}$ follows a similar approach as proving the estimate for $\norm{P v_{\pp,\qq}^{(N)}}_{H^q(M)}$ in Lemma \ref{lem:WKB_convergence} by observing that
\begin{align*}
    \norm{\p^\a v_{\pp,\qq}^{(N)}}_{L^2(\M)}^2 \lesssim 
    \pp^{2|\a|} \Ii(1) \lesssim \pp^{2|\a| - \frac 3 2}.
\end{align*}
which gives $\norm{v_{\pp,\qq}^{(N)}}_{H^{|\a|}(\M)} \lesssim \pp^{|\a| - \frac 3 4}$ for any $|\a| \geq 0$, and thus the desired estimate for $\norm{v_{\pp,\qq}^{(N)}}_{H^q(M)}$ by interpolating Sobolev spaces. Finally, by the trace theorem, $\norm{f_{\pp,\qq}^{(N)}}_{H^{r}(\Sigma)} \lesssim \norm{v_{\pp,\qq}^{(N)}}_{H^{r+1/2}(\M)}$.
\end{proof}

\subsection{Construction phase function}
We will construct the phase function for $v_{\pp,\qq}^{(N)}$ by solving equation \eqref{eq:eikonal} up to order $N_0$. Recall that the ansatz for $\phi$ was taken as in equation \eqref{eq:phi}. We construct the first three terms $\Phi^j, j=0,1,2$ explicitly, as we know that $N_0 \geq 2$ for convergence of the Gaussian beam. For convenience of notation, we denote $\dot{\Phi}^j = \p_0 \Phi^j$ and $K = \Phi^0, D = \Phi^1, H = \Phi^2$.

Grouping terms of order $N_0 = 0,1$ in the eikonal equation $\Hh \phi = 0$ gives
\begin{align*}
    2 \dot{K} D_1 + D_2^2 + D_3^2 &= 0, \\
    \dot{D}_i D_1 + \dot{K} H_{i1} + D_2 H_{i2} + D_3 H_{i3} &= 0,
\end{align*}
for which we note that a solution is given by $K = 0,\, D = (1,0,0)$, i.e.,
\begin{align*}
    \phi_0(s,z') = 0, \quad \phi_1(s,z') = z^1. 
\end{align*}

Going to $N_0 = 2$, the second-order term $G^2(s)$ of the metric also comes into play. Grouping the terms for $N_0 = 2$ and imposing $K = 0,\, D = (1,0,0)$, we get
\begin{align*}
    \dot{H}_{ij} + \sum_{k=2,3} H_{i k} H_{j k} + G_{11}^2 = 0, 
\end{align*}
or, written in matrix-form,
\begin{align*}
    \dot{H} + H \C H + \D = 0, 
    \quad H(\zz_-) = H_0,
    \quad \C_{ij} := 
    \begin{dcases}
        1, \quad i=j=2,3, \\
        0, \quad \text{otherwise},
    \end{dcases}
    \quad
    \D := G_{11}^2,
\end{align*}
which is a Ricatti equation. Recall that we require $\Im(H(s)) > 0$ by property (iii), and by taking $\Im(H_0) > 0$ this property is guaranteed through the above Ricatti equation (see, e.g., \cite{kachalov2001inverse}[Lemma 2.56]). Moreover, by setting $H_0 = Z_0 Y_0^{-1}$, the Ricatti equation can be rewritten as a system of linear equations
\begin{align*}
    \dot{Y}(\zz) &= \C Z(\zz), \quad Y(\zz_-) = Y_0, \\
    \dot{Z}(\zz) &= -\D Y(\zz), \quad Z(\zz_-) = Z_0,
\end{align*}
where $H(s) = Z(s) Y(s)^{-1}$. Regarding initial conditions, we set $Y_0 = I$, $Z_0 = \i I$, implying that $\Im(H_0) > 0$. We note that $Y$ satisfies 
\begin{align} \label{eq:Y}
    \ddot{Y} + \C \D Y = 0, \quad Y(\zz_-) = Y_0, \quad \dot{Y}(\zz_-) = \C Z_0, 
\end{align}
which is related to the Jacobi field equation, see, e.g., \cite{acosta2022nonlinear}. Moreover, we have
\begin{align} \label{eq:identity_HY}
    \det(\Im(H(\zz))) \cdot |\det Y(\zz)|^2 = C_{\thetaext},
\end{align}
for some $C_{\thetaext} > 0$ dependent on $\thetaext$ \cite{kachalov2001inverse}[Lemma 2.58]. From this identity and the fact that $\Im(H(s)) > 0$, we also observe that $\det Y(\zz) \neq 0$. We note that $\det Y(\zz) \neq 0$ along $\thetaext$ does not exclude $\thetaext$ from having conjugate points, as this is caused by the fact that we lifted the construction into the complex domain.

Repeating this procedure, the higher-order terms $N_0 = l \geq 3$ can be observed to obey linear ODEs of the form 
\begin{align*}
    \frac{2}{l!} \dot{\Phi}_{i_1 i_2 \cdots i_l}^l + 2 \sum_{k=2,3} \Phi_{i_1 i_2 \cdots i_{l-1} k}^l H_{i_l k} + (G_{11}^l)_{i_1 i_2 \cdots i_l} = (\Ee^{l})_{i_1 i_2 \cdots i_l}, \quad i_1,\dots,i_l=1,2,3, 
\end{align*}
where $\Ee^{l}$ is an $l$-dimensional tensor whose elements consists of products of terms of $G^{i}, \Phi^{i}, \dot{\Phi}^{i}, \, i < l$. We do not need to solve explicitly for these terms, but we note that these terms are bounded by Picard-Lindel\"of since the metric is smooth. Finally, we note that $\phi$ as constructed above satisfies conditions (ii) and (iii).

\subsection{Construction amplitude function}
We will construct the amplitude function for $v_{\pp,\qq}^{(N)}$ by solving equations \eqref{eq:tr1}, \eqref{eq:tr2} up to order $(N_1,N_2)$.
Recall that the ansatz for $a$ was taken as in equation \eqref{eq:a_rho}.
We first focus on the construction of $a_0$ and construct the term $A^{0,0}$ explicitly. For ease of notation, we set $A^{0,j} = A^j,\, j\geq 0$ throughout this discussion.

Using the earlier derived phase function, we first note that
\begin{align*}
    \Box_{\gLext} \phi = \sum_{i,j=0}^3 \bar{g}^{ij} \p_{ij}^2 \phi &= H_{22} + H_{33} + \left[2 \dot{H}_{i1} + \Phi_{i22}^3 + \Phi_{i33}^3 \right] z^i + \dots
\end{align*}
Furthermore
\begin{align*}
    H_{22} + H_{33} = \Tr(\C H) = \Tr(\dot{Y} Y^{-1}) = \Tr\left( \p_0 \log Y  \right) = \p_0 (\log \det Y).
\end{align*}
Grouping terms of order $N_1 = 0$ in equation \eqref{eq:tr1}, we get
\begin{align*}
    \dot{A}^0 = - \frac{1}{2} [H_{22} + H_{33} + \p_0 (\log c)] = - \frac{1}{2} \p_0(\log(c \det Y)) A^0. 
\end{align*}
The solution to this ODE normalized at $p=(s_-,0) = \thetaext(t_-)$ is then given by
\begin{align} \label{eq:zeta0}
    A^0(s) = c(p)^{\frac{1}{2}} c(\thetaext(s))^{-\frac{1}{2}} (\det Y(s))^{-\frac{1}{2}}.
\end{align}
where we recall that we abuse notation and denote $c(\pi_{\Omegaext}(q)), \, q \in M$ by $c(q)$.

Repeating this procedure, we get that the higher-order terms $N_1 = l \geq 1$ satisfy linear ODEs of the form 
\begin{align*}
    \frac{1}{l!} \dot{A}_{i_1 i_2 \cdots i_l}^l + \sum_{k=2,3} A_{i_1 i_2 \cdots i_{l-1} k}^l A_{i_l k}^2 - \p_0(\log (c \det Y)) A_{i_1 i_2 \cdots i_l}^l &= (\Tt^{l})_{i_1 i_2 \cdots i_l}, \quad i_1,i_2,\dots,i_l = 1,2,3,
\end{align*}
where $\Tt^{l}$ is an $l$-dimensional tensor whose elements consists of products of terms of $G^{i},\Phi^{i},\dot{\Phi}^{i},A^j,\dot{A}^j$, $j < i \leq l$. We do not need to solve explicitly for these terms, but we note that these terms are bounded by Picard-Lindel\"of since the metric is smooth.

Regarding $m \geq 1$, although we do not solve explicitly for $A^{m,l}, \, m \geq 1, l \geq 0$, we note that the ODEs governing these terms simply satisfy the above ODE with an additional term $\i P a_{m-1}$ on the right-hand side, and thus additionally depend on $A^{m-1,j}, \, j \leq l$. Again by Picard-Lindel\"of, we can thus conclude that these terms remain bounded.

\section{Proof of Proposition \ref{thm:stabb1}} \label{sec:thm1}

We take the constructed approximate Gaussian beam of order $N$ as solution for $v$, i.e.,
\begin{align} \label{eq:GBu12}
    v = \x_{\qq} a_{\pp} e^{\i \pp \phi}, \quad f = v|_{\Sigma},
\end{align}
where $\phi = \phi^{(N_0)}$, $a_{\pp} = a^{(N_1,N_2)}$, where we set $N \geq (2,0,0)$ to guarantee that $P v = 0$ in the limit $\pp \to \infty, \, \qq \to 0$, and where we drop the dependence of $v$ on $\pp,\qq,N$ for ease of notation. Regarding $\v$, we take the complex conjugate of this Gaussian beam solution (which is also a Gaussian beam solution as $P v = 0$ implies that $P \bar{v} = 0$) with $\pp$ doubled to establish a phase cancellation later, i.e.,
\begin{align} \label{eq:GBv}
    \v = \x_{\qq} \bar{a}_{2 \pp} e^{-2 \i \pp \bar{\phi}}, \quad \ff = \v|_{\Sigma}.
\end{align}
We also note that $\d t \d x = c^3(1 + \Oo(\qq^2)) \, \d \zz \wedge \d z'$ is the Euclidean volume form under Fermi coordinates, since $|\det \bar{g}(t,x) |^{1/2} = c^3$ and $|\det \bar{g}(s,z')|^{1/2} = 1 + \Oo(\qq^2)$.
As a consequence,
\begin{align} \label{eq:sp1}
    \int_{\Mext} \b \p_t v^2 \p_t \v \, \d x \d t = \int_{\zz_- - \qq}^{\zz_+ + \qq} \int_{\norm{z'} < \qq} \b e^{- 4 \pp \Im(\phi)} \left[4 \pp^2 |\p_t \phi|^2 \amp_0^2 \bar{\amp}_0 + \Oo(\pp) \right] c^3 ( 1+ \Oo(\qq^2)) \, \d \zz \wedge \d z',
\end{align}
where we extended $\b$ smoothly to $\Omegaext$, in light of the main results of this work. From the constructed phase and amplitude functions, and from the fact that $\p_t z^1 = -\frac{1}{\sqrt{2}}$, we observe that
\begin{align*}
    |\p_t \phi|^2 \amp_0^2 \bar{a}_0
    = \frac{1}{2} c(p)^{\frac 3 2} c(\thetaext(s))^{-\frac{3}{2}} (\det Y(\zz))^{-\frac 3 2} + \Oo(\qq)
\end{align*}
in $V_{\qq}$. This combined with Laplace's method to approximate the inner integral in equation \eqref{eq:sp1} (see, e.g., \cite{miller2006applied}[Section 3.7]), equation \eqref{eq:identity_HY}, the fact that $\det Y(\zz) \neq 0$ along $\thetaext$, and conditions (ii) and (iii) gives that
\begin{align} \label{eq:sp2}
\begin{split}
    \int_{\norm{z'} < \qq} \b e^{- 4 \pp \Im(\phi)} [4 \pp^2 |\p_t \phi|^2 \amp_0^2 \bar{\amp}_0 + \Oo(\pp) ]& c^3 ( 1+ \Oo(\qq^2)) \, \d z' \\
    &= C_{c,\thetaext,p} \b(\thetaext(s)) c^{\frac 3 2}(\thetaext(s)) (\det Y(\zz))^{-\frac 1 2} \pp^{\frac 1 2} + \Oo(\pp^{-\frac 1 2}),
\end{split}
\end{align}
for any $\pp \geq 1, \, 0 < \qq \leq 1$, where $C_{c,\thetaext,p} = 2 (2\pi)^{3/2}C_{\thetaext}^{-1/2} c(p)^{\frac 3 2}$.
Combining equations \eqref{eq:sp1}, \eqref{eq:sp2} gives
\begin{align} \label{eq:wv_integral_identity_0}
    \pp^{-1/2} \int_{M} \b  \p_t v^2 \p_t \v \, \d x \d t = C_{c,\thetaext} \int_{\zz_- - \qq}^{\zz_+ + \qq} \b(\thetaext(s)) c^{\frac 3 2}(\thetaext(s)) (\det Y(\zz))^{-\frac 1 2} \, \d \zz + \Oo(\pp^{-1}),
\end{align}
for any $\pp \geq 1, \, 0 < \qq \leq 1$, where we used that $c,\b$ are bounded on $\Omegaext$ and that $\det Y$ does not vanish along $\tilde{\t}$. 
Let $\g, \t$ be the restrictions of $\gammaext, \thetaext$ to $\Omega,M$, respectively, and define
\begin{align*}
    \Jj_{c,\theta} \b := C_{c,\theta} \int_{\zz_-}^{\zz_+} \b(\t(s)) c(\t(s))^{\frac{3}{2}} (\det Y(\zz))^{-\frac 1 2} \, \d \zz
\end{align*}
which is a weighted geodesic ray transform of $\b$, in particular, the Jacobi weighted ray transform of the first kind (see, e.g., \cite{feizmohammadi2020inverse}). We then get from equation \eqref{eq:wv_integral_identity_0} that
\begin{align} \label{eq:wv_integral_identity}
    \pp^{-1/2} \int_{\M} \b \p_t v^2 \p_t \v \, \d x \d t
    = \Jj_{c,\theta} \b + \Oo(\pp^{-1} + \qq)
\end{align}
for any $\pp \geq 1, \, 0 < \qq \leq 1, \, N \geq (2,0,0)$, where we again used that $c,\b$ are bounded on $\Omegaext$ and that $\det Y$ does not vanish along $\tilde{\t}$. From this integral identity, Proposition \ref{thm:stabb1} readily follows.

\begin{proof}[Proof Proposition \ref{thm:stabb1}] 
Recall that $\g,\t$ are uniquely parameterized by $(p = \gamma(t_-), \xi' = \dot{\gamma}(t_-)) \in S_-^* \p \Omega$, giving that $\Jj_{c,\t} = \Jj_{c,p,\xi'}$. We set $v, \v, f, h$ as in equations \eqref{eq:GBu12}, \eqref{eq:GBv} with $N \geq (2,0,0)$. From equation \eqref{eq:wv_integral_identity}, we then have that
\begin{align*}
    |\Jj_{c,p,\xi'}(\b_1 - \b_2)| \leq \pp^{-1/2} \left| \int_M (\b_1 - \b_2) \p_t v^2 \p_t \v \, \d x \d t \right| + \Oo(\pp^{-1} + \qq)
\end{align*}
for any $\pp \geq 1, \, 0 < \qq \leq 1$. Moreover, from equation \eqref{eq:wv_alessandrini} and Lemma \ref{lem:DN12}
\begin{align*}
    \Bigg| \int_\M (\b_1 - \b_2) \p_t v^2 \p_t \v \, \d x \d t \Bigg| &\leq |\ip{(\p_{\eps}^2 \Lambda_1 - \p_{\eps}^2 \Lambda_2)(\eps f)}{\ff}_{L^2(\Sigma)}| \\
    &\lesssim \norm{\ff}_{H^{-(s-2)}(\Sigma)} \norm{(\p_{\eps}^2 \Lambda_1 - \p_{\eps}^2 \Lambda_2)(\eps f)}_{H^{s-2}(M)} \\
    &\lesssim \norm{\ff}_{\Cc(\Sigma)} (\eps^{-2} \delta + \eps \norm{f}_{H^{s+3}(\Sigma)}^3).
\end{align*}
where $\Lambda_{\ind}$ is the DN map corresponding to $\b=\b_\ind$, and we have from Lemma \ref{lem:gb_est} that $\norm{\ff}_{\Cc(\Sigma)} \lesssim 1, \, \norm{f}_{H^{s+3}(\Sigma)} \lesssim \pp^{\frac{4s + 11}{4}}$. Combining the above estimates and additionally setting $\qq = \pp^{-1}$ gives that
\begin{align*}
    |\Jj_{c,p,\xi'}(\b_1 - \b_2)| \lesssim \pp^{-\frac 1 2} \eps^{-2} \delta + \eps \pp^{\frac{12s+31}{4}} + \pp^{-1}
\end{align*}
for any $\pp \geq 1$. We will bound the right-hand side in terms of $\delta$ by finding the minimizers $\pp,\eps$ in terms of $\delta$ that do so. Setting $\pp,\eps$ to be these minimizers gives that
\begin{align*}
    \pp = C_s \delta^{-\frac{1}{6s+18}}, \quad \eps = \tilde{C}_s \delta^{\frac{12s+35}{24s+72}},
\end{align*}
with $C_s,\tilde{C}_s > 0$ some constants dependent on $s$ only. With these choices for $\pp,\eps$, we get from Lemma \ref{lem:gb_est} that $\eps \norm{f_{\pp}}_{H^{s+3}(\Sigma)} \lesssim \delta^{\frac{1}{12s+36}}$, guaranteeing well-posedness of equation \eqref{eq:wv} for $\delta$ small enough, and that
\begin{align*}
    |\Jj_{c,p,\xi'}(\b_1 - \b_2)| \lesssim \delta^{\frac{1}{6s+18}}.
\end{align*}
As the geodesic was chosen arbitrarily, we thus have that
\begin{align*}
    \norm{\Jj_{c,p,\xi'}(\b_1 - \b_2)}_{L^\infty(S_-^*\p\Omega)} \lesssim \delta^{\frac{1}{6s+18}}.
\end{align*}
Since $(\Omega,g)$ satisfies the foliation condition, we can invert $\Jj_{c,p,\xi'}(\b_1 - \b_2)$ through the procedure described in Section \ref{sec:preliminaries}, and therefore we get that
\begin{align*}
    \norm{\b_1 - \b_2}_{H_\mathsf{F}^{-1}(\Omega)} \lesssim \delta^{\frac{1}{6s+18}}
\end{align*}
from estimate \eqref{eq:stab_wgrt}. Since we have $\norm{\b}_{H^{l}(\Omega)} \leq C_\b$, we can improve the stability estimate by interpolating Sobolev spaces. Choosing $\kappa$ such that $0 < \kappa < \frac{l-q}{l+1}$, we get
\begin{align*}
    \norm{\b_1 - \b_2}_{H^q(\Omega)} &\leq \norm{\b_1 - \b_2}_{H^{l - (l+1) \kappa}(\Omega)} \\
    &\lesssim \norm{\b_1 - \b_2}_{H^{-1}(\Omega)}^{\kappa} \norm{\b_1 - \b_2}_{H^{l}(\Omega)}^{1-\kappa} \\
    &\lesssim \norm{\b_1 - \b_2}_{H_\mathsf{F}^{-1}(\Omega)}^{\kappa} \\
    &\lesssim \delta^{\frac{\kappa}{6s+18}},
\end{align*}
for any $q < l$.
\end{proof}

\section{Proof of Theorem \ref{thm:stabb2}} \label{sec:thm2}

\subsection{Stability of geodesic flow, Jacobi fields, and Gaussian beams}
To prove Theorem \ref{thm:stabb2}, we need to establish stability of geodesic flow, of the Jacobi field $Y$, and of the constructed Gaussian beams with respect to the sound speed. The following stability result for variable-coefficient ordinary differential equations will be a key ingredient in proving these results. The proof is a slight extension to Picard-Lindel\"of (see, e.g., \cite{coddington1955theory}[Theorem 7.4]), and is included in Appendix \ref{ap:proof_ODE} for completeness.

\begin{lemma} \label{lem:ODE_est}
Consider the equation
\begin{align*}
    \begin{dcases}
    \dot{x}(t) = f(t,x(t),\a(x(t))), \\ 
    x(t_-) = \xi,
    \end{dcases}
\end{align*}
on a bounded interval $[t_-,t_+]$, and suppose that $x(t) \in \Omega \subset \RR^n$ for all $t \in [t_-,t_+]$, and that $\a:\Omega \rightarrow \RR^m, \, \a \in L^{\infty}(\Omega)$. If $f$ is globally Lipschitz in $x$ and $\a$, then there exists a unique solution $x \in \Cc^1([t_-,t_+])$ with
\begin{align} \label{eq:est_ODE_lipschitz}
    \norm{x_1 - x_2}_{L^{\infty}([t_-,t_+])} \lesssim \norm{\xi_1 - \xi_2}_{} + \norm{ \a_1 - \a_2 }_{L^{\infty}(\Omega)},
\end{align}
where $x_\ind, \, \ind=1,2$ solves the equation with $\xi=\xi_\ind, \, \a=\a_\ind$.
Moreover, if additionally $f$ is $k-1$ times differentiable in $t$, $k$ times differentiable in $x$ and $\a$, and $\a \in \Cc^{k-1}(\Omega)$ with $\norm{\a}_{\Cc^{k-1}(\Omega)} \leq C_\a$ for some $C_\a > 0$,
then $x \in \Cc^{k}([t_-,t_+])$ with
\begin{align} \label{eq:est_ODE_k}
    \norm{x_1 - x_2}_{\Cc^k([t_-,t_+])} \lesssim \norm{\xi_1 - \xi_2}_{} + \norm{ \a_1 - \a_2 }_{\Cc^{k-1}(\Omega)},
\end{align}
for any $k \geq 1$.
\end{lemma}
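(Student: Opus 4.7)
The strategy is to combine standard Picard--Lindel\"of iteration with Gronwall's inequality for the $L^\infty$ bound, and then argue by induction on the order of differentiation for the $\Cc^k$ bound. First, I would invoke Picard iteration on the bounded interval $[t_-,t_+]$: the global Lipschitz hypothesis on $f$ together with the uniform boundedness of $\alpha_\ell$ and of the trajectory $x(t)\in\Omega$ yields a unique $\Cc^1$-solution for each pair $(\xi_\ell,\alpha_\ell)$, admitting the integral representation
\begin{align*}
x_\ell(t) = \xi_\ell + \int_{t_-}^t f\bigl(s, x_\ell(s), \alpha_\ell(x_\ell(s))\bigr) \, \d s, \quad \ell=1,2.
\end{align*}
Subtracting and applying the global Lipschitz property of $f$ in its $(x,\alpha)$-arguments produces
\begin{align*}
|x_1(t) - x_2(t)| \lesssim |\xi_1 - \xi_2| + \int_{t_-}^t \bigl(|x_1 - x_2| + |\alpha_1(x_1) - \alpha_2(x_2)|\bigr)\, \d s.
\end{align*}
The key manipulation is the splitting
\begin{align*}
|\alpha_1(x_1(s)) - \alpha_2(x_2(s))| \leq |\alpha_1(x_1(s)) - \alpha_1(x_2(s))| + |\alpha_1(x_2(s)) - \alpha_2(x_2(s))|,
\end{align*}
whose first piece is controlled by a constant multiple of $|x_1(s)-x_2(s)|$ via Lipschitz regularity of $\alpha_1$ (at least one of the two coefficients must carry this regularity in order that the right-hand side of the ODE be continuous in the classical sense), and whose second piece is bounded by $\norm{\alpha_1 - \alpha_2}_{L^\infty(\Omega)}$. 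Inserting this back and applying Gronwall's inequality to $|x_1(t)-x_2(t)|$ on $[t_-,t_+]$ produces \eqref{eq:est_ODE_lipschitz}.

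For the $\Cc^k$ bound \eqref{eq:est_ODE_k}, I would induct on $k$. The case $k=1$ is immediate: substituting \eqref{eq:est_ODE_lipschitz} back into the ODE and invoking Lipschitz regularity of $f$ together with $\alpha_\ell\in\Cc^0$ controls $|\dot{x}_1(t) - \dot{x}_2(t)|$ pointwise by the right-hand side of \eqref{eq:est_ODE_k}. For the inductive step, differentiating $\dot{x}_\ell = f(t,x_\ell,\alpha_\ell(x_\ell))$ $j-1$ times and iterating the chain rule expresses $\frac{d^j x_\ell}{dt^j}$ as a polynomial in partial derivatives $\p^\beta f$ and $\p^\gamma \alpha_\ell$ with $|\gamma| \leq j-1$, evaluated along the trajectory $x_\ell(s)$, multiplied by lower-order time derivatives of $x_\ell$. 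Each term in the resulting difference decomposes via the same splitting trick into a piece producing factors of the form $|\p^\gamma \alpha_1(x_2) - \p^\gamma \alpha_2(x_2)|\leq \norm{\alpha_1-\alpha_2}_{\Cc^{k-1}(\Omega)}$ and a piece with factors of the form $|\p^\gamma \alpha_1(x_1) - \p^\gamma \alpha_1(x_2)|\lesssim C_\alpha |x_1-x_2|$; the remaining lower-order-derivative differences of $x_\ell$ are controlled by the inductive hypothesis.

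The main obstacle is the combinatorial bookkeeping at high order: each successive differentiation generates many chain-rule terms, and each must be split to separate the contribution controlled by $\norm{\alpha_1 - \alpha_2}_{\Cc^{k-1}(\Omega)}$ from the contribution absorbed either by the uniform bound $\norm{\alpha_\ell}_{\Cc^{k-1}(\Omega)}\leq C_\alpha$ or by the inductive hypothesis on lower-order time derivatives. Provided $f$ is $k-1$ times differentiable in $t$ and $k$ times in $(x,\alpha)$ as assumed, all resulting constants stay uniformly bounded on $[t_-,t_+]$, and a final Gronwall-type argument applied to the top-order derivative propagates the estimate to give \eqref{eq:est_ODE_k}.
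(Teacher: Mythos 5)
Your proposal is correct and follows essentially the same route as the paper's own proof: Picard--Lindel\"of for existence, the integral representation plus the Lipschitz splitting of $|\a_1(x_1)-\a_2(x_2)|$ followed by Gr\"onwall for estimate \eqref{eq:est_ODE_lipschitz}, and induction with the chain rule for \eqref{eq:est_ODE_k}. Your parenthetical observation that one of the coefficients $\a_\ell$ must carry Lipschitz regularity for the splitting to close is a point the paper's hypothesis $\a \in L^{\infty}(\Omega)$ leaves implicit (its condition \eqref{eq:cond_lemma1} quietly absorbs it), so your version is, if anything, slightly more careful.
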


\begin{remark}
The result of Lemma \ref{lem:ODE_est} can naturally be extended to multiple parameters $\a^j, \, j=1,2,\dots,l$, and the corresponding estimates are given by
\begin{align*}
    \norm{x_1(t) - x_2(t)}_{L^{\infty}([t_-,t_+])} &\lesssim \norm{\xi_1 - \xi_2}_{} + \sum_{j=1}^l \norm{ \a_1^{j} - \a_2^{j} }_{L^\infty(\Omega)}, \\
    \norm{x_1(t) - x_2(t)}_{\Cc^{k}([t_-,t_+])} &\lesssim \norm{\xi_1 - \xi_2}_{} + \sum_{j=1}^l \norm{ \a_1^{j} - \a_2^{j} }_{\Cc^{k-1}(\Omega)}.
\end{align*}
\end{remark}

Lemma \ref{lem:ODE_est} readily proves continuity of the geodesic flow, the Jacobi field $Y$, and the constructed Gaussian beams with respect to the metric, and thus with respect to the sound speed provided it is a priori bounded as in that case
\begin{align} \label{eq:stab_g-c}
    \norm{g_1 - g_2}_{\Cc^k(\Omega)} \lesssim \norm{c_1 - c_2}_{\Cc^k(\Omega)},
\end{align}
for any $k \geq 0$. We will now make this precise.

\begin{lemma} \label{prop:stab_geod_flow_0}
Consider the Riemannian manifolds $(\Omega,g_\ell), \, (g_\ind)_{ij} = c_\ind^{-2} \delta_{ij}, \, \ind = 1,2$, and suppose $c_\ind \in \Cc^{k}(\Omega), \, c_\ind>0, \, k \geq 2$ with $\norm{c_\ind}_{\Cc^k(\Omega)} \leq C_c$ for some $C_c > 0$. Let $\g_\ind(t), \, t \in [t_-,t_+]$ be geodesics in respectively $(\Omega,g_\ind)$ with $\g_1(t_-) = \g_2(t_-)$, $\dot{\g}_1(t_-) = \dot{\g}_2(t_-)$ without reflections with the boundary. Then
\begin{align*}
    \norm{\g_1 - \g_2}_{\Cc^{k+1}([t_-,t_+])} \lesssim \norm{c_1 - c_2}_{\Cc^k(\Omega)}.
\end{align*}
\end{lemma}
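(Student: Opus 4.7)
The plan is to recast the geodesic equation as a first-order ODE system and invoke Lemma \ref{lem:ODE_est}. For the conformally flat metric $g_{ij} = c^{-2}\delta_{ij}$ the Christoffel symbols admit the explicit expression
\begin{align*}
    \Gamma^i_{jk}(y) = -c^{-1}(y)\bigl(\delta^i_j \p_k c(y) + \delta^i_k \p_j c(y) - \delta_{jk} \p_i c(y)\bigr),
\end{align*}
so they are rational in $c$ (with $c$ in the denominator) and polynomial in the first derivatives $\nabla c$. Setting $x := (\g, \dot\g) \in \RR^6$ and $\alpha(y) := (c(y), \nabla c(y))$, the geodesic equation $\ddot\g^i + \Gamma^i_{jk}(\g)\dot\g^j \dot\g^k = 0$ becomes an autonomous first-order system of the form $\dot x = f(x, \alpha(\pi_\Omega x))$, where $\pi_\Omega$ projects onto the position coordinates and $f$ is smooth in $(x, \alpha)$ on the regime $c > 0$.

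Next I would check the hypotheses of Lemma \ref{lem:ODE_est}. From $\norm{c_\ind}_{\Cc^k(\Omega)} \leq C_c$ we get $\alpha_\ind \in \Cc^{k-1}(\Omega)$ with $\norm{\alpha_\ind}_{\Cc^{k-1}(\Omega)} \lesssim C_c$, while compactness of $\Omega$ together with the continuity and positivity of $c_\ind$ gives a positive lower bound on $c_\ind$, ensuring that $f$ is smooth in $(x, \alpha)$ uniformly along the trajectory. The assumption that $\g_\ind$ involves no boundary reflections guarantees that both trajectories stay inside $\Omega$, where $\alpha_\ind$ is defined, for all $t \in [t_-,t_+]$.

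Since the initial conditions $\xi_\ind := (\g_\ind(t_-),\dot\g_\ind(t_-))$ coincide, estimate \eqref{eq:est_ODE_k} of Lemma \ref{lem:ODE_est} yields
\begin{align*}
    \norm{(\g_1,\dot\g_1) - (\g_2,\dot\g_2)}_{\Cc^k([t_-,t_+])} \lesssim \norm{\alpha_1 - \alpha_2}_{\Cc^{k-1}(\Omega)} \lesssim \norm{c_1 - c_2}_{\Cc^k(\Omega)},
\end{align*}
where the last step uses that $\nabla$ gains one derivative. Reading off the second component of the state then gives $\norm{\dot\g_1 - \dot\g_2}_{\Cc^k([t_-,t_+])} \lesssim \norm{c_1 - c_2}_{\Cc^k(\Omega)}$, which combined with the estimate on $\g$ itself is precisely the claim $\norm{\g_1 - \g_2}_{\Cc^{k+1}([t_-,t_+])} \lesssim \norm{c_1 - c_2}_{\Cc^k(\Omega)}$.

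The only mildly delicate point is the bookkeeping of the derivative gain: the coefficient $\alpha$ sits in $\Cc^{k-1}$ since it already records $\nabla c$, the first-order ODE supplies one derivative via Lemma \ref{lem:ODE_est}, and a second is free because $\dot\g$ is part of the state vector. I do not expect any serious obstacle beyond verifying the smoothness and positivity hypotheses above; once they are in place the argument is a clean application of Lemma \ref{lem:ODE_est}.
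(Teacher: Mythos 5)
Your proposal is correct and follows essentially the same route as the paper: rewrite the geodesic equation as a first-order system in the state $(\g,\dot\g)$ and apply Lemma \ref{lem:ODE_est}, with the extra derivative on $\g$ coming from $\dot\g$ being part of the state vector. The only cosmetic difference is that you feed $(c,\nabla c)$ into the ODE lemma as the parameter, whereas the paper uses the Christoffel symbols $\Gamma$ directly and bounds $\norm{\Gamma_1-\Gamma_2}_{\Cc^{k-1}}$ by $\norm{c_1-c_2}_{\Cc^{k}}$; both yield the same estimate.
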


\begin{proof}
Consider the geodesic flow equation
\begin{align*}
    \ddot{\g}_\ind^k + \Gamma_{ij}^k(\g) \dot{\g}_\ind^i \dot{\g}_\ind^j = 0,
\end{align*}
on $[t_-,t_+]$. Since the Christoffel symbols $\Gamma_{ij}^l$ are given by $\Gamma_{ij}^l = \frac{1}{2} g^{lm} (\p_i g_{jm} + \p_j g_{im} - \p_m g_{ij})$, we have that $\norm{\Gamma_\ind}_{\Cc^{k-1}(\Omega)} \leq C_c$ and
\begin{align*}
    \norm{ \Gamma_1 - \Gamma_2 }_{\Cc^{k-1}(\Omega)} \lesssim \norm{g_1 - g_2}_{\Cc^{k}(\Omega)} \lesssim \norm{c_1 - c_2}_{\Cc^{k}(\Omega)}
\end{align*}
from estimate \eqref{eq:stab_g-c}. In order to apply Lemma \ref{lem:ODE_est}, we rewrite the geodesic flow equation to a first-order equation $\dot{\z}_\ind = f(\z_\ind;\Gamma_\ind)$ by setting $\z_\ind := (\g_\ind^1,\g_\ind^2,\g_\ind^3,\dot{\g}_\ind^1,\dot{\g}_\ind^2,\dot{\g}_\ind^3)$. Since $f$ is smooth, $\norm{\Gamma_\ind}_{\Cc^{k-1}(\Omega)} \leq C_c$, and $\z_1(t_-) = \z_2(t_-)$, we get from Lemma \ref{lem:ODE_est} that
\begin{align*}
    \norm{\gamma_1 - \gamma_2}_{\Cc^{k+1}([t_-,t_+])} \lesssim \norm{ \Gamma_1 - \Gamma_2 }_{\Cc^{k-1}(\Omega)}.
\end{align*}
Combining the above estimates concludes the proof. 
\end{proof}

\begin{corollary} \label{prop:stab_geod_flow}
    Consider the Lorentzian manifolds $(\M, \gL_\ell), \, (\gL_\ind)_{ij} = -\d t^2 + c_\ind^{-2} \delta_{ij}, \, \ind = 1,2$, and suppose $c_\ind \in \Cc^{k}(\Omega), \, c_\ind>0, \, k \geq 2$ with $\norm{c_\ind}_{\Cc^k(\Omega)} \leq C_c$ for some $C_c > 0$. Let $\t_\ind(s), \, s \in [s_-,s_+]$ be null geodesics in respectively $(\M,\gL_\ind)$ with $\t_1(s_-) = \t_2(s_-)$, $\dot{\t}_1(s_-) = \dot{\t}_2(s_-)$ without reflections with the boundary. Then
    \begin{align*}
        \norm{\t_1 - \t_2}_{\Cc^{k+1}([s_-,s_+])} \lesssim \norm{c_1 - c_2}_{\Cc^k(\Omega)}.
    \end{align*}
\end{corollary}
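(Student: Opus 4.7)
The plan is to exploit the product structure of the Lorentzian metric $\bar g_\ind = -\d t^2 + g_\ind$ to reduce the statement to Lemma \ref{prop:stab_geod_flow_0}. Writing the null geodesic in coordinates as $\theta_\ind(s) = (t_\ind(s), x_\ind(s))$, I would first compute the Christoffel symbols of $\bar g_\ind$. Since $\bar g_\ind$ is independent of $t$ and $\bar g_{00} \equiv -1$, all symbols with an upper $0$-index vanish, and among the spatial symbols $\bar\Gamma^k_{\mu\nu}$, the components with any temporal lower index vanish as well, while $\bar\Gamma^k_{ij}$ coincides with the Riemannian Christoffel symbol $\Gamma^k_{ij}$ of $g_\ind$.

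As a result, the geodesic equation decouples into
\begin{align*}
    \ddot t_\ind = 0, \qquad \ddot x_\ind^k + \Gamma^k_{ij}(x_\ind)\, \dot x_\ind^i \dot x_\ind^j = 0.
\end{align*}
The first equation shows that $t_\ind$ is affine in $s$, so the matching initial data $t_1(s_-) = t_2(s_-)$ and $\dot t_1(s_-) = \dot t_2(s_-)$ force $t_1 \equiv t_2$ on $[s_-,s_+]$, contributing zero to the difference in any $\Cc^{k+1}$ norm. The second equation is precisely the Riemannian geodesic equation in $(\Omega, g_\ind)$, and the matching initial data $x_1(s_-) = x_2(s_-)$, $\dot x_1(s_-) = \dot x_2(s_-)$ put us exactly in the hypotheses of Lemma \ref{prop:stab_geod_flow_0}, which yields
\begin{align*}
    \norm{x_1 - x_2}_{\Cc^{k+1}([s_-,s_+])} \lesssim \norm{c_1 - c_2}_{\Cc^k(\Omega)}.
\end{align*}

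Combining the two components gives $\norm{\theta_1 - \theta_2}_{\Cc^{k+1}([s_-,s_+])} = \norm{x_1 - x_2}_{\Cc^{k+1}([s_-,s_+])}$ and hence the claim. Note that the null condition $\bar g_\ind(\dot\theta_\ind, \dot\theta_\ind) = 0$ plays no active role in the estimate; it is preserved automatically along the flow because $\partial_t$ is Killing, so it simply fixes $|\dot x_\ind|_{g_\ind} = |\dot t_\ind|$ as a conserved quantity and is consistent with the matching initial conditions. I do not anticipate any real obstacle: the product structure trivializes the time direction, and all the analytic work has been done in Lemma \ref{prop:stab_geod_flow_0}. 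The only item requiring care is the routine Christoffel symbol computation for the static product metric.
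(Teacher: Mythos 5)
Your proof is correct and follows exactly the route the paper intends: the paper states this as an immediate corollary of Lemma \ref{prop:stab_geod_flow_0}, relying on the fact that for the static product metric $\gL = -\d t^2 + g$ the geodesic equation decouples into an affine time component and the Riemannian geodesic equation for the spatial component, which is precisely the reduction you carry out. Your explicit Christoffel-symbol verification and the observation that the null condition is passive in the estimate are the (omitted) routine details, and they check out.
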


\begin{lemma} \label{lem:Y}
Consider the Lorentzian manifolds $(\M, \gL_\ell), \, (\gL_\ind)_{ij} = -\d t^2 + c_\ind^{-2} \delta_{ij}, \, \ind = 1,2$, and suppose $c_\ind \in \Cc^{k}(\Omega), \, c_\ind>0, \, k \geq 2$ with $\norm{c_\ind}_{\Cc^k(\Omega)} \leq C_c$ for some $C_c > 0$. Let $Y_{\ind}$ be the Jacobi fields obeying equation \eqref{eq:Y} along null geodesics $\t_\ind(s), \, s \in [s_-,s_+]$ with $\t_1(s_-) = \t_2(s_-)$, $\dot{\t}_1(s_-) = \dot{\t}_2(s_-)$, where $\t_\ind$ do not reflect with the boundary in $[s_-,s_+]$. Then
\begin{align*}
    \norm{Y_1 - Y_2}_{\Cc^k([s_-,s_+])} \lesssim 
    \norm{c_1 - c_2}_{\Cc^{k}(\Omega)}.
\end{align*}
In particular
\begin{align*}
    \norm{\det(Y_1) - \det(Y_2)}_{\Cc^k([s_-,s_+])} \lesssim \norm{c_1 - c_2}_{\Cc^{k}(\Omega)}.
\end{align*}
\end{lemma}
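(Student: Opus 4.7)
The plan is to rewrite the Jacobi equation \eqref{eq:Y} as a first-order linear system in the state $w_\ind := (Y_\ind, \dot Y_\ind)$,
\begin{align*}
    \dot w_\ind = \Mm_\ind(\zz) w_\ind, \quad \Mm_\ind(\zz) := \begin{pmatrix} 0 & I \\ -\C \D_\ind(\zz) & 0 \end{pmatrix},
\end{align*}
and then to apply Lemma \ref{lem:ODE_est} after establishing stability of the coefficient $\D_\ind(\zz) = G_{11,\ind}^2(\zz)$ with respect to $c_\ind$. Since $\D_\ind$ depends on $\zz$ only, I would invoke the remark following Lemma \ref{lem:ODE_est} to fold $\zz$ into the state so that the coefficient $\D_\ind$ may be treated as a state-dependent parameter.

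I would next unwind the Fermi coordinate construction to track how $\D_\ind$ depends on $c_\ind$: the coefficient $G_{11,\ind}^2(\zz)$ is determined by (a) the null geodesic $\thetaext_\ind(\zz)$, for which Corollary \ref{prop:stab_geod_flow} gives $\Cc^{k+1}$-stability in $c_\ind$; (b) the parallel pseudo-orthonormal frame $E_{\mu,\ind}(\zz)$ along $\thetaext_\ind$, which solves the first-order linear ODE $\dot E_\mu^k + \Gamma^k_{ij}(\thetaext_\ind) \dot{\thetaext}_\ind^i E_\mu^j = 0$ and is $\Cc^k$-stable by Lemma \ref{lem:ODE_est} since $\Gamma \in \Cc^{k-1}$ with the bound \eqref{eq:stab_g-c}; and (c) up to two spatial derivatives of $\gLext_\ind$ evaluated at $\thetaext_\ind(\zz)$, which lie in $\Cc^{k-2}$. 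Composing these ingredients via the chain rule gives
\begin{align*}
    \|\D_1 - \D_2\|_{\Cc^{k-2}([\zz_-, \zz_+])} \lesssim \|c_1 - c_2\|_{\Cc^k(\Omega)},
\end{align*}
where the loss of two derivatives reflects the curvature-like nature of $G^2_{11}$ in \eqref{eq:gbarVrho}.

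With the coefficient estimate in hand, Lemma \ref{lem:ODE_est} applied to the augmented first-order system with parameter $\D_\ind \in \Cc^{k-2}$ yields $w_\ind \in \Cc^{k-1}([\zz_-, \zz_+])$ together with $\|w_1 - w_2\|_{\Cc^{k-1}} \lesssim \|\D_1 - \D_2\|_{\Cc^{k-2}} \lesssim \|c_1 - c_2\|_{\Cc^k(\Omega)}$. Since $w_\ind = (Y_\ind, \dot Y_\ind)$, the $\Cc^{k-1}$-bound on the component $\dot Y_1 - \dot Y_2$ controls the top derivative of $Y_1 - Y_2$, so that $\|Y_1 - Y_2\|_{\Cc^k([\zz_-, \zz_+])} \lesssim \|c_1 - c_2\|_{\Cc^k(\Omega)}$, as desired. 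The determinant bound then follows because $\det Y_\ind$ is a polynomial in the entries of $Y_\ind$, and Picard-Lindel\"of on the compact interval gives a uniform $\Cc^k$-bound on $Y_\ind$, so the multilinear telescoping estimate delivers $\|\det Y_1 - \det Y_2\|_{\Cc^k} \lesssim \|Y_1 - Y_2\|_{\Cc^k}$.

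The main technical hurdle is the coefficient estimate for $\D_\ind$: one must verify that the functional dependence of $G^2_{11,\ind}$ on $c_\ind$ through the exponential map, the parallel frame, and the Fermi expansion loses exactly two derivatives and no more. Everything else reduces to routine applications of results from earlier in this section combined with chain-rule accounting.
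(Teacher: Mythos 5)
Your proposal follows essentially the same route as the paper: reduce the Jacobi equation \eqref{eq:Y} to a first-order system, establish $\Cc^{k-2}$-stability of the coefficient $\D = G_{11}^2$ via estimate \eqref{eq:stab_g-c}, apply Lemma \ref{lem:ODE_est}, and treat the determinant as a polynomial in the entries of $Y$. Your explicit bookkeeping of where the extra derivative is gained (the state $(Y,\dot Y)$ inherits $\Cc^{k-1}$-stability from the $\Cc^{k-2}$ parameter, so $Y$ itself is $\Cc^k$-stable) and your unwinding of how $G_{11}^2$ depends on $c$ through the geodesic and Fermi frame are details the paper leaves implicit, but the argument is the same.
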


\begin{proof}
Let $\D_\ind$ be the matrix $\D = G_{11}^2$ appearing in equation \eqref{eq:Y} in the metric $g_\ind$, then $\norm{\D_\ind}_{\Cc^{k-2}(\Omega)} \leq C_c$ and
\begin{align} \label{eq:est_Bgc}
    \norm{\D_1 - \D_2}_{\Cc^{k-2}([s_-,s_+])} \lesssim \norm{g_1 - g_2}_{\Cc^{k}(\Omega)} \lesssim \norm{c_1 - c_2}_{\Cc^{k}(\Omega)}
\end{align}
from estimate \eqref{eq:stab_g-c}. Following the same strategy as in the proof of Lemma \ref{prop:stab_geod_flow_0}, i.e., rewriting $\ddot{Y} = f(Y,\D)$ as a first-order ODE, applying Lemma \ref{lem:ODE_est}, and using $Y_{1}(s_-) = Y_{2}(s_-) = Y_0, \, \dot{Y}_{1}(s_-) = \dot{Y}_{2}(s_-) = AZ_0$, we get that
\begin{align*}
    \norm{Y_1 - Y_2}_{\Cc^{k}([s_-,s_+])} 
    \lesssim \norm{\D_1 - \D_2}_{\Cc^{k-2}([s_-,s_+])}.
\end{align*}
Hence, the first estimate follows. Finally, the second estimate readily follows from the definition of the determinant.
\end{proof}

\begin{lemma} \label{lem:stab_gb}
Consider the Lorentzian manifolds $(\M, \gL_\ell), \, (\gL_\ind)_{ij} = -\d t^2 + c_\ind^{-2} \delta_{ij}, \, \ind = 1,2$, and suppose $c_\ind \in \Cc^\infty(\Omega), \, c_\ind> 0$ with $\norm{c_\ind}_{\Cc^{k+1}(\Omega)} \leq C_c$ for some $k \geq 1, \, C_c > 0$.
Let $v_\ind = (v_{\pp,\qq}^{(N)})_{\ind}, \, \pp \geq 1, \, 0 < \qq \leq 1$ be approximate Gaussian beams of order $N = (2,0,0)$ around the null geodesics $\t_\ind(s), \, s \in [s_-,s_+]$ with $\t_1(s_-) = \t_2(s_-)$, $\dot{\t}_1(s_-) = \dot{\t}_2(s_-)$, where $\t_\ind$ do not reflect with the boundary in $[s_-,s_+]$.
Then
\begin{align*}
\norm{v_1 - v_2}_{\Cc^{k}(M)} \lesssim \pp^{k+1} \norm{c_1 - c_2}_{\Cc^{k+1}(\Omega)}.
\end{align*}
\end{lemma}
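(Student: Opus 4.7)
The plan is to view each Gaussian beam $v_\ind = \x_{\qq}\, a_{\pp,\ind}\, e^{\i \pp \phi_\ind}$ (in Fermi coordinates, then pulled back to $M$ via $\Ff_\ind^{-1}$) as a product of factors each of which depends stably on the sound speed, and to combine them via the Leibniz rule, with the $\pp^{k+1}$ factor arising entirely from the oscillatory exponential. Since $N=(2,0,0)$, we have $\phi_\ind = z^1 + H_\ind(\zz)_{ij} z^i z^j$ and $a_{\pp,\ind} = A^{0,0}_\ind(\zz)$, both $\pp$-independent.

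First I would establish stability of the Fermi coordinate maps $\Ff_\ind(\zz,z') = \exp_{\thetaext_\ind(\zz)}(z^i E_{i,\ind}(\zz))$. These are obtained by solving three ODEs in sequence: the null-geodesic equation for $\thetaext_\ind$, the parallel-transport equation for the pseudo-orthonormal frame $E_{\mu,\ind}$ along $\thetaext_\ind$, and the geodesic equation defining $\exp$. Each has coefficients built from Christoffel symbols of $\gLext_\ind$. Successive applications of Lemma \ref{lem:ODE_est} (with Corollary \ref{prop:stab_geod_flow} for the geodesic step), together with $\norm{c_\ind}_{\Cc^{k+1}(\Omega)} \leq C_c$, yield
\begin{align*}
    \norm{\Ff_1 - \Ff_2}_{\Cc^{k+1}(V_{\qq})} + \norm{\Ff_1^{-1} - \Ff_2^{-1}}_{\Cc^{k+1}(M)} \lesssim \norm{c_1 - c_2}_{\Cc^{k+1}(\Omega)},
\end{align*}
the inverse bound coming from the implicit function theorem applied uniformly in $\ind$. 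Next, in Fermi coordinates, the Riccati equation for $H_\ind$ is equivalent via $H_\ind = Z_\ind Y_\ind^{-1}$ to the linear system \eqref{eq:Y}, so Lemma \ref{lem:Y} combined with $\det Y_\ind \neq 0$ along $\thetaext_\ind$ gives $\Cc^k$-stability of $H_\ind$. The leading amplitude $A^{0,0}_\ind$ in \eqref{eq:zeta0} is a smooth function of already-stable quantities, hence satisfies the same bound. Pulling back by $\Ff_\ind^{-1}$ and applying the chain rule then produces
\begin{align*}
    \norm{\phi_1 - \phi_2}_{\Cc^k(M)} + \norm{a_{\pp,1} - a_{\pp,2}}_{\Cc^k(M)} + \norm{\x_{\qq,1} - \x_{\qq,2}}_{\Cc^k(M)} \lesssim \norm{c_1 - c_2}_{\Cc^{k+1}(\Omega)},
\end{align*}
with constants independent of $\pp$, where $\x_{\qq,\ind} := \x_{\qq} \circ \Ff_\ind^{-1}$.

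Finally, I would combine via the decomposition
\begin{align*}
    v_1 - v_2 = (\x_{\qq,1} - \x_{\qq,2})\, a_{\pp,1}\, e^{\i \pp \phi_1} + \x_{\qq,2}\, (a_{\pp,1} - a_{\pp,2})\, e^{\i \pp \phi_1} + \x_{\qq,2}\, a_{\pp,2}\, (e^{\i \pp \phi_1} - e^{\i \pp \phi_2}).
\end{align*}
Each derivative of $e^{\i \pp \phi_\ind}$ of order at most $k$ produces a factor at most $\pp^k$ (via Fa\`a di Bruno combined with $|e^{\i \pp \phi_\ind}| \leq 1$ on the support of $\x_{\qq}$ by condition (iii) and $\norm{\phi_\ind}_{\Cc^k(M)} \lesssim 1$), so the first two terms contribute $\lesssim \pp^k \norm{c_1 - c_2}_{\Cc^{k+1}(\Omega)}$ in the $\Cc^k(M)$-norm. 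For the third, the identity
\begin{align*}
    e^{\i \pp \phi_1} - e^{\i \pp \phi_2} = \i \pp (\phi_1 - \phi_2) \int_0^1 e^{\i \pp (\s \phi_1 + (1-\s) \phi_2)} \d \s
\end{align*}
combined with the Leibniz rule yields $\norm{e^{\i \pp \phi_1} - e^{\i \pp \phi_2}}_{\Cc^k(M)} \lesssim \pp^{k+1} \norm{\phi_1 - \phi_2}_{\Cc^k(M)}$, which multiplied by $\norm{\x_{\qq,2}\, a_{\pp,2}}_{\Cc^k(M)} \lesssim 1$ gives the claim. The main obstacle is the $\pp$-bookkeeping in the oscillatory term: one must verify that the amplitude and cutoff differences are genuinely $\pp$-independent (true since $a_{\pp,\ind}$ and $\x_{\qq}$ do not depend on $\pp$ for $N=(2,0,0)$), so that the full $\pp^{k+1}$ scaling comes from the exponential factor alone.
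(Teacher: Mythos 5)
Your proposal is correct and follows essentially the same route as the paper: stability of the Fermi coordinates via ODE arguments, stability of $H$ through the Riccati/linear system (Lemma \ref{lem:Y}), stability of the leading amplitude via $\det Y$, and the integral identity for $e^{\i\pp\phi_1}-e^{\i\pp\phi_2}$ supplying the single extra factor of $\pp$ on top of the $\pp^{k}$ from differentiating the exponential. Your bookkeeping is if anything slightly more explicit than the paper's (you track the cutoff difference $\x_{\qq,1}-\x_{\qq,2}$ and the inverse coordinate maps separately), but the decomposition and the source of the $\pp^{k+1}$ scaling are identical.
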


\begin{proof}
Let $(s_\ind,z_\ind')$ be the Fermi coordinates corresponding to $\t_\ind$, and let $\g_{\ind}$ be the unit-speed geodesics in $(\Omega, c_{\ind}^{-2} \delta_{ij})$ corresponding to $\t_{\ind}$, where we set $t=t_{\pm}$ such that $\t_1(s_\pm) = \t_1(t_\pm) = (t_\pm,\g_1(t_\pm))$. From Corollary \ref{prop:stab_geod_flow}, we can set $\delta, \qq > 0$ small enough such that the tubular region $V_{\qq}$ around $\t_1$ includes $\t_2$ for $s \in [s_-,s_+]$. We observe that 
\begin{align*}
    |s_1 - s_2| &\leq \frac{1}{\sqrt{2}} \Bigg| \int_{\g_1(t_-)}^{\g_1(t)} c_1(r) \d r - \int_{\g_2(t_-)}^{\g_2(t)} c_2(r) \d r \Bigg| \\
    &\leq \tfrac{1}{\sqrt{2}} ( \text{diam}_{g_1}(\Omega) \norm{c_1 - c_2}_{L^{\infty}(V_{\qq})} + T C_c \norm{\g_1 - \g_2}_{L^{\infty}([t_-,t_+])}) \\
    &\lesssim \norm{c_1 - c_2}_{L^\infty(\Omega)}
\end{align*}
where we used Lemma \ref{prop:stab_geod_flow_0}, and similarly
\begin{align*}
    |z_1^i - z_2^i| \lesssim \norm{c_1 - c_2}_{L^\infty(\Omega)},
\end{align*}
showing continuity of Fermi coordinates with respect to $c$.

First, we will prove stability of the phase function $\phi^{(2)}$ with respect to $c$. Recall that $\phi_{\ind}^{(2)}(s_\ind,z_\ind') = s_\ind + \frac{1}{2} (H_\ind (s_\ind))_{ij} z_\ind^i z_\ind^j$. Since $\dot{H} = f(H,\D)$ and $\norm{\D}_{\Cc^{k-2}(\M)} \lesssim C_c$, we thus get from Lemma \ref{lem:ODE_est} and estimate \eqref{eq:est_Bgc} that $\norm{H}_{\Cc^{k-1}(\M)} \lesssim C_c$ and
\begin{align*}
    \norm{H_1 - H_2}_{\Cc^{k}([s_-,s_+])} \lesssim \norm{\D_1 - \D_2}_{\Cc^{k-1}([s_-,s_+])} \lesssim \norm{c_1 - c_2}_{\Cc^{k+1}(\Omega)}.
\end{align*}
We have
\begin{align*}
    \norm{\phi_1^{(2)} - \phi_2^{(2)}}_{L^{\infty}(V_{\rho})} &\lesssim |s_1 - s_2| + \qq \norm{z_1' - z_2'} + \qq^2 \norm{H_1 - H_2}_{L^{\infty}([s_-,s_+])},
\end{align*}
and for any 4-dimensional multi-index $\a$ with respectively $|\a| = 1$, $|\a| = l \geq 2$ we have
\begin{align*}
    \norm{\p_z^\a(\phi_1^{(2)} - \phi_2^{(2)})}_{L^{\infty}(V_{\rho})} &\lesssim \norm{z_1 - z_2} + \qq \norm{H_1 - H_2}_{L^\infty([s_-,s_+])} + \qq^2 \norm{H_1 - H_2}_{\Cc^{1}([s_-,s_+])} \\
    \norm{\p_z^\a(\phi_1^{(2)} - \phi_2^{(2)})}_{L^{\infty}(V_{\rho})} &\lesssim \norm{z_1 - z_2} + \norm{H_1 - H_2}_{\Cc^{l-2}([s_-,s_+]} + \qq \norm{H_1 - H_2}_{\Cc^{l-1}([s_-,s_+])} \\
    & \qquad + \qq^2 \norm{H_1 - H_2}_{\Cc^{l}([s_-,s_+])},
\end{align*}
thus giving that
\begin{align*}
    \norm{\phi_1^{(2)} - \phi_2^{(2)}}_{\Cc^k(V_{\qq})} \lesssim \norm{c_1 - c_2}_{\Cc^{k+1}(\Omega)}
\end{align*}
for $k \geq 1$.

Recall that $a_\ind^{(0,0)}(s_\ind,z_\ind) = A^{0,0}(s_\ind) = c(s_-,0)^{\frac{1}{2}} c(s_\ind,0)^{-\frac{1}{2}} (\det Y(s_\ind))^{-\frac{1}{2}}$. Hence, from Lemma \ref{lem:Y} and $\norm{c_\ind}_{\Cc^{k+1}(\Omega)} \leq C_c$, we get that
\begin{align*}
    \norm{a_1^{(0,0)} - a_2^{(0,0)}}_{\Cc^{k}(V_{\qq})} 
    \lesssim \norm{c_1 - c_2}_{\Cc^{k}(\Omega)} + \norm{\det(Y_1) - \det(Y_2)}_{\Cc^k([s_-,s_+])} 
    \lesssim \norm{c_1 - c_2}_{\Cc^{k}(\Omega)}.
\end{align*}

Using 
\begin{align*}
    e^{\i \pp \phi_1^{(2)}} - e^{\i \pp \phi_2^{(2)}} = \i \pp \int_0^1 (\phi_1^{(2)} - \phi_2^{(2)}) e^{\i \pp \left(s \phi_1^{(2)} + (1-s) \phi_2^{(2)}\right)} \d s,  
\end{align*}
we have that
\begin{align*}
    \norm{e^{\i \pp \phi_1^{(2)}} - e^{\i \pp \phi_2^{(2)}}}_{\Cc^k(V_{\qq})} \lesssim \pp^{k+1} \norm{\phi_1^{(2)} - \phi_2^{(2)}}_{\Cc^k(V_{\qq})}.
\end{align*}
Combining this with the earlier derived estimates, we thus get that
\begin{align*}
    \norm{v_1 - v_2}_{\Cc^{k}(\M)} &= \norm{a_1^{(0,0)} e^{\i \pp \phi_1^{(2)}} - a_2^{(0,0)} e^{\i \pp \phi_2^{(2)}}}_{\Cc^{k}(V_{\rho})} \\ 
    &\lesssim \norm{e^{\i \pp \phi_1^{(2)}} - e^{\i \pp \phi_2^{(2)}}}_{\Cc^{k}(V_\rho)} + \norm{a_1^{(0,0)} - a_2^{(0,0)}}_{\Cc^{k}(V_{\rho})} \\
    &\lesssim \pp^{k+1} \norm{\phi_1^{(2)} - \phi_2^{(2)}}_{\Cc^{k}(V_{\rho})} + \pp^{k} \norm{a_1^{(0,0)} - a_2^{(0,0)}}_{\Cc^{k}(V_{\rho})} \\
    &\lesssim \pp^{k+1} \norm{c_1 - c_2}_{\Cc^{k+1}(\Omega)},
\end{align*}
where we used that $\norm{e^{\i \pp \phi_1^{(2)}}}_{\Cc^{k}(V_\rho)} \lesssim \pp^{k}$ by Lemma \ref{lem:gb_est}.
\end{proof}

\subsection{Proof Theorem \ref{thm:stabb2}}

Throughout the proof of Theorem \ref{thm:stabb2}, let $(g_k)_{ij} = c_{k}^{-2} \delta_{ij}, \, k=0,1,2$, and let $\g_0$ be a geodesic in $(\Omega,g_0)$ starting at some point $(p,\xi') \in S_-^*\p \Omega$ at time $t = t_-$. We set $t_- = 0$, and we define $t_+$ be such that $(\g_0(t_+),\dot{\g}_0(t_+)) \in S_+^*\p \Omega$, where $t_+ < \infty$ as $(\Omega,g_0)$ is non-trapping under the foliation condition. Let $\g_\ind, \, \ind=1,2$ be geodesics in the metric $g_\ind$ with $\g_\ind(t_-) = p, \, \dot{\g}_\ind(t_-) = \xi', \, \ind=1,2$. Since we assumed $g_\ind$ to be $\kappa$-close to $g_0$, we know from Lemma \ref{prop:stab_geod_flow_0} and estimate \eqref{eq:stab_g-c} that $\norm{\g_\ind - \g_0}_{\Cc^1(\Omega)} \lesssim \kappa$, thus giving that $\text{diam}_{g_\ind}(\Omega) \leq \text{diam}_{g_0}(\Omega)+\kappa$ for $\kappa$ small enough. As $T > \text{diam}_{g_0}(\Omega)$, we thus know that $\g_\ind$ will intersect with $\p \Omega$ again in $[0,T]$ provided $\kappa$ is small enough, and we denote the corresponding times by $t=t_{\ind,+}$. We also define $\t_k, \, k=0,1,2$ to be the null geodesics in $(M,\bar{g}_\ind), \, \bar{g}_\ind = -\d t^2 + g_\ind$ corresponding to $\g_k$, i.e, $\t_k(t) = (t,\g_k(t))$. We let $(s,z_k')$ be the Fermi coordinates for these null geodesics, and we set $\t_{0}(s_{\pm}) = \t_{0}(t_{\pm})$ and $\t_{\ind}(s_{\ind,\pm}) = \t_{\ind}(t_{\ind,\pm})$ with $s_- = s_{1,-} = s_{2,-}$. Moreover, we recall that $\g_k, \t_k$ are uniquely parameterized by $(p,\xi') \in S_-^* \p \Omega$.

\begin{proof}[Proof Theorem \ref{thm:stabb2}]

We first will prove stable recovery of $c$ from the DN map $\Lambda$ in Step 1. In Step 2, we use this to additionally recover $\b$ stably from the DN map.

\textit{Step 1.} We will denote the first-order DN map for $c=c_\ind$ by $\tilde{\Lambda}_{\ind}$ to avoid clutter with notation, i.e., $\tilde{\Lambda}_{\ind} f = \p_{\nu} v_{\ind}|_{\Sigma}$ with $v_{\ind}$ solving equation \eqref{eq:lin1} with $c=c_{\ind}$.
Let $\Ll_k$ be the scattering relation corresponding to metric $g_k$. We first prove that the scattering relation can be recovered locally from the DN map $\Lambda$, i.e., on a conic neighborhood $U_p$ of some $(p,\xi') \in S_-^* \p \Omega$. Through a similar reasoning as in \cite{stefanov2018inverse}[Theorem 4.3] we then have that 
\begin{align*}
    |(\Ll_1 - \Ll_2)(q,\eta')| \lesssim \delta^{\mu_0},
\end{align*}
for any $(q,\eta') \in U_p$ and for some $0 < \mu_0 < 1$, if
\begin{align} \label{eq:conditions_SY18}
\begin{split}
    \norm{\tilde{\Lambda}_1^* \tilde{\Lambda}_1 - \tilde{\Lambda}_2^* \tilde{\Lambda}_2}_{H^{s+3}(U_p) \rightarrow H^{s-2}(U_p)} &\leq \delta^{\mu_1}, \\
    \norm{\tilde{\Lambda}_1^* y^\mu \tilde{\Lambda}_1 - \tilde{\Lambda}_2^* y^\mu \tilde{\Lambda}_2}_{H^{s+3}(U_p) \rightarrow H^{s-2}(U_p)} &\leq \delta^{\mu_1}, \\
    \norm{\tilde{\Lambda}_1^* \p_{\mu} \tilde{\Lambda}_1 - \tilde{\Lambda}_2^* \p_{\mu} \tilde{\Lambda}_2}_{H^{s+3}(U_p) \rightarrow H^{s-3}(U_p)} &\leq \delta^{\mu_1},
\end{split}
\end{align}
for some $0 < \mu_1 \leq 1$, where $\tilde{\Lambda}_{\ind}^*$ is the $L^2(\Sigma)$-adjoint of $\tilde{\Lambda}_{\ind}$, where $y^\mu$ are the boundary local coordinates near $p$, and where $\p_{\mu}$ denotes $\p_{y^\mu}$. Taking $f_{\eps} := \eps f \in H_{0,\eps_0}^{s+3}(\Sigma)$ to additionally be compactly supported in $U_p$, we get for any such $f$ with $\norm{f}_{H^{s+3}(\Sigma)} = 1$ and any multi-index $\a$ with $|\a| \leq s-2$ that
\begin{align*}
    |\ip{\p_y^\a (\tilde{\Lambda}_1^* y^i \tilde{\Lambda}_1 - \tilde{\Lambda}_2^* y^i \tilde{\Lambda}_2)f_{\eps}}{f_{\eps}}_{L^2(U_p)}|
    &= |\ip{\p_y^\a(\tilde{\Lambda}_1^* y^i \tilde{\Lambda}_1 - \tilde{\Lambda}_2^* y^i \tilde{\Lambda}_2) f_{\eps}}{f_{\eps}}_{L^2(\Sigma)}| \\
    &\leq |\ip{\p_y^\a (y^i (\tilde{\Lambda}_1 - \tilde{\Lambda}_2)(f_{\eps}))}{\p_y^\a (\tilde{\Lambda}_1 f_{\eps})}_{L^2(\Sigma)}| \\
    &\qquad + |\ip{\p_y^\a (y^i \tilde{\Lambda}_2 f_{\eps})}{\p_y^\a((\tilde{\Lambda}_1 - \tilde{\Lambda}_2)(f_{\eps}))}_{L^2(\Sigma)}| \\
    &\leq (\norm{\p_{\nu} v_1|_{\Sigma}}_{H^{s-2}(\Sigma)} + \norm{\p_{\nu} v_2|_{\Sigma}}_{H^{s-2}(\Sigma)}) \norm{(\tilde{\Lambda}_1 - \tilde{\Lambda}_2)(f_{\eps})}_{H^{s-2}(\Sigma)} \\
    &\lesssim \norm{f}_{H^{s-1}(\Sigma)} \left(\eps^{-1} \delta + \eps \norm{f}_{H^{s+3}(\Sigma)}^2 \right) \\
    &\lesssim \eps^{-1} \delta + \eps,
\end{align*}
and similarly 
\begin{align*}
    |\ip{\p_y^\a((\Lambda_1^* \Lambda_1 - \Lambda_2^* \Lambda_2)f_{\eps})}{f_{\eps}}_{L^2(U_p)}| \lesssim \eps^{-1} \delta + \eps,
\end{align*}
where we used Corollary \ref{cor:estimate1} and Lemma \ref{lem:DN12}.
From the same bounding process, we also get that
\begin{align*}
    |\ip{\p_y^\a (\tilde{\Lambda}_1^* \p_{\mu} \tilde{\Lambda}_1 - \tilde{\Lambda}_2^* \p_{\mu} \tilde{\Lambda}_2)f_{\eps}}{f_{\eps}}_{L^2(U_p)}|
    &= |\ip{\p_y^\a(\tilde{\Lambda}_1^* \p_{\mu} \tilde{\Lambda}_1 - \tilde{\Lambda}_2^* \p_{\mu} \tilde{\Lambda}_2) f_{\eps}}{f_{\eps}}_{L^2(\Sigma)}| \\
    &\leq |\ip{\p_y^\a \p_{\mu} ((\tilde{\Lambda}_1 - \tilde{\Lambda}_2)(f_{\eps}))}{\p_y^\a (\tilde{\Lambda}_1 f_{\eps})}_{L^2(\Sigma)}| \\
    &\qquad + |\ip{\p_y^\a \p_{\mu} (\tilde{\Lambda}_2 f_{\eps})}{\p_y^\a((\tilde{\Lambda}_1 - \tilde{\Lambda}_2)(f_{\eps}))}_{L^2(\Sigma)}| \\
    &\leq (\norm{\p_{\nu} v_1|_{\Sigma}}_{H^{s-3}(\Sigma)} + \norm{\p_{\nu} v_2|_{\Sigma}}_{H^{s-2}(\Sigma)}) \norm{(\tilde{\Lambda}_1 - \tilde{\Lambda}_2)(f_{\eps})}_{H^{s-2}(\Sigma)} \\
    &\lesssim \norm{f}_{H^{s-1}(\Sigma)} \left(\eps^{-1} \delta + \eps \norm{f}_{H^{s+3}(\Sigma)}^2 \right) \\
    &\lesssim \eps^{-1} \delta + \eps,
\end{align*}
for any $f_{\eps} \in H_{0,\eps_0}^{s+3}(\Sigma)$ compactly supported in $U_p$ with $\norm{f}_{H^{s+3}(\Sigma)} = 1$ and any multi-index $\a$ with $|\a| \leq s-3$. Setting $\eps = \delta^{\frac{1}{2}}$, which ensures that $f_{\eps} \in H_{0,\eps_0}^{s+3}(\Sigma)$ by choosing $\delta$ sufficiently small, we thus get that conditions \eqref{eq:conditions_SY18} hold with $\mu_1 = \frac{1}{2}$, and thus that
\begin{align*}
    |(\Ll_1 - \Ll_2)(q,\eta')| \lesssim \delta^{\mu_0},
\end{align*}
for any $(q,\eta') \in U_p$. Since $\p \Omega$ is compact, the result can be extended to a global result, i.e.,
\begin{align*}
    \norm{\text{dist}(\Ll_1,\Ll_2)}_{\Cc(S_-^* \p \Omega)} \lesssim \delta^{\mu_0}
\end{align*}
for some $0 < \mu_0 < 1$, where $\norm{\text{dist}(\Ll_1,\Ll_2)}_{\Cc(S_-^* \p \Omega)}$ denotes maximum over all local coordinate charts of $\sup_{(q,\eta') \in U_p} |(\Ll_1 - \Ll_2)(q,\eta')|$.

From the knowledge of the global scattering relation, $c$ can then be stably recovered in $\Omega$ under the foliation condition as proven in \cite{stefanov2016boundary}[Theorem 5.2], giving that
\begin{align*}
    \norm{c_1 - c_2}_{\Cc^{2}(\Omega)} \lesssim \norm{\Ll_1 - \Ll_2}_{\Cc(D)}^{\mu_2}
\end{align*}
for some $0 < \mu_2 < 1$ dependent on $k$, with $D \subset S_-^* \p \Omega$ corresponding to the geodesics in $\Gg$ considered under the foliation condition as discussed in Section \ref{sec:preliminaries}. Hence, we get 
\begin{align} \label{eq:c_delta}
    \norm{c_1 - c_2}_{\Cc^{2}(\Omega)} \lesssim \delta^{\mu}
\end{align}
for some $0 < \mu < 1$ dependent on $k$.

\textit{Step 2.}
Let $\Jj_{k}$ denote the Jacobi weighted ray transform in the metric $g_k$ along $\theta_k$, i.e., $\Jj_{k} \b = \Jj_{c_k,p,\xi'} \b$. Clearly
\begin{align*}
    \Jj_\ind \b_\ind - \Jj_0 \b_0 = (\Jj_\ind - \Jj_0) \b_\ind + \Jj_0 (\b_\ind - \b_0),
\end{align*}
where $\b=\b_0$ corresponds to $c=c_0$. We will first show that $|\Jj_0(\b_\ind - \b_0)| \lesssim \delta^{\mu}$ for some $0 < \mu < 1$ by proving that $|\Jj_\ind \b_\ind - \Jj_0 \b_0| \lesssim \delta^{\mu_1}$ and $|(\Jj_\ind - \Jj_0) \b_\ind| \lesssim \delta^{\mu_2}$ for some $0 < \mu_1, \mu_2 < 1$.

We start by proving that $|\Jj_\ind \b_\ind - \Jj_0 \b_0| \lesssim \delta^{\mu_1}$. From equation \eqref{eq:wv_integral_identity}, we get that
\begin{align*}
    |\Jj_\ind \b_\ind - \Jj_0 \b_0| \leq \pp^{-1/2} \Bigg| \int_{\M} \b_\ind \p_t v_\ind^2 \p_t \v_\ind - \b_0 \p_t v_0^2 \p_t \v_0 \, \d x \d t \Bigg| + \Oo(\pp^{-1} + \qq),
\end{align*}
where $v_k, \v_k$ are the solutions to respectively equations \eqref{eq:lin1}, \eqref{eq:lin0} with $c=c_k$ for some $f=f_k, h=h_k$. Taking the difference of equation \eqref{eq:wv_alessandrini} for $c=c_1,c_2$ gives
\begin{align*}
    \int_{\M} \b_\ind \p_t v_\ind^2 \p_t \v_\ind - \b_0 \p_t v_0^2 \p_t \v_0 \, \d x \d t = I_1 + I_2 + I_3,
\end{align*}
where
\begin{align*}
    I_1 &:= \langle (\p_{\eps}^2 \Lambda_\ind - \p_{\eps}^2 \Lambda_0) (\eps f_\ind), \ff_\ind \rangle_{L^2(\Sigma)}, \\
    I_2 &:= \langle \p_{\eps}^2 \Lambda_0 (\eps f_\ind) - \p_{\eps}^2 \Lambda_0 (\eps f_0), \ff_\ind \rangle_{L^2(\Sigma)}, \\
    I_3 &:= \langle \p_{\eps}^2 \Lambda_0 (\eps f_0), (\ff_\ind - \ff_0) \rangle_{L^2(\Sigma)}.
\end{align*}
We set $v_k, \v_k, f_k, h_k$ as in equations \eqref{eq:GBu12}, \eqref{eq:GBv} with $c=c_k$ and with $N = (2,0,0)$, where we recall that $N=(2,0,0)$ guarantees convergence of the approximate Gaussian beams as $\pp \to \infty$, $\qq \to 0$.
Moreover, we denote $u_{k,i}, \, i=0,1,2$ as the solution to equation \eqref{eq:wv} with boundary profile $\eps f_i$ for $c=c_k, \, \b = \b_k$, where we recall from Lemma \ref{prop:hol} that $u_{k,i} = \eps v_{k,i} + \eps^2 w_{k,i} + \Rr_{k,i}$.

Regarding the first term, we get from Lemma \ref{lem:DN12} and Lemma \ref{lem:gb_est} that
\begin{align*}
    |I_1| &\leq \norm{\ff_\ind}_{H^{-(s-2)}(\Sigma)} \norm{(\p_{\eps}^2 \Lambda_\ind - \p_{\eps}^2 \Lambda_0) (\eps f_\ind)}_{H^{s-2}(\Sigma)} \\
    &\lesssim \norm{\ff_\ind}_{\Cc(\Sigma)} \left( \eps^{-2} \delta + \eps \norm{f_\ind}_{H^{s+3}(\Sigma)}^3 \right) \\
    &\lesssim \eps^{-2} \delta + \eps \pp^{\frac{12s+33}{4}}.
\end{align*}
Regarding the second term, we first note that
\begin{align*}
    \norm{\p_{\nu} (w_{0,\ind} - w_{0,0})|_{\Sigma}}_{L^2(\Sigma)} 
    &\lesssim \norm{\p_{\nu}(\b \p_t^2 (v_{0,\ind}^2 - v_{0,0}^2))|_{\Sigma}}_{L^1([0,T];H^{-1}(\Omega))} \\
    &\lesssim \norm{\b(v_{0,\ind}^2 - v_{0,0}^2)}_{\Cc^1(M)} \\
    &\lesssim \norm{\b}_{\Cc^1(\Omega)} \norm{v_{0,\ind} + v_{0,0}}_{\Cc^1(M)} \norm{v_{0,\ind} - v_{0,0}}_{\Cc^1(M)} \\
    &\lesssim \pp^2 \norm{v_{0,\ind} + v_{0,0}}_{\Cc^1(M)} \norm{c_\ind - c_0}_{\Cc^2(\Omega)}
\end{align*}
from Corollary \ref{cor:estimate1} and Lemma \ref{lem:gb_est}, and therefore we obtain from estimate \eqref{eq:c_delta} that
\begin{align*}
    |I_2| &\leq \norm{h_\ind}_{L^{2}(\Sigma)} \norm{\p_{\eps}^2 \Lambda_0 (\eps f_\ind) - \p_{\eps}^2 \Lambda_0 (\eps f_0)}_{L^{2}(\Sigma)} \\
    &\leq \norm{h_\ind}_{\Cc(M)} \norm{\p_{\nu} (w_{0,\ind} - w_{0,0})|_{\Sigma}}_{L^2(\Sigma)}  \\
    &\lesssim \pp^3 \delta^{\mu}. 
\end{align*}
Regarding the third term, we get from Lemma \ref{prop:hol}, \ref{lem:stab_gb}, and estimate \eqref{eq:c_delta} that
\begin{align*}
    |I_3| &\leq \norm{h_\ind - h_0}_{H^{-(s-2)}(\Sigma)} \norm{\p_{\eps}^2 \Lambda_0 (\eps f_0)}_{H^{s-2}(\Sigma)} \\
    &\lesssim \norm{\eta_\ind - \eta_0}_{\Cc^1(M)} \norm{f_0}_{H^{s+3}(\Sigma)}^2  \\
    &\lesssim \pp^{\frac{4s+15}{2}} \delta^{\mu}.
\end{align*}
From the derived bounds, and additionally setting $\qq = \pp^{-1}$, we get that
\begin{align}
    |\Jj_\ind \b_\ind - \Jj_0 \b_0| 
    \lesssim \pp^{-\frac{1}{2}} \eps^{-2} \delta + \pp^{2s+7} \delta^{\mu} + \eps \pp^{\frac{12s+31}{4}} + \pp^{-1}. \label{eq:to_bound}
\end{align}
As in the proof of Proposition \ref{thm:stabb1}, we need to bound the right-hand side and $\eps \norm{f_k}_{H^{s+3}(\Sigma)}$ in terms of $\delta$. We use the same method to do so as in the proof of Proposition \ref{thm:stabb1}, but since we do not aim for optimal bounds, and since we note that the right-hand side of equation \eqref{eq:to_bound} is bounded by
\begin{align*}
    \lambda(\pp,\eps) = \pp^{2s+7} \eps^{-2} \delta^{\mu} + \eps \pp^{\frac{12s+31}{4}} + \pp^{-1},
\end{align*}
for $\delta,\eps \leq 1, \, \pp \geq 1$, we instead bound this function in terms of $\delta$, where we remark that we can assume $\delta \leq 1$ without loss of generality. The minimizers of $\lambda$ are given by
\begin{align*}
    \pp = C_s \delta^{-\mu\left(\frac{2}{16s+51}\right)}, \quad \eps = \tilde{C}_s \delta^{\mu \left(\frac{16s+45}{48s+153}\right)},
\end{align*}
with $C_s,\tilde{C}_s > 0$ some constants dependent on $s$ only. 
From these minimizers and Lemma \ref{lem:gb_est}, we get that $\eps \norm{f_k}_{H^{s+3}(\Sigma)} \lesssim \delta^{\mu \left(\frac{20 s + 57}{96s+306}\right)}$, and that each term in the right-hand side of equation \eqref{eq:to_bound} is bounded by $\delta^{\mu\left(\frac{2}{16s+51}\right)}$, i.e.,
\begin{align*}
    |\Jj_\ind \b_\ind - \Jj_0 \b_0| \lesssim \delta^{\mu\left(\frac{2}{16s+51}\right)}.
\end{align*}

To prove the estimate $|(\Jj_\ind - \Jj_0) \b_\ind| \lesssim \delta^{\mu_2}$, we first consider the case $s_{0,+} \geq s_{\ind,+}$ and let $(\Jj_\ind - \Jj_0) \b_\ind = \tilde{I}_1 - \tilde{I}_2$, where 
\begin{align*}
    \tilde{I}_1 &:= \int_{s_-}^{s_{\ind,+}} \b_\ind(\t_\ind(s)) c_{\ind}^{\frac{3}{2}}(\t_\ind(s)) (\det Y_\ind(\zz))^{-\frac{1}{2}} - \b_\ind(\t_0(s)) c_0^{\frac{3}{2}}(\t_0(s)) (\det Y_0(\zz))^{-\frac{1}{2}} \, \d \zz, \\
    \tilde{I}_2 &:= \int_{s_{\ind,+}}^{s_{0,+}} \b_\ind(\t_0(s)) c_0^{\frac{3}{2}}(\t_0(s)) (\det Y_0(\zz))^{-\frac{1}{2}} \, \d \zz.
\end{align*}
Since $\b_{\ind} \in \Cc^{\infty}(\Omega), \, \norm{\b_{\ind}}_{\Cc^l(\Omega)} \leq C_\b$, we get from Corollary \ref{prop:stab_geod_flow} that 
\begin{align*}
    \norm{\b_\ind(\t_\ind(s)) - \b_\ind(\t_0(s))}_{\Cc([s_-,s_{1,+}])} \lesssim \norm{\t_\ind - \t_0}_{\Cc([s_-,s_{1,+}])} \lesssim \norm{c_\ind - c_0}_{\Cc^{2}(\Omega)}.
\end{align*}
This estimate together with estimate \eqref{eq:c_delta}, Lemma \ref{lem:Y}, and the a priori bound on the sound speed gives that
\begin{align*}
    |\tilde{I}_1|
    &\lesssim \norm{\b_\ind(\t_\ind(s)) - \b_\ind(\t_0(s))}_{\Cc([s_-,s_{1,+}])} + \norm{c_\ind(\t_\ind(s)) - c_0(\t_0(s))}_{\Cc([s_-,s_{1,+}])} \\
    &\qquad + \norm{\det Y_\ind(s) - \det Y_0(s)}_{\Cc([s_-,s_{1,+}])} \\
    &\lesssim \norm{c_\ind - c_0}_{\Cc^2(\Omega)} \\
    &\lesssim \delta^{\mu}.
\end{align*}
Again using the a priori boundedness of $\b_\ind, c_0, \det Y_0$, we get from the continuity of Fermi coordinates with respect to $c$ as shown in the proof of Lemma \ref{lem:stab_gb} combined with estimate \eqref{eq:c_delta} that
\begin{align*}
    |\tilde{I}_2| \lesssim |s_{\ind,+} - s_{+}| \lesssim \delta^{\mu}.
\end{align*}
A similar bounding process for $\tilde{I}_1,\tilde{I}_2$ applies in case $s_{0,+} \leq s_{\ind,+}$. Combining the estimates for $\tilde{I}_1, \tilde{I}_2$ gives that $|(\Jj_\ind-\Jj_0)\b_\ind| \lesssim \delta^{\mu}$.

From the above, we thus get that $|\Jj_0(\b_\ind - \b_0)| \lesssim \delta^{\tilde{\mu}}$ for some $0 < \tilde{\mu} < 1$ dependent on $k,s$. Since $(p,\xi') \in S_-^* \p \Omega$ was arbitrary, we thus obtain that
\begin{align*}
    \norm{\Jj_0(\b_\ind - \b_0)}_{\Cc(S_-^* \p \Omega)} \lesssim \delta^{\tilde{\mu}}.
\end{align*} 
Since $(\Omega,g_0)$ satisfies the foliation condition, we can invert $\Jj_0(\b_\ind - \b_0)$ through the procedure described in Section \ref{sec:preliminaries}, with estimate \eqref{eq:stab_wgrt} giving that $\norm{\b_\ind - \b_0}_{H_{\mathsf{F}}^{-1}(\Omega)} \lesssim \delta^{\tilde{\mu}}$. Hence, we have
\begin{align*}
    \norm{\b_1 - \b_2}_{H_{\mathsf{F}}^{-1}(\Omega)} \lesssim \delta^{\tilde{\mu}}.
\end{align*}
The same Sobolev interpolation argument as in the proof of Proposition \ref{thm:stabb1} concludes the proof.
\end{proof}

\begin{remark}
We note that we recover $c$ in Step 1 by first recovering the scattering relation from the (first-order) DN map rather than from the DN map directly. This is due to the fact that the scattering relation depends continuously on $c$, whereas the DN map itself might not, see, e.g., \cite{bao2014sensitivity}.
\end{remark}

\section{Numerical experiments}\label{sec:numerical_experiments}

We will numerically investigate the stable recovery of $\b$ and $c$ from the DN map in light of the main result of this work. To this end, we simulate equation \eqref{eq:wv} forward in time following the method of \cite{kaltenbacher2023simultaneous}, i.e., we rewrite equation \eqref{eq:wv} as
\begin{align*} 
    \begin{dcases}
        c^{-2}(1 - 2 \b u) \p_t u - \Delta \int_0^t u(r,x) \d r = 0, &\text{ in } \M, \\
        u = f, &\text{ on } \Sigma, \\
        u = \p_t u \equiv 0, &\text{ on } \M_0,
    \end{dcases}
\end{align*}
and we solve this equation numerically using a Crank-Nicolson time integration scheme. We consider the spatial domain $\Omega = [-1,1]^3$ (technically, a smoothened version of this domain to enforce that $\p \Omega$ is smooth), and we uniformly discretize in time and space, with time step $\Delta t$ and mesh size $\Delta x$, respectively. Moreover, we set $T>\text{diam}_{g}(\Omega)$.

The upper bound $\delta$ as defined in inequality \eqref{eq:DNdelta} cannot be numerically approximated. Instead, we will examine the behavior of $\norm{\p_\nu(u_1 - u_2)|_{\Sigma}}_{H^{s-2}(\Sigma)}$ for multiple boundary profiles $f \in H_{0,\eps_0}^{s+3}(\Sigma), \, s > \frac{9}{2}$, where $u_\ind, \, \ind=1,2$ is the solution to equation \eqref{eq:wv} with $c=c_\ind, \, \b = \b_\ind$ and where we denote the corresponding numerical solution by $\tilde{u}_\ind$. By applying a second-order finite difference scheme to $\tilde{u}$ at $\p \Omega$, we approximate $\p_\nu u|_{\Sigma}$ at each time step, and we denote this approximation by $D_{\nu} \tilde{u}$. Since the value for $\epsf > 0$ that guarantees well-posedness in the cases considered is unknown, we scale the boundary profiles until $\norm{D_{\nu} (\tilde{u}_1 - \tilde{u}_2)}_{H^{s-2}(\Sigma)}$ remains numerically stable. To reduce the influence of discretization errors caused by taking derivatives numerically, we will only examine $\norm{D_{\nu} (\tilde{u}_1 - \tilde{u}_2)}_{L^2(\Sigma)}$.

We will consider boundary profiles that are smooth, as this is desirable from a numerical point of view. We limit the results shown to those for $f(t,x;C) = C e^{-t^{-2}}$, with $C$ a scaling constant to enforce well-posedness. We note that the results for other boundary profiles tested are qualitatively similar, provided these do not exhibit large jumps with respect to the grid. We also note that for other boundary profiles tested, we only enforced the compatibility conditions up to a certain order.

\begin{figure}[h]
    \centering
    
    \begin{subfigure}[b]{0.495\textwidth}
        \centering
        \includegraphics[width=\textwidth]{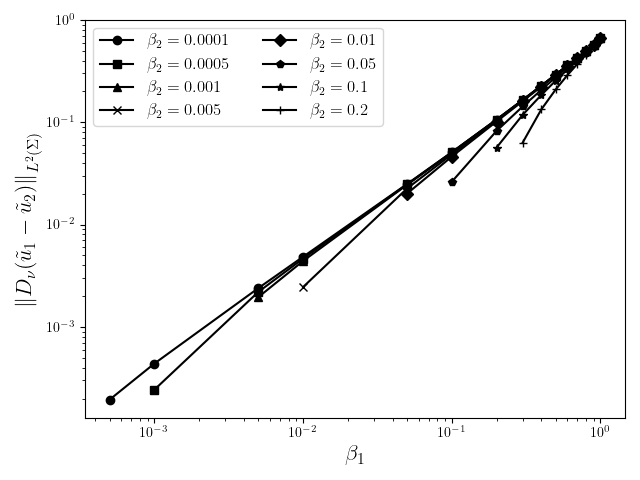}
    \end{subfigure}
    \begin{subfigure}[b]{0.495\textwidth}
        \centering
        \includegraphics[width=\textwidth]{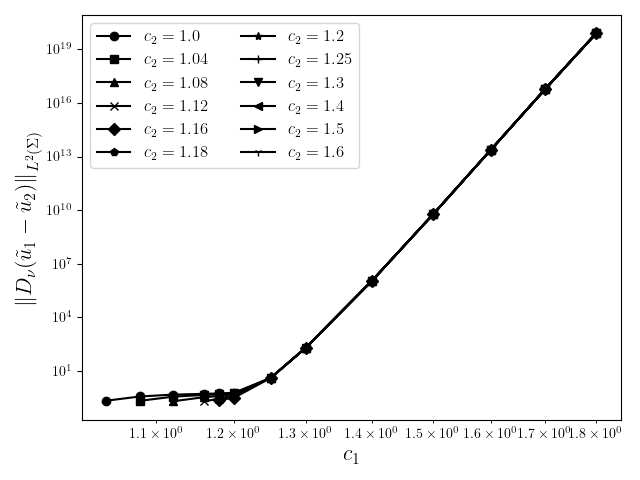}
    \end{subfigure}

    \caption{A log-log plot of $\norm{D_{\nu} (\tilde{u}_1 - \tilde{u}_2)}_{L^2(\Sigma)}$ with $f(t,x) = \frac{1}{10} e^{-t^{-2}}$ for $\b_1,\b_2 \in [10^{-4},1], \, c \equiv 1$ (left) and for $c_1,c_2 \in [1,1.8], \, \b \equiv 0$ (right), where $\Omega = [-1,1]^3, \, \Sigma = [0,3.48] \times \p \Omega, \, \Delta x = 2^{-4}, \, \Delta t = 0.03$. The results are restricted to limited values for $\b_2,c_2$ for clarity of presentation.}
    \label{fig:results_beta_c}
\end{figure}

First, we investigate the behavior of $\norm{D_{\nu} (\tilde{u}_1 - \tilde{u}_2)}_{L^2(\Sigma)}$ where we vary $\b_\ind$ and keep $c$ fixed, with $\b_\ind,c$ set to be constant for simplicity. We set $c \equiv 1$, $\Delta x = 2^{-4}$, $\Delta t=0.03$, $T = 3.48$, $C=\frac{1}{10}$, and we note that $T > \text{diam}_{g}(\Omega)$ is satisfied in this way. The results are shown in figure \ref{fig:results_beta_c} for $\b_\ind$ in the range $[10^{-4},1]$, where we only show the results for limited values $\b_2$ for clarity of presentation, and where the results are shown on a log-log plot to analyze stability. We clearly observe H\"older stability in $\b$ in this range. The H\"older stability coefficient $\mu$ seems to be dependent on $\b_1$, but this is likely to be a numerical artifact as we observe that this dependence decreases as $|\b_1 - \b_2|$ gets larger. We also note that the range in $\b$ in which H\"older stability is observed depends on the boundary profile and the corresponding scaling constant $C$. For example, stability breaks down for $|\b_1 - \b_2| > 1.1$, whereas it already breaks down for $|\b_1 - \b_2| > 0.1$ if $C = 1$. This corresponds to the requirement that $f \in H_0^{s+1}(\Sigma)$ needs to be small enough for the stability of the forward problem. These observations are also made for other $f,c$ examined.

Next, we investigate the behavior of $\norm{D_{\nu} (\tilde{u}_1 - \tilde{u}_2)}_{L^2(\Sigma)}$ while varying $c_\ind$ and keeping $\b$ fixed. We again take $c_\ind,\b$ to be constants, and set $\Delta x = 2^{-4}$, $\Delta t=0.03$, $T = 3.48$, $C=\frac{1}{10}$ for consistency with the previous study regarding stability with respect to $\b$. Moreover, for the current study we first investigate $\b \equiv 0$, as this relieves us from the ill-posedness of equation \eqref{eq:wv}, and afterwards we will investigate the case $\b > 0$. The results are shown in figure \ref{fig:results_beta_c} for $c$ in the range $[1,1.8]$, for which we note that $T>\text{diam}_{g}(\Omega)$ is satisfied, and we again limit the presented values for $c_2$ for clarity of presentation and present the results in a log-log plot to analyze stability. We observe H\"older stability in $c$ apart from for $c_1$ small, which again could be caused by numerical errors dominating in this regime as this behavior disappears when $|c_1 - c_2|$ grows large. For the case $\b > 0$, the earlier observations made for $\b \equiv 0$ still hold, only in this case the range in $c$ is limited based on the values of $\b$ and $C$, which is expected as well-posedness of equation \eqref{eq:wv} needs to be guaranteed. 

\begin{figure}[h]
    \centering
    
    \begin{subfigure}[b]{0.495\textwidth}
        \centering
        \includegraphics[width=\textwidth]{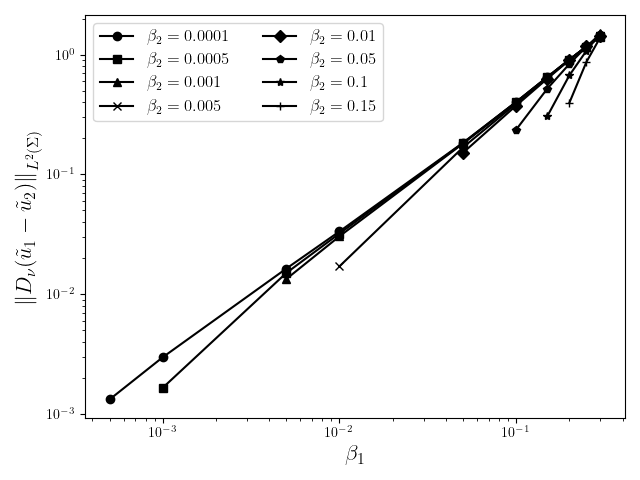}
    \end{subfigure}
    \begin{subfigure}[b]{0.495\textwidth}
        \centering
        \includegraphics[width=\textwidth]{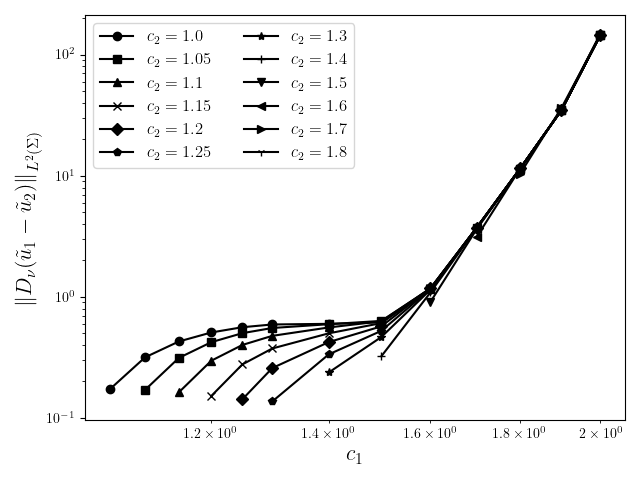}
    \end{subfigure}
    
    \caption{A log-log plot of $\norm{D_{\nu} (\tilde{u}_1 - \tilde{u}_2)}_{L^2(\Sigma)}$ with $f(t,x) = \frac{1}{10} e^{-t^{-2}}$ and $c_{\a}(x)$ as in equation \eqref{eq:c_herglotz} for $\b_1,\b_2 \in [10^{-4},0.34], \, \a = \frac{3}{2}$ (left) and for $\a_1,\a_2 \in [1,\frac{3}{2}], \, \b \equiv 0$ (right), where $\Omega = [-1,1]^3, \, \Sigma = [0,3.48] \times \p \Omega, \, \Delta x = 2^{-4}, \, \Delta t = 0.03$. The results are restricted to limited values for $\b_2,\a_2$ for clarity of presentation.}
    \label{fig:results_fol}
\end{figure}

Finally, we also numerically examine stability in the case of an isotropic radial sound speed $c(r)$ satisfying the Herglotz condition $\p_r(r c^{-1}(r)) > 0$, which allows for caustics but still satisfies the foliation condition. We set
\begin{align} \label{eq:c_herglotz}
    c_{\a}(x) = \frac{\a \sqrt{3}}{(\a - 1) \norm{x}_{2} + \sqrt{3}},
\end{align}
with $\norm{x}_{2} := \sqrt{\ip{x}{x}}$ and $\a \geq 1$ the sound speed at $x=0$, and we observe that $c$ clearly satisfies the Herglotz condition if $\a \geq 1$ and becomes the uniform sound speed $c \equiv 1$ for $\a = 1$. We again take $\Delta x = 2^{-4}$, $\Delta t=0.03$, $T = 3.48, C=\tfrac{1}{10}$ for consistency with the previous studies, where we note that $T > \text{diam}_{g}(\Omega)$ is satisfied. We examine stability in $\b$ for $\a = \frac{3}{2}$ in the range $\b \in [10^{-4},0.34]$, and similarly we examine stability in $c$ for $\b \equiv 0$ in the range $\a \in [1,\frac{3}{2}]$. The results are shown in figure \ref{fig:results_fol}, again on a log-log plot and for limited values for $\b_2,\a_2$. Again, we clearly observe H\"older stability in $\b$ and $c$ apart from seemingly numerical artifacts dependent on $\b_1,\a_1$ as discussed earlier. Moreover, the recovery of $c$ is still H\"older stable if $\b > 0$, but the range of values for $\a$ that guarantees stability decreases.


\clearpage
\appendix

\section{Proof Lemma \ref{lem:ODE_est}} \label{ap:proof_ODE}

First, recall from Picard-Lindel\"of that there exists a unique solution
\begin{align} \label{eq:ODE_sol} 
    x(t) = x(t_-) + \int_{t_-}^{t} f(s,x(s);\a(x(s))) \, \d s,
\end{align}
with $x \in \Cc^1([t_-,t_+])$. Let $f_\ind := f(t,x_\ind(t),\a_\ind(x_\ind(t))), \, \ind=1,2$ and $X := [t_-,t_+] \times \Omega \times \RR^m$. Since $f$ is globally Lipschitz in $x$ and $\a$, there exist $k_0, k_\a > 0$ such that
\begin{align} \label{eq:cond_lemma1}
    \norm{f_1(t,x_1,\a_1) - f_2(t,x_2,\a_2)} \leq k_0 \norm{x_1 - x_2} + k_\a \norm{\a_1 - \a_2}_{L^{\infty}(\Omega)},
\end{align}
for any $t \in [t_-,t_+]$, $x_\ind \in \Omega$, $\a_\ind \in L^{\infty}(\Omega)$. Letting $d(t) := \norm{x_1(t) - x_2(t)}$, we thus get from equation \eqref{eq:ODE_sol} and inequality \eqref{eq:cond_lemma1} that
\begin{align*}
    d(t) \leq d(t_-) + k_0 \int_{t_-}^t d(s) \, \d s + k_\a (t-t_-) \norm{\a_1 - \a_2}_{L^\infty(\Omega)}.
\end{align*}
Estimate \eqref{eq:est_ODE_lipschitz} then follows from Gr\"onwall's inequality, which gives that
\begin{align*}
    d(t) \leq d(t_-) e^{k_0(t-t_-)} + \frac{k_\a}{k_0} \left(e^{k_0 (t-t_-)} - 1 \right) \norm{\a_1 - \a_2}_{L^\infty(\Omega)}.
\end{align*}

For $k \geq 1$, we immediately observe from equation \eqref{eq:ODE_sol} that $x \in \Cc^k([t_-,t_+])$. We first prove estimate \eqref{eq:est_ODE_k} for $k=1$. From estimate \eqref{eq:est_ODE_lipschitz} and the additional assumptions posed for $k=1$, we get that
\begin{align*}
    \norm{x_1 - x_2}_{\Cc([t_-,t_+])} + \norm{\dot{x}_1 - \dot{x}_2}_{\Cc([t_-,t_+])} &= \norm{x_1 - x_2}_{\Cc([t_-,t_+])} + \norm{f_1 - f_2}_{\Cc(X)} \\
    &\lesssim \norm{x_1 - x_2}_{\Cc([t_-,t_+])} + \norm{\a_1 - \a_2}_{\Cc(\Omega)} \\
    &\lesssim d(t_-) + \norm{\a_1 - \a_2}_{\Cc(\Omega)}
\end{align*}
proving the $k=1$ case. 
The proof for $k \geq 2$ follows by induction, and we will illustrate the induction step by giving the proof for $k=2$. Since $\ddot{x} = \p_t f + \dot{x} \p_x f + (\dot{x} \p_x \a) \p_\a f$, we get that
\begin{align*}
    \norm{\ddot{x}_1 - \ddot{x}_2}_{\Cc([t_-,t_+])} 
    &\lesssim \norm{f_1 - f_2}_{\Cc^1(X)} + \norm{x_1 - x_2}_{\Cc^1([t_-,t_+])} + \norm{\a_1 - \a_2}_{\Cc^1(\Omega)} \\
    &\lesssim \norm{x_1 - x_2}_{\Cc^1([t_-,t_+])} + \norm{\a_1 - \a_2}_{\Cc^1(\Omega)} \\
    &\lesssim d(t_-) + \norm{\a_1 - \a_2}_{\Cc^1(\Omega)},
\end{align*}
from the estimate for $k=1$ and the additional assumptions posed for $k=2$.

\end{document}